\setlist{leftmargin=*}
\setlist[enumerate]{label=\textup{(\arabic*)}}
\numberwithin{equation}{section}
\numberwithin{figure}{section}
\theoremstyle{plain}
\newtheorem{theorem}{Theorem}[section]
\newtheorem{lemma}[theorem]{Lemma}
\newtheorem{conjecture}[theorem]{Conjecture}
\newtheorem{corollary}[theorem]{Corollary}
\newtheorem{proposition}[theorem]{Proposition}
\newtheorem{claim}[theorem]{Claim}
\theoremstyle{definition}
\newtheorem{definition}[theorem]{Definition}
\newtheorem{remark}[theorem]{Remark}
\newtheorem{example}[theorem]{Example}
\newcommand{\mdim}{\mathrm{mdim}}
\newcommand{\diam}{\mathrm{diam}}
\newcommand{\widim}{\mathrm{Widim}}
\newcommand{\dist}{\mathrm{dist}}
\newcommand{\supp}{\mathrm{supp}}
\newcommand{\norm}[1]{\left|\!\left|#1\right|\!\right|}
\newcommand{\vol}{\mathrm{vol}}
\newcommand{\ocap}{\mathrm{ocap}}
\newcommand{\mesh}{\mathrm{mesh}}
\newcommand{\mmdim}{\mathrm{mdim}_{\mathrm{M}}}
\newcommand{\Z}{\mathbb{Z}}
\newcommand{\R}{\mathbb{R}}
\newcommand{\safenopunct}{\texorpdfstring{\nopunct}{}}
\begin{document}

\title[Mean dimension of $\mathbb{Z}^k$-actions]{Mean dimension of $\mathbb{Z}^k$-actions}

\author{Yonatan Gutman, Elon Lindenstrauss, Masaki Tsukamoto}

\subjclass[2010]{37B40, 54F45}

\keywords{Mean dimension, topological entropy, metric mean dimension, Voronoi tiling}

\date{\today}

\begin{abstract}
Mean dimension is a topological invariant for dynamical systems that is meaningful for systems with infinite dimension and infinite entropy.
Given a $\mathbb{Z}^k$-action on a compact metric space $X$, we study the
following three problems closely related to mean dimension.
\begin{enumerate}
  \item When is $X$ isomorphic to the inverse limit of finite entropy systems?
  \item Suppose the topological entropy $h_{\mathrm{top}}(X)$ is infinite.
       How much topological entropy can be detected if one considers $X$ only up to a given level of accuracy? How fast does this amount of entropy grow as the level of resolution becomes finer and finer?
  \item When can we embed $X$ into the $\mathbb{Z}^k$-shift on the infinite dimensional cube
        $([0,1]^D)^{\mathbb{Z}^k}$?
\end{enumerate}
These were investigated for $\mathbb{Z}$-actions in [Lindenstrauss, Mean dimension, small entropy factors and an embedding theorem,
Inst. Hautes \'{E}tudes Sci. Publ. Math. \textbf{89} (1999) 227-262],
but the generalization to $\mathbb{Z}^k$ remained  an open problem.
When $X$ has the marker property, in particular when $X$ has a completely aperiodic minimal factor, we completely solve (1) and a natural interpretation of (2), and give a reasonably satisfactory
answer to (3).

A key ingredient is a new method to continuously partition every orbit into good pieces.
\end{abstract}
\thanks{Y.G was partially supported by the Marie Curie grant PCIG12-GA-2012-334564
and by the National Science Center (Poland) grant 2013/08/A/ST1/00275. E.L. awknowledges the support of ERC AdG Grant 267259.
M.T. was supported by Grant-in-Aid for Young Scientists (B)
25870334 from JSPS}
\maketitle

\section{Introduction} \label{section: introduction}

\subsection{Main results} \label{subsection: main results}

Mean dimension is a topological invariant of dynamical systems introduced by
Gromov \cite{Gromov}.
Just like topological entropy measures the number of bits per second to describe a point in a system,
mean dimension measures the number of parameters per second.
A basic example is the shift action on the Hilbert cube $[0,1]^{\mathbb{Z}}$, whose
mean dimension is 1.
This system has infinite dimension and infinite topological entropy;
mean dimension, however, provides a useful numerical invariant for such large dynamical systems.
Soon after the introduction of mean dimension, some basic properties of mean dimension, in particular its relation to toplogical entropy and to embedding questions were studied by Benjamin Weiss and the second named author (\cite{Lindenstrauss--Weiss, Lindenstrauss}).

Both \cite{Lindenstrauss--Weiss, Lindenstrauss} focus on the case of $\Z$-actions. However, there is a substantial difference between these two papers in this respect:
The paper \cite{Lindenstrauss--Weiss} studied the basic theory of mean dimension, and the restriction to $\Z$-action in this paper was purely because of expositional reasons. As explained in that paper,
all its main results can be generalized to the
actions of discrete amenable groups without any essential change given a result due to Ornstein and Weiss on subadditive functions on amenable groups (cf.\ Lemma~\ref{lemma: Ornstein--Weiss} below) that is given in \cite[Appendix]{Lindenstrauss--Weiss} (this lemma is implicit in Ornstein--Weiss \cite[Ch.~I, Sec.~2 and 3]{Ornstein--Weiss} and written explicitly for similar purposes also in Gromov \cite[p. 336]{Gromov}).
Recently Hanfeng Li \cite{Li} successfully generalized theses results even to the much larger class of sofic groups.

The paper \cite{Lindenstrauss} studied more delicate questions and in that paper specific properties of $\mathbb{Z}$ were used. How to generalize these results to $\Z ^ k$-actions was one of the questions left open in \cite{Lindenstrauss--Weiss, Lindenstrauss}, and it is precisely this question which we are finally able to address in this paper.

Our motivation to develop the generalization is two-fold.
The first is a purely theory motivated: There is a tradition in ergodic theory and dynamical systems to consider actions of more general group. A notable example in this vein is the paper \cite{Ornstein--Weiss}, where much of the body of knowledge of ergodic theory of $\Z$-actions is generalized to the context of actions of amenable groups.

Moreover, some of the most natural and interesting examples of systems with nontrivial mean dimension arise in the context of $\Z ^ k$ actions (or $\R ^ k$-actions, which for our point of view are almost equivalent).
Indeed, the concept of mean dimension  was introduced by Gromov \cite{Gromov} in order to study
dynamical systems in geometric analysis from the viewpoint of mean dimension.
In most of the systems considered in \cite{Gromov}, the acting groups are more complicated than $\mathbb{Z}$.
For example, \cite[Chapter 4]{Gromov} studied a dynamical system consisting of complex subvarieties in $\mathbb{C}^n$.
In this case $\mathbb{C}^{n}$ and its lattice $\mathbb{Z}^{2n}$ are the acting groups, the action being by translation.
Readers can find many more examples in \cite[Chapters 3 and 4]{Gromov}.

Fix a positive integer $k$. Let $X$ be a compact metric space with a continuous action $T:\mathbb{Z}^k\times X\to X$.
We call $(X,\mathbb{Z}^k,T)$ a dynamical system.
We often abbreviate $(X,\mathbb{Z}^k,T)$ to $X$ and
denote $T(n,x)$ by $T^nx$ for $n\in \mathbb{Z}^k$ and $x\in X$,
The mean dimension of the system $X$ is denoted by $\mdim(X)$.
Its definition is given in Section \ref{section: review of mean dimension}.
The paper studies three problems closely related to $\mdim(X)$.

\subsection*{The first problem:\safenopunct}
\textit{When can we approximate a dynamical system $X$ arbitrarily well by finite topological entropy systems?}
More precisely,  when
is $X$ isomorphic to the inverse limit of $\mathbb{Z}^k$-actions
\[ \dots\rightarrow X_n \rightarrow X_{n-1}\rightarrow \dots\rightarrow X_2\rightarrow X_1 \]
such that every $X_n$ has a finite topological entropy?

The inverse limit of finite entropy systems is always \textit{zero mean dimensional}
(\cite[Proposition 6.11]{Lindenstrauss}).
So $\mdim(X)=0$ is a necessary condition. We conjecture the following

\begin{conjecture} Let $\Gamma$ be a discrete amenable group (in particular, consider $\Gamma = \Z ^ k$).
A $\Gamma$-system $(X, \Gamma, T)$ is an inverse limit of zero entropy systems iff it has zero mean dimension.
\end{conjecture}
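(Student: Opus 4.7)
The plan is to prove each direction of the biconditional separately. The direction ``inverse limit of zero entropy systems $\Longrightarrow \mdim(X)=0$'' is immediate from \cite[Proposition 6.11]{Lindenstrauss}: zero topological entropy is a special case of finite topological entropy, so each approximant $X_n$ has zero mean dimension, and mean dimension is monotone along factor maps and inverse limits.

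For the converse, my plan is to adapt the factor construction developed in this paper for the analogous statement with ``finite entropy''. Given a system $X$ with $\mdim(X)=0$ and the marker property, apply the paper's new continuous orbit partitioning technique to produce, for each $n$, a Voronoi-type tiling of every orbit at a large scale $L_n\to\infty$, together with an accuracy parameter $\varepsilon_n\to 0$. Recording ``the local pattern on each tile to accuracy $\varepsilon_n$'' produces a factor $X_n$ of $X$, and the family $\{X_n\}$ assembles into an inverse system whose limit recovers $X$. The additional requirement beyond the paper's main theorem is that $h_{\mathrm{top}}(X_n)=0$ rather than merely finite; the strategy is to couple $\varepsilon_n$ with $L_n$ (letting $\varepsilon_n$ decay very slowly) so that the number of essentially distinguishable local patterns per tile grows \emph{subexponentially} in $L_n^k$. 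The hypothesis $\mdim(X)=0$ is precisely what should make this quantitative trade-off possible, at least heuristically.

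The main obstacle is intrinsic to the statement. An inverse limit of zero-entropy systems has zero topological entropy, so the conjecture implicitly predicts $\mdim(X)=0 \Longrightarrow h_{\mathrm{top}}(X)=0$, which fails for standard examples such as the full shift $\{0,1\}^{\mathbb{Z}^k}$ (zero mean dimension, positive entropy). Consequently no partitioning argument of the type sketched above can succeed unconditionally: a proof would have to either incorporate an a priori zero-entropy assumption on $X$, or exploit dynamical features beyond the marker property that rule out positive-entropy behavior in the zero-mean-dimension regime. Pinning down the correct additional hypothesis --- or alternatively relaxing ``zero entropy'' to ``finite entropy'' as in the paper's main theorem --- is in my view the central conceptual issue before the technical machinery of continuous orbit partitioning can be brought to bear.
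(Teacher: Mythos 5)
You were asked to prove a statement that the paper itself does not prove: it is stated as a conjecture, and the authors say explicitly that it is open even for $\Z$-actions. What the paper actually establishes is the partial result, Theorem~\ref{thm: zero mean dimension and topological entropy}: \emph{under the marker property}, $\mdim(X)=0$ if and only if $X$ is an inverse limit of \emph{finite} entropy systems. Your central observation is correct and worth making explicit: read literally, with ``zero entropy,'' the conjectured equivalence fails, since the topological entropy of an inverse limit equals the supremum of the entropies of the approximating systems (every open cover of the limit is refined by the pullback of a cover of some $X_n$), so an inverse limit of zero entropy systems has zero entropy, whereas $\{0,1\}^{\Z^k}$ has zero mean dimension and entropy $\log 2$. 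The wording should be read as ``finite entropy,'' consistent with the first problem as formulated just before the conjecture and with Theorem~\ref{thm: zero mean dimension and topological entropy}; with that reading your counterexample disappears and the easy direction is exactly the citation you give, \cite[Proposition 6.11]{Lindenstrauss}.

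For the converse (in its finite-entropy form) your sketch both assumes more and delivers less than what is needed, so it neither proves the conjecture nor reproduces the paper's partial result. It silently assumes the marker property (the continuous orbit partitioning of Section~\ref{section: adding one dimension} exists only under that hypothesis), which the conjecture does not grant; and even granting it, ``record the local pattern on each tile to accuracy $\varepsilon_n$'' is not justified as a factor map --- making such a coding continuous and equivariant, and controlling its entropy, is precisely the hard point. Note also that $\mdim(X)=0$ does not make the number of $\varepsilon$-distinguishable patterns per tile subexponential in the tile volume at a \emph{fixed} resolution $\varepsilon$; that distinction is exactly the gap between mean dimension and entropy, so the ``couple $\varepsilon_n$ with $L_n$'' heuristic cannot give zero (or even finite) entropy factors directly. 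The paper's route in Section~\ref{section: small entropy factors} is different: $\mdim(X)=0$ plus the marker property yields the small boundary property, via a genericity argument producing functions $f$ with $\ocap(\{f=0\})=0$ (Theorem~\ref{thm: zero mean dimension and ocap} and Corollary~\ref{cor: zero mean dimension implies SBP}, using the adding-one-dimension Voronoi tiling); then the small boundary property gives, for any two points and any $\varepsilon>0$, a separating factor of entropy less than $\varepsilon$ (Theorem~\ref{thm: SBP and small entropy factor}, quoted from earlier work of Lindenstrauss and Shub--Weiss), and countably many such factors assemble into an inverse system whose limit is $X$. If you pursue your plan, the missing ingredient is this orbit-capacity/small-boundary step, which replaces the naive coding of tiles; and removing the marker property assumption is exactly what keeps the conjecture open.
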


Our first main result is a partial result towards this conjecture for $\Gamma = \Z ^ k$. The following definition is a variant of \cite[Def.~2]{Downarowicz-minimal-models}

\begin{definition}\label{def: marker}
Let $(X, \Gamma,T)$ be a dynamical system.
It is said to have the \textbf{marker property} if for any finite subset $F \subset \Gamma$ there exists an open set
$U\subset X$ such that
\[
X = \bigcup_{n\in \Gamma} T^n U
\]
and that $U\cap T^nU=\emptyset$ for all non-identity $n \in F $.
\end{definition}

In particular it follows from the definition that if $(X, \Gamma, T)$ has the marker property the $\Gamma$-action is free, i.e. $T^nx\neq x$ for all $x \in X$ and $n \in \Gamma$. To conform with the standard terminology for $\Z$-actions, we shall say a dynamical system $(X, \Gamma, T)$ is aperiodic if the underlying $\Gamma$-action is free.
Clearly, the marker property is preserved by extensions: if $(X, \Gamma, T)$ is a $\Gamma$-system and $(Y, \Gamma, S)$ a factor with the marker property then so does $X$. A large class of systems with this property is the class of \textbf{aperiodic minimal systems}: a system $(X, \Gamma, T)$ is said to be \textbf{minimal} if every $\Gamma$-orbit is dense; it is an \textbf{aperiodic minimal system} if the system is in addition aperiodic.

It is not clear whether any aperiodic $\Z^k$-system has the marker property or not. For further discussion of this question see \cite{Gutman 2, Gutman 3}

\begin{theorem}[Cf.~{\cite[Prop.~6.14]{Lindenstrauss}}] \label{thm: zero mean dimension and topological entropy}
Suppose a $\Z^k$-system $X$ has the marker property.
Then $\mdim(X)=0$ if and only if $X$ is isomorphic to the inverse limit of finite topological entropy systems.
\end{theorem}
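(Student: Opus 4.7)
The backward implication---if $X$ is an inverse limit of finite topological entropy systems then $\mdim(X)=0$---is the easier direction and is the $\Z^k$-analogue of \cite[Prop.~6.11]{Lindenstrauss}: mean dimension vanishes on finite-entropy systems, is monotone under factor maps, and satisfies $\mdim\bigl(\varprojlim X_n\bigr)\leq \sup_n \mdim(X_n)$. These properties belong to the general mean-dimension framework already developed in the paper.

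For the forward direction, my plan is to reduce to the following claim: for every $\varepsilon>0$ there is a $\Z^k$-equivariant continuous factor map $\pi_\varepsilon\colon X\to Y_\varepsilon$ with $h_{\mathrm{top}}(Y_\varepsilon)<\infty$ and every fiber of $\pi_\varepsilon$ of diameter less than $\varepsilon$. Assuming this, pick $\varepsilon_n\downarrow 0$, form the joinings of $\pi_{\varepsilon_1},\dots,\pi_{\varepsilon_n}$ to get a tower of finite-entropy factors $X_n$, and let $\hat X=\varprojlim X_n$. Then the induced map $X\to \hat X$ is a continuous equivariant surjection, and its composition with the $n$-th projection has $\varepsilon_n$-small fibers, so it is injective. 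This reduction is identical to the $\Z$-case in \cite[Prop.~6.14]{Lindenstrauss}.

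To build $\pi_\varepsilon$, I would combine three ingredients: (i) the marker property, which for any prescribed finite $F\subset \Z^k$ gives an open $U\subset X$ whose translates cover $X$ and with $U\cap T^nU=\emptyset$ for non-identity $n\in F$; (ii) the hypothesis $\mdim(X)=0$ in the form of a $\widim$ bound, furnishing for any metric on $X$ and any $\delta>0$ a large scale $L$ and a continuous map $f\colon X\to P$ into a finite polyhedron with $\dim P\ll |[-L,L]^k\cap \Z^k|$ whose preimages refine the dynamical $\varepsilon$-cover of $X$ over $[-L,L]^k$; and (iii) the paper's promised new method of continuously partitioning every orbit into good pieces, i.e.\ a $\Z^k$-equivariant assignment $x\mapsto \Omega(x)\subset \Z^k$ (the tile containing $0$) with tiles of size comparable to $L$, with small relative boundary, and depending continuously on $x$. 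Given these, I would define $\pi_\varepsilon(x)$ by recording the $P$-valued array $(f(T^nx))_{n\in\Omega(x)}$, quotiented to respect the tiling's equivariance: finite entropy of $Y_\varepsilon$ follows from the uniform tile size and the uniform bound on $\dim P$, $\Z^k$-equivariance is built in, and $\varepsilon$-injectivity of fibers follows from the refinement property of $f$.

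The central obstacle, and precisely the novelty announced in the abstract, is ingredient (iii). For $\Z$-actions the Rokhlin tower above $U$ together with the total order on $\Z$ makes the assignment $x\mapsto$ (its tile) tautologically continuous. For $\Z^k$ with $k\geq 2$, a naive Voronoi partition generated by the return times of $x$ to $U$ is equivariant but not continuous: arbitrarily small perturbations of $x$ can shift marker points across cell walls and thereby change the combinatorial type of the tiling globally. The main technical work will therefore be to design a soft, partition-of-unity style Voronoi tiling with fuzzy but quantitatively controlled boundaries, such that the tile containing $0$ is locally constant off a thin exceptional set and the boundary contribution to both topological entropy and to the $\widim$-count can be absorbed into the arbitrarily small $\delta$ furnished by $\mdim(X)=0$. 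Once this continuous tiling lemma is in hand, the construction of $\pi_\varepsilon$ and the remainder of the argument should parallel the $\Z$-case proof of \cite[Prop.~6.14]{Lindenstrauss}.
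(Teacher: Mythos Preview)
Your backward direction and the inverse-limit reduction are fine, but the forward direction diverges from the paper's route and, as sketched, has a real gap.

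The paper does \emph{not} attempt to build, for each $\varepsilon>0$, a single finite-entropy factor $\pi_\varepsilon$ with $\varepsilon$-small fibers. Instead it shows that $\mdim(X)=0$ together with the marker property forces the \emph{small boundary property}: for a dense $G_\delta$ set of $f\in C(X,\mathbb{R})$ the zero set $\{f=0\}$ has orbit capacity zero (Theorem~\ref{thm: zero mean dimension and ocap}). The proof perturbs $f$ by painting, inside each tile $W(x,n)$ of the continuous Voronoi construction of Section~\ref{section: adding one dimension}, a linear map $F:X\to\mathbb{R}^{[N]}$ whose zero set meets at most $\widim_\varepsilon(X,d_{[N]})<\varepsilon N^k$ of the $[N]$-coordinates (Lemma~\ref{lemma: simplicial lemma for small entropy factor}), interpolating with a cutoff on $\partial_1 W(x,n)$; Lemma~\ref{lemma: most tiles are large} bounds the density of the boundary layer $\partial(x,E)$. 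Once SBP is established, the passage to an inverse limit of finite-entropy systems is delegated to the Shub--Weiss/\cite{Lindenstrauss lowering} machinery (Theorem~\ref{thm: SBP and small entropy factor} and its Corollary), which separates any two points by a factor of arbitrarily small entropy.

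The gap in your plan is the definition of $\pi_\varepsilon$. Your ingredient~(iii) asks for a tile $\Omega(x)\ni 0$ that is ``locally constant off a thin exceptional set,'' but this is precisely what fails for $k\geq 2$: the paper's tiles $W(x,n)$ vary \emph{continuously} in the Hausdorff metric and are nowhere locally constant in $x$. Consequently ``recording $(f(T^nx))_{n\in\Omega(x)}$'' does not yield a continuous map, and no equivariant quotient repairs this, since at every $x$ near a tile wall the combinatorial type of $\Omega(x)$ changes. Even if you patch continuity with cutoff functions (as the paper does), your claim that $h_{\mathrm{top}}(Y_\varepsilon)<\infty$ is unjustified: $P$ is a polyhedron, not a finite alphabet, and the tiling itself carries information that must be accounted for. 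The paper avoids both issues by not building a factor at all at this stage: it aims only for a scalar function $g$ with $\ocap(\{g=0\})<\delta$, where continuity is handled by the cutoff $\alpha(\dist(0,\partial W(x,n)))$ and no entropy estimate is needed; the production of finite-entropy factors is then outsourced to SBP.
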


The result cited, \cite[Prop.~6.14]{Lindenstrauss}, gives the theorem for actions of $\Z$; even though it is assumed there that $X$ has an aperiodic minimal factor, what is actually used in the proof is that $X$ has the marker property.
Note that if a minimal system is not aperiodic then its topological entropy is zero, hence
Theorem \ref{thm: zero mean dimension and topological entropy} completely answers the first problem
for minimal $\Z ^ k$-systems.

\medskip

\subsection*{The second problem:\safenopunct}
\textit{Suppose the topological entropy of $X$ is infinite. How much topological entropy can be detected if one considers $X$ only up to a given level of accuracy? How fast does this amount of entropy grow as the level of resolution becomes finer and finer?}

Suppose $X$ is endowed with a distance $d$.
For $\varepsilon>0$ we define $S(X,\varepsilon,d)$ as the amount of entropy that can be detected in $X$ by considering it only up to accuracy $\varepsilon$ with respect to the metric $d$ --- see Section \ref{section: metric mean dimension is equal to mean dimension}
for the precise definition.

The topological entropy $h_{\mathrm{top}}(X)$ is the limit of $S(X,\varepsilon,d)$ as $\varepsilon$ goes to zero; the limit, unlike the $S(X,\varepsilon,d)$, does not depend on the choice of $d$.
When $h_{\mathrm{top}}(X)=\infty$, we are interested in its growth.
This motivates the introduction of the \textbf{metric mean dimension} $\mmdim(X,d)$:
\[ \mmdim(X,d) = \liminf_{\varepsilon\to 0}\frac{S(X,\varepsilon,d)}{|\log\varepsilon|}.\]

Weiss and the second named author (\cite[Theorem 4.2]{Lindenstrauss--Weiss}) proved $\mmdim(X,d)\geq \mdim(X)$
for any distance $d$.
Our second main result is the following.
\begin{theorem}[Cf.~{\cite[Thm.~4.3]{Lindenstrauss}}] \label{thm: metric mean dimension is equal to mean dimension}
Suppose $(X, \Z^k, T)$ is a $\Z^k$-system with the marker property.
Then there exists a distance $d$ on $X$, compatible with the topology, satisfying
\begin{equation}\label{eq:metric mean dim}
 \mmdim(X,d) = \mdim(X).
\end{equation}
\end{theorem}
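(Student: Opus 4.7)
The inequality $\mmdim(X,d) \geq \mdim(X)$ holds for every compatible metric $d$ by \cite[Theorem 4.2]{Lindenstrauss--Weiss}, so the task is to exhibit a single compatible metric $d$ realizing the opposite inequality. Following the template of \cite[Theorem 4.3]{Lindenstrauss}, I would construct, for each large scale $N$, an auxiliary pseudo-metric $d_N$ on $X$ whose $\varepsilon$-resolution records only the behaviour of the action on a single ``fundamental piece of orbit'' of cardinality $\sim N^k$, and then combine them into one compatible metric $d = \sum_N c_N d_N$ with constants $c_N$ chosen small enough that the series converges and that distinct points are still separated.

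The new ingredient, and the heart of the argument, is a method for producing such a fundamental piece continuously in $x \in X$ when the acting group is $\Z^k$ with $k \geq 2$. Apply the marker property with $F$ equal to the ball of radius $N$ in $\Z^k$ to obtain an open $U_N \subset X$ such that $\{T^n U_N\}_{n \in \Z^k}$ covers $X$ and the translates $T^n U_N$ are pairwise disjoint for $n$ in this ball. For each $x \in X$ the marker set $\Lambda_N(x) = \{n \in \Z^k : T^n x \in U_N\}$ is then an $N$-separated Delone subset of $\Z^k$, and the natural way to chop $\Z^k$ into pieces of diameter $O(N)$ is to form the Voronoi cells with centres $\Lambda_N(x)$. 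The obstacle is that this hard partition jumps combinatorially as $x$ crosses $\partial U_N$, so one cannot directly use it to build a metric compatible with the topology. The key step, announced in the abstract and signalled by the keyword \emph{Voronoi tiling}, is to replace this hard decomposition by a soft, equivariant choice of tiling $\{V_N(x,\lambda)\}_{\lambda \in \Lambda_N(x)}$ of $\Z^k$ by tiles of uniformly bounded diameter that depends continuously on $x$.

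Granted such a tiling, define $d_N(x,y) = \sup_{n \in V_N(x,0)} d_0(T^n x, T^n y)$, where $V_N(x,0)$ is the tile containing the origin and $d_0$ is any fixed compatible metric. The key entropy estimate is that, over any F\o lner set $\Omega \subset \Z^k$, the $(\varepsilon, d_N)$-separation number of $X$ is bounded, up to sub-exponential factors in $|\Omega|$, by an $\varepsilon$-width invariant of $X$ restricted to a single tile raised to the power of the number of tiles meeting $\Omega$: inside each tile, orbits are only pinned down to accuracy $\varepsilon$, and distinct tiles interact only through a thin boundary of negligible volume as $N \to \infty$. Taking logarithms, dividing by $|\Omega|\,|\log \varepsilon|$, sending $\Omega$ to infinity via Lemma~\ref{lemma: Ornstein--Weiss}, and finally letting $N \to \infty$, upgrades this into $\mmdim(X, d_N) \leq \mdim(X) + o(1)$. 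Choosing $c_N$ with sufficiently rapid decay yields a single compatible metric $d$ realizing equality~\eqref{eq:metric mean dim}.

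The main obstacle is the continuous orbit-partition step. For $\Z$-actions one simply cuts each orbit at the successive marker return times, and the resulting intervals depend continuously on $x$ because the natural order on $\Z$ is robust under small perturbations. For $\Z^k$ with $k \geq 2$ the Voronoi combinatorics around a point varies discontinuously, and the construction must simultaneously guarantee (i) tiles of uniformly bounded diameter and comparable cardinality, (ii) equivariance $V_N(T^m x, \lambda - m) = V_N(x,\lambda) - m$, and (iii) continuous dependence on $x$ quantitative enough that $d_N$ is compatible with the topology of $X$. Reconciling these three requirements is precisely the content of the promised new partition-of-orbits method; once it is in place, the remaining resolution/entropy estimates follow the line of reasoning of \cite[Theorem 4.3]{Lindenstrauss} essentially verbatim, with F\o lner averaging replacing the use of intervals in $\Z$.
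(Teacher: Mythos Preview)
Your proposal has a genuine gap in the entropy estimate, and the overall architecture differs from the paper's.

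First, the pseudo-metric $d_N(x,y)=\sup_{n\in V_N(x,0)}d_0(T^nx,T^ny)$ is not symmetric, since the index set depends on $x$; and even after symmetrizing, ``the tile containing the origin'' is still a hard choice that jumps when $0$ crosses a facet, so $d_N$ will not be continuous in the way you need. More seriously, the sketched entropy bound does not work. You want $S(X,\varepsilon,d_N)/|\log\varepsilon|$ to be close to $\mdim(X)$, but bounding the $(d_N)_\Omega$-covering number by a per-tile quantity raised to the number of tiles only gives, at best, $S(X,\varepsilon,d_N)\leq S(X,\varepsilon,d_0)$, which says nothing about $\mmdim$. The bridge from $\widim_\varepsilon$ (a \emph{dimension}) to covering numbers at scale $\varepsilon$ is missing: you need the fact that a linear image of an $m$-dimensional simplicial complex has $\varepsilon$-covering number $\lesssim\varepsilon^{-m}$ (Lemma~\ref{lemma: spaning set of polyhedron}), and nothing in your outline invokes simplicial approximation.

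The paper's route is different in structure. Rather than summing pseudo-metrics, it fixes the Hilbert-cube shift $(K^{\mathbb{Z}^k},D)$ with its canonical metric, and shows by a Baire-category argument on $C(X,K)$ that for generic $f$ the equivariant map $I_f:X\to K^{\mathbb{Z}^k}$ is an embedding with $\mmdim(I_f(X),D)\leq\mdim(X)$; the desired metric is then the pull-back of $D$. The density step (Proposition~\ref{prop: main proposition for metric mean dimension}) perturbs $f$ to $g$ by first $\varepsilon$-embedding $(X,d_{[N]})$ into a simplicial complex $P$ of dimension $\widim_\varepsilon(X,d_{[N]})$, taking a linear map $F:P\to K^{[N]}$ approximating $I_f|_{[N]}$, and then \emph{painting} $F$ inside the continuous Voronoi tiles $W(x,n)$ produced by the ``adding one dimension'' trick of Section~\ref{section: adding one dimension}. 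Because $I_g(x)|_{a+[N]}$ lands in $F(X)$ for most blocks $a+[N]$, the covering number of $I_g(X)|_{B_R}$ is controlled by $A(F(X),\varepsilon,\norm{\cdot}_\infty)^{|B_R|/N^k}$ times a negligible boundary factor, and Lemma~\ref{lemma: spaning set of polyhedron} converts $\dim P$ into the exponent of $\varepsilon$. This is the step your outline lacks.
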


Therefore, at least for extensions of completely aperiodic minimal systems, the growth of $S(X,\varepsilon,d)$ can be
controlled by the mean dimension $\mdim(X)$.

Again we conjecture that \eqref{eq:metric mean dim} holds for any $\Z^k$-system $X$, without having to assume the marker property, and indeed not just for $\Z^k$-systems but for any countable amenable group.

\subsection*{The third problem:\safenopunct}
Let $D$ be a positive integer, and consider the $\mathbb{Z}^k$-shift on
$([0,1]^D)^{\mathbb{Z}^k}$:
\[ \mathbb{Z}^k\times ([0,1]^D)^{\mathbb{Z}^k} \to ([0,1]^D)^{\mathbb{Z}^k}, \quad
   (a, (x_n)_{n\in \mathbb{Z}^k}) \to (x_{n+a})_{n\in \mathbb{Z}^k}.\]
\textit{Given a dynamical system $X$, when can we equivariantly embed $X$ into $([0,1]^D)^{\mathbb{Z}^k}$?}
Beboutov showed that any action of $\R$ on a compact metric space whose fixed points can be embedded in an interval can be embedded in the space of continuous function on $\R$, with the natural action of $\R$ (cf.~\cite{Kakutani-proof-Beboutov}).
In his Ph.D. thesis, Jaworski \cite{Jaworski} proved that if a system $(X,\mathbb{Z},T)$ is finite dimensional and has no periodic points then
there exists an embedding from $X$ into the $\mathbb{Z}$-shift $[0,1]^{\mathbb{Z}}$. It was left unclear whether the periodic points are the only obstruction for such an embedding.
Weiss and the second named author \cite{Lindenstrauss--Weiss} observed that mean dimension
is another obstruction.
The mean dimension of $\mathbb{Z}^k$-shift $([0,1]^D)^{\mathbb{Z}^k} $ is $D$. Hence
if $X$ is embedded into it, its mean dimension $\mdim(X)$ is not greater than $D$.
The paper \cite[Proposition 3.5]{Lindenstrauss--Weiss} constructed an aperiodic minimal system for $\Z$ whose
mean dimension is strictly greater than $1$; this construction was generalized to arbitrary amenable groups in \cite{Krieger-minimal-large}.
These constructions give examples of
aperiodic minimal systems which cannot be embedded into $[0,1]^{\mathbb{Z}^k}$.

Even if the $\Z^k$-action is free, mean dimension is not the only obstruction for embedding. In \cite{Lindenstrauss--Tsukamoto} a $\Z$-minimal system is given with $\mdim (X)=D/2$ that cannot be embedded in $([0,1]^{D})^{\mathbb{Z}}$ for any $D \in \mathbb{N}$, and it is straightforward to adapt this example to $\Z^d$ (and even to general discrete amenable groups).

Our third main theorem is a partial converse to this result.

\begin{theorem}[Cf.~{\cite[Thm.~5.1]{Lindenstrauss}}] \label{thm: embedding theorem}
Suppose $(X, \Z^k, T)$ is a $\Z^k$-system with the marker property.
If it satisfies
\[ \mdim(X) < \frac{D}{2^{k+1}},\]
then there exists an embedding from $X$ into $([0,1]^{2D})^{\mathbb{Z}^k}$.
\end{theorem}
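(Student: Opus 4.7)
The plan is to adapt the Baire-category embedding strategy of \cite[Thm.~5.1]{Lindenstrauss} to the $\mathbb{Z}^k$-setting, using the new continuous Voronoi-tiling method advertised in the abstract as the substitute for Lindenstrauss's one-dimensional block decomposition.

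\textbf{Baire-category setup.} Let $P=[0,1]^{2D}$ and consider the Polish space $C(X,P)$ with the supremum metric. For $f\in C(X,P)$, let $I_f:X\to P^{\mathbb{Z}^k}$ be the equivariant map $I_f(x)=(f(T^n x))_{n\in \mathbb{Z}^k}$. Since $X$ is compact, $I_f$ is an embedding iff it is injective. Injectivity is the intersection of the open conditions $E_\varepsilon = \{f : d(x,y)\geq\varepsilon \Rightarrow \sup_n \|f(T^nx)-f(T^ny)\|\geq \delta(f,\varepsilon)>0\}$ over a countable sequence $\varepsilon\downarrow 0$. Hence by Baire category, it suffices to show every $E_\varepsilon$ is dense in $C(X,P)$. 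So we fix $\varepsilon>0$, an arbitrary $f_0\in C(X,P)$ and $\eta>0$, and must produce $f$ within $\eta$ of $f_0$ separating all pairs $(x,y)$ with $d(x,y)\geq\varepsilon$.

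\textbf{Continuous Voronoi tilings from markers.} Using the marker property, for each large integer $L$ pick an open set $U_L\subset X$ whose $\mathbb{Z}^k$-translates cover $X$ and satisfy $U_L\cap T^nU_L=\emptyset$ for $n\in [-L,L]^k\setminus\{0\}$. The associated ``marker orbit'' $M(x):=\{n\in \mathbb{Z}^k:T^n x\in U_L\}$ is $L$-separated and $cL$-dense in $\mathbb{Z}^k$ (with $c$ depending only on $k$). A naive Voronoi partition of $\mathbb{Z}^k$ based on $M(x)$ is not continuous in $x$, because marker points can appear or disappear as $x$ varies across $\partial U_L$. The technical novelty is to produce, in a canonical and continuous way in $x$, a partition of $\mathbb{Z}^k$ into cells of bounded diameter and of only finitely many combinatorial shapes $V_1,\dots,V_s\subset \mathbb{Z}^k$, each of cardinality comparable to $L^k$. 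I would construct this by combining the marker set with a continuous auxiliary function on $X$ (tracking ``how deep'' in $U_L$ the point $T^n x$ is), and then taking an appropriate weighted-Voronoi refinement so that cell boundaries move continuously with $x$.

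\textbf{Dimension count via $\mdim$ and local perturbation.} Once the tiling is fixed, the key quantitative step is to perturb $f_0$ on the ``zero'' cell $C(x)\subset\mathbb{Z}^k$ of each tiling. By Lemma~\ref{lemma: Ornstein--Weiss} together with the definition of $\mdim$, for sufficiently large $L$ the $\varepsilon$-width dimension of the $|V|$-window restriction $X\to P^V$ is at most $|V|(\mdim(X)+o(1))<|V|D/2^{k+1}$. A Menger--N\"obeling general position argument then says that after a perturbation of size $\ll \eta$, the $V$-block of $I_f$ becomes $\varepsilon$-injective as a map into $P^V=[0,1]^{2D|V|}$; the relevant arithmetic $2\cdot |V|D/2^{k+1}=|V|D/2^k< 2D|V|$ leaves a factor $2^k$ of slack, which is precisely what is needed to absorb the combinatorics below.

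\textbf{Main obstacle: gluing across tilings.} The real difficulty, and the place where the exponent $2^{k+1}$ is forced, lies in handling a pair $x\neq y$ whose tilings of $\mathbb{Z}^k$ \emph{do not agree}: the cell of $x$ containing $0$ and the cell of $y$ containing $0$ generally have different shapes, and worse, the same perturbation of $f$ must simultaneously separate all such pairs across all $s^2$ ordered pairs of shapes. I would handle this by a finite induction over shape-pairs $(V_i,V_j)$: at each step perturb $f$ on a small set to separate the corresponding stratum of pairs, exploiting the factor $2^k$ of slack above to preserve separation achieved in earlier steps. A further factor of $2$ is needed because separating $I_f(x)$ from $I_f(y)$ requires the $2D$-dimensional value space to carry twice the dimension of each orbit window, by the usual Menger--N\"obeling $2\dim+1$. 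Together these give the bound $D>2^{k+1}\mdim(X)$. Once $E_\varepsilon$ is shown dense for every $\varepsilon>0$, Baire category yields an $f\in\bigcap_\varepsilon E_\varepsilon$ and hence the desired embedding $I_f:X\hookrightarrow ([0,1]^{2D})^{\mathbb{Z}^k}$.
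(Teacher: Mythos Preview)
Your Baire-category framework is correct and matches the paper's setup (their Proposition~7.2). But the heart of the argument --- how to deal with pairs $(x,y)$ whose tilings disagree --- is not as you describe, and your sketch has two genuine gaps.

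First, the continuous tiling. The paper does \emph{not} produce a partition into finitely many combinatorial shapes. Its ``adding one dimension'' construction (Section~4) takes the Voronoi diagram in $\mathbb{R}^{k+1}$ with centers $(n,1/\phi(T^nx))$ and slices at height $-H$; the resulting tiles $W(x,n)\subset\mathbb{R}^k$ vary \emph{continuously} in $x$ and take uncountably many shapes. Your ``weighted-Voronoi refinement with finitely many shapes $V_1,\dots,V_s$'' is a different object, and I do not see how to build it; in particular, any scheme with finitely many shapes will have discontinuities where the shape type jumps.

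Second, and more seriously, the paper does not resolve the mismatched-tilings problem by an induction over shape-pairs $(V_i,V_j)$. Since the tiles have a continuum of shapes, no such finite induction is available, and even with finitely many shapes the number $s$ grows with $L$, so it could not account for the fixed constant $2^k$. What the paper does instead is split the target $[0,1]^{2D}=K\times K$ and build $g=(g_1,g_2)$ in two stages. The first coordinate $g_1$ is designed so that from $I_{g_1}(x)$ alone one can \emph{decode} a ``pseudo-tiling'' $(\mathcal{W}_n)_n$ of $\mathbb{Z}^k$; thus if $I_{g_1}(x)=I_{g_1}(y)$ the pseudo-tilings of $x$ and $y$ automatically coincide, regardless of whether the true Voronoi tilings do. The second coordinate $g_2$ is then built using only this decoded pseudo-tiling, so the situation reduces to the ``same tiling'' case. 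The factor $2^k$ in $2^{k+1}$ comes not from shape-pair combinatorics but from the encoding step: to make the decoding robust, $g_1$ must be recognizable on \emph{half-size} blocks $[N/2]$, forcing $\dim P<\tfrac{D}{2}(N/2)^k=DN^k/2^{k+1}$ in the relevant simplicial lemma (their Lemma~7.3). Without this encode/decode mechanism your argument has no way to compare $I_f(x)$ and $I_f(y)$ when their tilings differ.
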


In \cite[Thm.~5.1]{Lindenstrauss} it was shown that a $\Z$-systems with the marker property\footnote{Formally the assumption was that $X$ has a non-periodic minimal factor.} with $\mdim(X) < {D}/{36}$ can be embedded in $([0,1]^{D})^{\mathbb{Z}}$.
For technical reasons, it is useful for us to have as a target the $\mathbb{Z}^k$-shift on $([0,1]^{2D})^{\mathbb{Z}^k}$ rather than $([0,1]^D)^{\mathbb{Z}^k}$.

Note also that in our condition $\mdim(X)<D/2^{k+1}$, the constant involved is likely far from optimal. Presumably for an aperiodic $\Z^k$-system if $\mdim(X) < \frac{D}{2}$ then $X$ can be embedded in  $([0,1]^D)^{\mathbb{Z}^k}$;
for $\mathbb{Z}$-actions, it is conjectured in \cite{Lindenstrauss--Tsukamoto}
that if a system $(X,\mathbb{Z},T)$ satisfies
\[ \mdim(X)<\tfrac{D}2, \quad  \tfrac1n{\dim \left(\{x|T^n x=x\}\right)} <\tfrac{D}{2}  \quad (\forall n\geq 1)\]
then there is an embedding from $X$ into $([0,1]^D)^{\mathbb{Z}}$.

We can obtain an embedding result with the optimal constant of $1/2$ if we make a stronger assumption about $X$ than in Theorem~\ref{thm: embedding theorem} above.
Any closed invarinat subspace of $\{1,2,\dots,l\}^{\mathbb{Z}^k}$ on which $\Z^k$ acts freely (whether minimal or not) can be seen to satisfy the marker propert. We shall call such a system an \textbf{aperiodic symbolic system}. Following \cite{Gutman--Tsukamoto} (who treated the case of $\Z$-actions) we show that if $X$ is assumed to have an aperiodic symbolic factor one can get an optimal embedding constant:

\begin{theorem}[Cf.~{\cite[Corollary 1.8]{Gutman--Tsukamoto}}]\label{thm: embedding for extensions of symbolic systems}
Suppose $X$ has an aperiodic symbolic factor.
If $\mdim(X)<D/2$ then there exists an embedding from $X$ into $([0,1]^D)^{\mathbb{Z}^k}$.
\end{theorem}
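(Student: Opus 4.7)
The plan is to adapt the Baire category / general-position scheme of Lindenstrauss to $\mathbb{Z}^k$-actions, exploiting the observation, pioneered in \cite{Gutman--Tsukamoto} for $k=1$, that an aperiodic symbolic factor supplies \emph{clopen} Kakutani--Rokhlin tilings. The clopen structure eliminates the boundary losses that in Theorem \ref{thm: embedding theorem} cost a factor of $2^{k+1}$, and is precisely what makes the optimal embedding constant $D/2$ accessible.

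As in \cite{Lindenstrauss}, I would identify continuous equivariant maps $X \to ([0,1]^D)^{\mathbb{Z}^k}$ with continuous $f \colon X \to [0,1]^D$ via $I_f(x) = (f(T^n x))_{n \in \mathbb{Z}^k}$. The space $C(X,[0,1]^D)$ is complete under the uniform metric, so it suffices to show that for every $\varepsilon>0$ the open set
\[ U_\varepsilon = \bigl\{ f \in C(X,[0,1]^D) : d(x,x') \geq \varepsilon \Longrightarrow I_f(x) \neq I_f(x') \bigr\} \]
is dense; then a Baire generic $f \in \bigcap_{n\ge 1} U_{1/n}$ yields the desired embedding.

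To prove density, let $\pi \colon X \to Y \subset \{1,\dots,\ell\}^{\mathbb{Z}^k}$ be the aperiodic symbolic factor. For every large integer $N$, aperiodicity of $Y$ together with the clopenness of its cylinders produces a clopen partition
\[ X \;=\; \bigsqcup_{j=1}^{r} \bigsqcup_{n \in F_j} T^n A_j, \]
with pairwise disjoint clopen bases $A_j \subset X$ and finite shapes $F_j \subset \mathbb{Z}^k$ that are essentially cubes of side $N$ (the non-cube part has relative size $o(1)$ as $N \to \infty$). Given $f_0$ and $\delta>0$, view the map $\Phi_j(x) = (f(T^n x))_{n \in F_j}$ as a continuous map $A_j \to [0,1]^{D|F_j|}$. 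By definition of mean dimension together with the hypothesis $\mdim(X) < D/2$, for $N$ sufficiently large $\widim_\varepsilon(A_j) < (D/2)\, |F_j|$ for every $j$. The $\widim_\varepsilon$-form of the Menger--N\"obeling general position theorem then supplies a $\delta$-perturbation $\Phi_j'$ that is $\varepsilon$-injective on $A_j$. Because the translates $T^n A_j$ for $n \in F_j$ and the bases $A_{j'}$ for $j' \neq j$ are mutually disjoint clopen sets, the tower-wise perturbations paste into a single continuous $f$ with $\|f-f_0\|_\infty < \delta$. To certify $f \in U_\varepsilon$ globally (not merely inside each $A_j$), one also encodes the cylinder data of $\pi$ into $f$, which distinguishes points whose tower labels differ.

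The main technical obstacle is the coordinated choice of the three scales $\varepsilon$, $N$, $\delta$: one must take $N$ large enough that $\widim_\varepsilon$ on each base drops below $(D/2)|F_j|$, while keeping the per-tower perturbation below $\delta$ in the uniform norm, and verifying that any two points of $X$ at distance $\geq \varepsilon$ are separated either by the symbolic information $\pi(x),\pi(x')$ (across towers) or by the general-position perturbation (within a tower). The clopenness of the Rokhlin tiling is what makes this bookkeeping carry no error term and yields the sharp threshold $D/2$ rather than the sub-optimal $D/2^{k+1}$ of Theorem \ref{thm: embedding theorem}.
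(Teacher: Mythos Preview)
Your overall scheme---Baire category in $C(X,[0,1]^D)$, general position via $\widim_\varepsilon < (D/2)|F|$, and using the symbolic factor to carry the tiling information---is exactly the paper's strategy. The paper in fact separates the last point off cleanly by first proving that $(I_f,\pi):X\to ([0,1]^D)^{\mathbb{Z}^k}\times Z$ is an embedding for generic $f$ (Theorem~\ref{thm: embedding of zero dimensional extension}), and only then observing that $[0,1]^D\times\{1,\dots,l\}$ embeds topologically in $[0,1]^D$; your ``encode the cylinder data of $\pi$ into $f$'' is the same move packaged differently.

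The real gap is the sentence ``aperiodicity of $Y$ together with the clopenness of its cylinders produces a clopen partition $X=\bigsqcup_j\bigsqcup_{n\in F_j}T^nA_j$ with \dots shapes $F_j$ that are essentially cubes of side $N$.'' For $k=1$ this is classical, but for $k\geq 2$ no such clopen Rokhlin decomposition into approximate cubes is available off the shelf, and you give no construction. This is not a detail: it is \emph{the} technical issue the paper has to solve. The paper's solution is to abandon cubes entirely. From a clopen marker set $U\subset Z$ (Lemma~\ref{lemma: clopen marker}) one takes, for each $x$, the Voronoi tiling of $\mathbb{R}^k$ with centers $\{n:S^n\pi(x)\in U\}$. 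Because $U$ is clopen the tiles $V(x,n)$ are \emph{locally constant} in $x$, which is what makes the painted perturbation continuous; and Lemma~\ref{lemma: voronoi cells are large} shows that the lattice tiles $V^{\mathbb{Z}}(x,n)$ can be made F{\o}lner, so that $\widim_\varepsilon(X,d_{V^{\mathbb{Z}}(x,n)}) < (D/2)\,|\mathrm{int}_1^{\mathbb{Z}}V^{\mathbb{Z}}(x,n)|$ via the Ornstein--Weiss lemma. The shapes are arbitrary convex polytopes, not cubes, and there may be infinitely many translation types---the proof handles them all at once by indexing over equivalence classes $[\Omega_i]$. Your outline would go through once you supply such a tiling, but as written the key step is asserted rather than proved.
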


We note that the proof of Theorem~\ref{thm: embedding for extensions of symbolic systems} is substantially less complicated than that of Theorem~\ref{thm: embedding theorem}.

\subsection{Main ideas} \label{subsection: main ideas}

To explain the main ideas, we briefly review the proof of Jaworski's theorem
\cite{Jaworski}:
If a system $(X,\mathbb{Z},T)$ is finite dimensional and has no periodic points, then
it can be embedded into $[0,1]^\mathbb{Z}$.
We follow the presentation of Auslander \cite[Chapter 13]{Auslander}.
For a continuous map $f:X\to [0,1]$ we define $I_f:X\to [0,1]^\mathbb{Z}$ by
$I_f(x) := (f(T^nx))_{n\in \mathbb{Z}}$.
This is always $\mathbb{Z}$-equivariant.
By the Baire category theorem, it is enough to prove that for any distinct $x_1,x_2\in X$
we can find closed neighborhoods $A_i$ of $x_i$ such that
\begin{equation} \label{eq: Jaworski theorem}
   \{f\in C(X,[0,1])|\, I_f(A_1)\cap I_f(A_2) = \emptyset\}
\end{equation}
is open dense in the space $C(X,[0,1])$ of continuous functions $f:X\to [0,1]$.

Fix a natural number $N > 2\dim X$.
Since $X$ has no periodic points, we can find $j_1<j_2<\dots<j_N$ such that
$2N$ points
\[ T^{j_1}x_1, \dots, T^{j_N}x_1, T^{j_1}x_2, \dots, T^{j_N}x_2\]
are all distinct.
(If $x_2=T^mx_1$ for some $m$ then we take $0,m+1, 2(m+1), \dots, (N-1)(m+1)$. Otherwise we take $0,1,2,\dots,N-1$.)
There exist open neighborhoods $U_i$ of $x_i$ such that
$2N$ sets $T^{j_n}U_i$ $(1\leq n\leq N, i=1,2)$ are pairwisely disjoint.
We take closed neighborhoods $A_i\subset U_i$ of $x_i$.
Let $\varphi:X\to [0,1]$ be a cut-off function such that $\varphi=1$ on the union of the $2N$ sets $T^{j_n}A_i$ and
its support is contained in the union of $T^{j_n}U_i$.

Obviously the set (\ref{eq: Jaworski theorem}) is open.
Take arbitrary $f\in C(X,[0,1])$ and $\delta>0$.
The condition $N>2\dim X$ implies that \textit{generic} continuous maps from $X$ to $[0,1]^N$ are
\textit{topological embeddings}.
(See \cite[Thm.~V2]{Hurewicz--Wallman}; it can be also deduced from
Lemmas \ref{lemma: approximation by linear map} and \ref{lemma: embedding of simplicial complex} below.)
Hence we can find
an embedding $F:X\to [0,1]^N$ such that
the $\ell^\infty$-distance between $F(x)$ and $(f(T^{j_n} x))_{1\leq n\leq N}$ is less than $\delta$ for all $x\in X$.

We define a perturbation $g$ of $f$ as follows:
If $x\in T^{j_n}U_i$ for some $n$ and $i$ then
\[ g(x) = (1-\varphi(x))f(x) + \varphi(x)F(T^{-j_n}x)_n.\]
Otherwise set $g(x) =f(x)$.
This satisfies $|g(x)-f(x)|<\delta$ and for $x\in A_1\cup A_2$
\[ (g(T^{j_1}x),\dots,g(T^{j_N}x)) = F(x).\]
Then $g$ satisfies $I_g(A_1)\cap I_g(A_2)=\emptyset$ because $F$ is an embedding.
This shows the density of (\ref{eq: Jaworski theorem}) and finishes the proof.

The above proof has three important steps:
\begin{enumerate}
  \item Find good pieces $T^{j_1}x_1,\dots, T^{j_N}x_1$ and $T^{j_1}x_2,\dots,T^{j_N}x_2$ of the orbits of $x_1$ and $x_2$.
  \item Find an embedding $F$ approximating $I_f|_{\{j_1,\dots,j_N\}}$ by using $N>2\dim X$.
  \item Define a perturbation $g$ of $f$ by ``painting $F$ on the good pieces of orbits''.
\end{enumerate}
The proofs of our main theorems develop similar three steps
for infinite dimensional systems.
We do not need a substantial change in step (3).
In step (2) we replace ``embedding'' with ``$\varepsilon$-embedding'', which is an approximative version
of embedding (see Section~\ref{section: review of mean dimension}.)
The condition $N>2\dim X$ is replaced with a condition on mean dimension.
So we crucially need mean dimension theory in step (2).
But machineries for this step were already developed for $\mathbb{Z}$-actions
in \cite{Lindenstrauss}; the $\mathbb{Z}^k$-case does not require a new idea.

The central issue is step (1).
\textit{We need to continuously partition every orbit into good pieces
where steps (2) and (3) work well}.
For dealing with this problem the paper \cite{Lindenstrauss} introduced a new topological analogue of
the Rokhlin tower lemma in ergodic theory.
But this does not work for $\mathbb{Z}^k$.
The failure of the tower lemma technique is the main barrier to the generalization to $\mathbb{Z}^k$.

The first idea to overcome this difficulty is the use of \textbf{Voronoi tiling}.
Gutman \cite{Gutman 1} is the first paper using Voronoi tiling in mean dimension theory.
We develop this technique further.
Suppose $\mathbb{Z}^k$ continuously acts on a compact metric space $X$.
Take a small open set $U\subset X$.
For each point $x\in X$ we consider the set
\[ C(x) = \{n\in \mathbb{Z}^k|\, T^nx\in U\}, \]
and let
\[ \mathbb{R}^k = \bigcup_{n\in C(x)} V(x,n), \quad
   V(x,n) = \{u\in \mathbb{R}^k|\, \forall m\in C(x): |u-n|\leq |u-m|\}, \]
be the Voronoi tiling associated with $C(x)$.
We try to use an appropriate piece $V(x,n)$ (or the lattice points $V(x,n)\cap \mathbb{Z}^k$)
as a substitute for $\{j_1,\dots,j_N\}$ in the proof of Jaworski's theorem.
This idea perfectly works if $X$ has an aperiodic symbolic factor.
We prove Theorem \ref{thm: embedding for extensions of symbolic systems} by this method.
There is some analogy here to \cite{Lightwood 1, Lightwood 2} where Voronoi tilings were used to obtain a
$\mathbb{Z}^k$-analogue of the Krieger embedding theorem
\cite{Krieger} for symbolic subshifts.

In general the tiles $V(x,n)$ do not depend continuously on $x\in X$, and
the above idea cannot be directly applied to Theorems \ref{thm: zero mean dimension and topological entropy},
\ref{thm: metric mean dimension is equal to mean dimension} or
\ref{thm: embedding theorem}.
So we introduce the second idea: \textbf{Adding one dimension}.
This is the most important new idea in this paper.
We take a cut-off function $\phi:U\to [0,1]$ supported in the above open set $U$, and consider the set
\[  \{(n,1/\phi(T^nx))|\, n\in \mathbb{Z}^k: \phi(T^nx)\neq 0\}. \]
Note that this is a subset of $\mathbb{R}^{k+1}$. So \textit{we go up one dimension higher}.
Let $\mathbb{R}^{k+1} = \bigcup_{n\in \mathbb{Z}^k} V(x,n)$ be the associated Voronoi decomposition.
We take a large number $H$ and set $W(x,n) = V(x,n)\cap (\mathbb{R}^k\times \{-H\})$.
Then we get the decomposition
\[ \mathbb{R}^k\times \{-H\} = \bigcup_{n\in \mathbb{Z}^k} W(x,n).\]
This \textit{does} depend continuously on $x\in X$.
Thus we can use $W(x,n)$ as a substitute for $\{j_1,\dots,j_N\}$ in Jaworski's theorem.
This establishes step (1).

\subsection{Open problems} \label{subsection: open problems}

The following three questions are the main open problems arising from the paper.
\begin{itemize}
 \item \textit{Can one remove the marker property assumption in Theorems~\ref{thm: zero mean dimension and topological entropy}
 and~\ref{thm: metric mean dimension is equal to mean dimension}?}
 We conjecture that $\mdim(X)=0$ is equivalent to the condition that $X$ is isomorphic to the inverse limit of
 finite entropy systems without any additional condition.
 We also conjecture that for any system $X$ there always exists a distance $d$ satisfying $\mdim(X)=\mmdim(X,d)$.
 These conjectures are open even for $\mathbb{Z}$-actions.

 \item \textit{What is the optimal condition to ensure the existence of the embedding into the $\mathbb{Z}^k$-shift
 $([0,1]^D)^{\mathbb{Z}^k}$?}
 Probably it is enough to assume $\mdim(X)<D/2$ and some conditions on the periodic points.

 \item \textit{How can one extend the theory to actions of non-commutative groups?}
 Our new technique in this paper uses the Euclidean geometry in an essential way, and hence
 cannot be generalized to other groups directly.
\end{itemize}

\subsection{Organization of the paper} \label{subsection: organization of the paper}

In Section \ref{section: review of mean dimension} we recall some basic definitions related to mean dimension.
In Section \ref{section: proof of embedding and symbolic system} we prove
the embedding theorem for extensions of aperiodic symbolic systems (Theorem
\ref{thm: embedding for extensions of symbolic systems}) by using Voronoi tiling.
Although this motivates more difficult arguments in later sections, it is logically independent of other theorems.
In Section \ref{section: adding one dimension} we explain the technique of adding one dimension.
In Section \ref{section: small entropy factors}
we review the idea of small boundary property (a dynamical analogue of totally disconnectedness)
and solve the problem of approximating a zero mean dimensional system by finite entropy ones
(Theorem \ref{thm: zero mean dimension and topological entropy}).
In Section \ref{section: metric mean dimension is equal to mean dimension}
we prove the existence of
a distance $d$ satisfying $\mdim_M(X,d)=\mdim(X)$ (Theorem
\ref{thm: metric mean dimension is equal to mean dimension}).
In Section \ref{section: embedding problem}
we prove the embedding theorem for extensions of aperiodic minimal systems (Theorem \ref{thm: embedding theorem}).
Sections \ref{section: small entropy factors}, \ref{section: metric mean dimension is equal to mean dimension}
and \ref{section: embedding problem} are almost independent of each other;
Section \ref{section: embedding problem} is substantially more complicated
than other sections.

\section{Review of mean dimension} \label{section: review of mean dimension}

Here we recall some basic facts on mean dimension.
For the details, see Gromov \cite{Gromov} and Lindenstrauss--Weiss \cite{Lindenstrauss--Weiss}.
Throughout the paper we assume that $k$ is a fixed positive integer.
For a positive integer $N$ we set
\[ [N] =\{0,1,2,\dots,N-1\}^k \subset \mathbb{Z}^k.\]
All simplicial complexes are implicitly assumed to be finite, i.e. consist of only finitely many simplices.

Let $(X,d)$ be a compact metric space.
Let $Y$ be a topological space, and $f:X\to Y$ a continuous map.
For a positive number $\varepsilon$ the map $f$ is called an \textbf{$\varepsilon$-embedding}
if $\diam f^{-1}(y) < \varepsilon$ for all $y\in Y$.
We define $\widim_\varepsilon(X,d)$ as the minimum integer $n\geq 0$ such that
there exist an $n$-dimensional simplicial complex $P$ and an $\varepsilon$-embedding
$f:X\to P$.

Let $P$ be a simplicial complex, and $V$ a Banach space.
A map $f:P\to V$ is said to be linear if
it has the following form on every face $\Delta\subset P$:
\[ f\left(\sum_{i=0}^n \lambda_iv_i\right) = \sum_{i=0}^n \lambda_i f(v_i) \]
where $v_i$ are the vertices of $\Delta$ and $\lambda_i$ are nonnegative numbers satisfying $\sum_{i=0}^n \lambda_i =1$.

\begin{lemma} \label{lemma: approximation by linear map}
Let $C\subset V$ be a convex subset,
$(X,d)$ a compact metric space,
and $f:X\to C$ a continuous map.
Suppose $\varepsilon$ and $\delta$ are positive numbers satisfying
\[ d(x,y)<\varepsilon \Longrightarrow \norm{f(x)-f(y)} < \delta.\]
Let $\pi:X\to P$ be an $\varepsilon$-embedding from $(X,d)$ to a simplicial complex $P$.
Then, after replacing $P$ by a sufficiently finer subdivision, there exists a
linear map $g:P\to C$ satisfying
\[ \norm{f(x)-g(\pi(x))} < \delta, \quad \forall x\in X.\]
\end{lemma}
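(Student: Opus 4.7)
The plan is to define $g$ by specifying its values at the vertices of a sufficiently fine subdivision of $P$ using $f$-values on appropriate fiber preimages, and then extending linearly over each simplex. Convexity of $C$ will guarantee that the linear extension lands in $C$, and the $\varepsilon$-embedding property of $\pi$ combined with the uniform continuity condition on $f$ will deliver the pointwise estimate $\norm{f(x)-g(\pi(x))}<\delta$.

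The main technical step, and the only one requiring real work, is to upgrade the pointwise hypothesis $\diam \pi^{-1}(y)<\varepsilon$ to a uniform statement: there exists $\eta>0$ such that $\diam \pi^{-1}(B(y,\eta))<\varepsilon$ for every $y\in P$. This follows from a standard compactness argument; if it failed, one could extract sequences $a_n,b_n\in X$ with $d(a_n,b_n)\geq \varepsilon$ whose images $\pi(a_n), \pi(b_n)$ both converge to a common limit $y\in P$, producing two points in $\pi^{-1}(y)$ at distance at least $\varepsilon$ and contradicting the $\varepsilon$-embedding property of $\pi$.

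With such an $\eta$ in hand, I replace $P$ by a subdivision of mesh less than $\eta/2$, so that the open star $\mathrm{St}(v)$ of each vertex $v$ is contained in the ball of radius $\eta$ around $v$. For every vertex $v$ I choose, whenever possible, a point $x_v \in \pi^{-1}(\mathrm{St}(v))$ and set $g(v)=f(x_v)$; for the remaining vertices I let $g(v)$ be an arbitrary point of $C$. Extending $g$ linearly over each simplex produces a map $g:P\to V$ whose image lies in $C$ by convexity of $C$.

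To verify the estimate, given $x\in X$ write $\pi(x)=\sum_i \lambda_i v_i$ in the barycentric coordinates of the carrying simplex. For every index with $\lambda_i>0$ the point $\pi(x)$ lies in $\mathrm{St}(v_i)$, so both $x$ and $x_{v_i}$ belong to $\pi^{-1}(\mathrm{St}(v_i))\subset \pi^{-1}(B(v_i,\eta))$, a set of diameter less than $\varepsilon$; the hypothesis on $f$ then gives $\norm{f(x)-g(v_i)}<\delta$. Writing $g(\pi(x))-f(x)=\sum_i \lambda_i\bigl(g(v_i)-f(x)\bigr)$ and applying the triangle inequality yields $\norm{g(\pi(x))-f(x)}<\delta$. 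The only genuinely delicate ingredient is the uniform fiber-diameter lemma above; once that is in place, the remainder is a routine combination of subdivision, linear interpolation, and convex-combination estimates.
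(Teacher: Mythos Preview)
Your proof is correct and follows essentially the same route as the paper's: define $g$ on vertices via $f$-values on preimages of open stars, extend linearly, and use that $\pi(x)$ lies in the open star of every vertex of its carrying simplex. The only difference is that you spell out the compactness argument justifying why a sufficiently fine subdivision makes $\diam \pi^{-1}(\mathrm{St}(v))<\varepsilon$, whereas the paper simply asserts this step without proof.
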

\begin{proof}
By subdividing $P$, we can assume $\diam \,\pi^{-1}(O(v)) < \varepsilon$ for all vertices $v$ of $P$.
Here $O(v)$ is the open star of $v$, namely the union of the relative interiors of faces containing $v$.
For each vertex $v\in P$ we define $g(v)$ as follows:
If $\pi^{-1}(O(v))\neq \emptyset$ then we choose a point $x_v\in \pi^{-1}(O(v))$ and set $g(v) = f(x_v)$.
If $\pi^{-1}(O(v))= \emptyset$, then we choose $g(v)\in C$ arbitrarily.
We define a linear map $g:P\to C$ by extending it linearly on every face.

Take $x\in X$, and let $v_0,\dots,v_n$ be the vertices of the face of $P$ containing $\pi(x)$ in its relative interior.
Suppose $\pi(x) = \sum_{i}\lambda_i v_i$ with $0<\lambda_i<1$ and $\sum \lambda_i=1$.
Since $\pi(x) \in O(v_i)$ for each $i$, we get $d(x,x_{v_i})<\varepsilon$.
Hence $\norm{f(x)-f(x_{v_i})} < \delta$. So
\[ \norm{f(x)-g(\pi(x))} = \norm{\sum \lambda_i(f(x)-f(x_{v_i}))} < \delta.\]
\end{proof}

Suppose $(X,d)$ is a compact metric space with a continuous action $T:\mathbb{Z}^k\times X\to X$.
For a subset $\Omega\subset \mathbb{Z}^k$ we define  a new distance $d_\Omega$ on $X$ by
\[ d_\Omega(x,y) = \sup_{n\in \Omega}d(T^nx,T^ny).\]
We define the \textbf{mean dimension} $\mdim(X,\mathbb{Z}^k,T)$ (often abbreviated to $\mdim(X)$) by
\[ \mdim(X,\mathbb{Z}^k,T) = \lim_{\varepsilon\to 0}
   \left(\lim_{N\to \infty}\frac{1}{N^k} \widim_\varepsilon(X,d_{[N]})\right).\]

The limit with respect to $N$ in the above equation exists because of the following sub-additivity and invariance:
\begin{equation*}
   \begin{split}
   \widim_\varepsilon(X,d_{\Omega_1\cup\Omega_2}) &\leq
   \widim_\varepsilon(X,d_{\Omega_1}) + \widim_\varepsilon(X,d_{\Omega_2}), \quad (\Omega_1,\Omega_2\subset \mathbb{Z}^k),\\
   \widim_\varepsilon(X,d_{a+\Omega}) &= \widim_\varepsilon(X,d_\Omega), \quad
   (a\in \mathbb{Z}^k, \Omega\subset \mathbb{Z}^k),
   \end{split}
\end{equation*}
where $a+\Omega$ denotes the set $\{a+n|\, n\in \Omega\}$.
From this property and the standard division argument
\[ \lim_{N\to \infty}\frac{1}{N^k} \widim_\varepsilon(X,d_{[N]})
   = \inf_{N\geq 1}\frac{1}{N^k} \widim_\varepsilon(X,d_{[N]}).\]
Note that the value of $\mdim(X)$ is independent of the choice of a distance $d$ compatible with the topology, hence mean dimension is a topological invariant.

\begin{example}
Let $X=([0,1]^D)^{\mathbb{Z}^k}$ with the standard shift action of $\mathbb{Z}^k$.
Then its mean dimension is $D$.
\end{example}

In Section \ref{section: proof of embedding and symbolic system} we use more general F{\o}lner sequences.
For a subset $\Omega \subset \mathbb{Z}^k$ and $R>0$ we define
$\partial^{\mathbb{Z}}_R\Omega$ as the set of $n\in \mathbb{Z}^k$ such that there exist $m\in \Omega$ and
$m'\in \mathbb{Z}^k\setminus \Omega$ satisfying $|n-m|\leq R$ and $|n-m'|\leq R$.
We set $\mathrm{int}^{\mathbb{Z}}_R\Omega = \Omega\setminus \partial^{\mathbb{Z}}_R\Omega$.
Here the symbol $\mathbb{Z}$ indicates that these are defined for subsets of $\mathbb{Z}^k$.
In later sections we will consider $\partial$ and $\mathrm{int}$ for subsets of $\mathbb{R}^k$.
A sequence $\{\Omega_n\}_{n\geq1}$ of finite subsets of $\mathbb{Z}^k$ is called a \textbf{F{\o}lner sequence}
if for every $R>0$
\[ \lim_{n\to \infty} \frac{|\partial^{\mathbb{Z}}_R\Omega_n|}{|\Omega_n|} =0.\]
For example the sequence $\{[n]\}_{n\geq 1}$ is F{\o}lner.
We use the following lemma
(Gromov \cite[p. 336]{Gromov} and Lindenstrauss--Weiss \cite[Appendix]{Lindenstrauss--Weiss}).
This lemma is originally due to Ornstein--Weiss \cite[Chapter I, Sections 2 and 3]{Ornstein--Weiss}
and holds for general amenable groups.
\begin{lemma} \label{lemma: Ornstein--Weiss}
Let $h: \{\text{finite subsets of }\,\mathbb{Z}^k\}\to \mathbb{R}$ be a nonnegative function satisfying
the following three conditions.
\begin{itemize}
  \item   If $\Omega_1\subset \Omega_2$, then $h(\Omega_1)\leq h(\Omega_2)$.
  \item  $h(\Omega_1\cup \Omega_2) \leq h(\Omega_1)+h(\Omega_2)$.
  \item   For any $a\in \mathbb{Z}^k$ and any finite subset $\Omega \subset \mathbb{Z}^k$, we have
      $h(a+\Omega)=h(\Omega)$.
\end{itemize}
Then for any F{\o}lner sequence $\{\Omega_n\}_{n\geq 1}$ in $\mathbb{Z}^k$, the limit of the sequence
\[ \frac{h(\Omega_n)}{|\Omega_n|} \quad (n\geq 1) \]
exists and is independent of the choice of a F{\o}lner sequence.
\end{lemma}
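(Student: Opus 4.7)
The plan is to show that for every F{\o}lner sequence $\{\Omega_n\}_{n\geq 1}$ the ratio $h(\Omega_n)/|\Omega_n|$ converges to the quantity $\lambda := \inf_{N \geq 1} h([N])/N^k$. Since $\lambda$ is defined purely in terms of cubes, convergence to $\lambda$ immediately yields both existence of the limit and its independence from the choice of F{\o}lner sequence. Note that $\lambda < \infty$: iterated subadditivity and invariance on $\Omega = \bigcup_{n \in \Omega}\{n\}$ give $h(\Omega) \leq |\Omega|\cdot h(\{0\})$, so the ratio $h(\Omega)/|\Omega|$ is uniformly bounded.

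First I would establish $\lim_{N \to \infty} h([N])/N^k = \lambda$. For any $M \geq 1$, write $N = qM + r$ with $0 \leq r < M$. The cube $[qM]$ is a disjoint union of $q^k$ translates of $[M]$, while $[N] \setminus [qM]$ has cardinality at most $C(k)\,M N^{k-1}$. Subadditivity, invariance, and the pointwise bound above give
\[ h([N]) \leq q^k\, h([M]) + C(k)\,M N^{k-1}\,h(\{0\}). \]
Dividing by $N^k$ and sending $N \to \infty$ for fixed $M$ yields $\limsup_N h([N])/N^k \leq h([M])/M^k$; taking the infimum over $M$ gives $\limsup_N h([N])/N^k \leq \lambda$. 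Combined with the trivial bound $h([N])/N^k \geq \lambda$, the limit equals $\lambda$.

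For the upper bound $\limsup_n h(\Omega_n)/|\Omega_n| \leq \lambda$, fix $M \geq 1$ and partition $\mathbb{Z}^k = \bigsqcup_{p \in \mathbb{Z}^k}(Mp + [M])$. Let $S_n := \{p \in \mathbb{Z}^k : (Mp + [M]) \cap \Omega_n \neq \emptyset\}$. By subadditivity, invariance, and monotonicity,
\[ h(\Omega_n) \leq \sum_{p \in S_n} h\bigl(\Omega_n \cap (Mp + [M])\bigr) \leq |S_n| \cdot h([M]). \]
Splitting $S_n$ into tiles fully inside $\Omega_n$ and tiles crossing its boundary, one sees $|S_n| \leq |\Omega_n|/M^k + |\partial^{\mathbb{Z}}_M \Omega_n|$, and the second term is $o(|\Omega_n|)$ by the F{\o}lner condition. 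Hence $\limsup_n h(\Omega_n)/|\Omega_n| \leq h([M])/M^k$, and sending $M \to \infty$ gives $\limsup_n h(\Omega_n)/|\Omega_n| \leq \lambda$.

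The main obstacle is the matching lower bound $\liminf_n h(\Omega_n)/|\Omega_n| \geq \lambda$, which requires reversing the tiling direction: one must cover a large cube by translates of $\Omega_n$ rather than the reverse. Here I would carry out an $\varepsilon$-quasi-tiling in the spirit of Ornstein--Weiss. Fix $\varepsilon > 0$; for $n$ large enough that $|\partial^{\mathbb{Z}}_R \Omega_n|/|\Omega_n| < \varepsilon$ with $R = \diam \Omega_n$, and for $L \gg R$, a greedy selection of translation parameters inside $\mathrm{int}^{\mathbb{Z}}_R [L]$ produces pairwise disjoint translates $\Omega_n + p_1, \dots, \Omega_n + p_K \subset [L]$ whose union covers all but an $\varepsilon$-fraction of $[L]$. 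Covering the leftover points individually and applying subadditivity,
\[ h([L]) \leq K\,h(\Omega_n) + \varepsilon L^k\,h(\{0\}) \leq \frac{L^k}{|\Omega_n|}\,h(\Omega_n) + \varepsilon L^k\,h(\{0\}). \]
Dividing by $L^k$ and letting $L \to \infty$ yields $\lambda \leq h(\Omega_n)/|\Omega_n| + \varepsilon h(\{0\})$ for all sufficiently large $n$, so $\liminf_n h(\Omega_n)/|\Omega_n| \geq \lambda - \varepsilon h(\{0\})$; since $\varepsilon$ was arbitrary, $\liminf_n h(\Omega_n)/|\Omega_n| \geq \lambda$. The delicate point in the greedy construction is simultaneously controlling the F{\o}lner defect of $\Omega_n$ and the boundary loss near $\partial^{\mathbb{Z}} [L]$ so that the uncovered density can be made small uniformly in the two parameters; this geometric packing is the main content of the lemma.
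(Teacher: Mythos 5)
Your reduction to the value $\lambda=\inf_N h([N])/N^k$, the identification $\lim_N h([N])/N^k=\lambda$, and the upper bound $\limsup_n h(\Omega_n)/|\Omega_n|\leq\lambda$ via the partition of $\Z^k$ into $M$-cubes are all fine (with the paper's Euclidean definition of $\partial^{\Z}_R$ you should count the crossing cubes against $\partial^{\Z}_{\sqrt{k}M}\Omega_n$ rather than $\partial^{\Z}_M\Omega_n$, but this is harmless). The genuine gap is in the lower bound, and it is exactly where you locate ``the main content of the lemma''. The asserted greedy construction --- pairwise disjoint translates $\Omega_n+p_1,\dots,\Omega_n+p_K\subset[L]$ covering all but an $\varepsilon$-fraction of $[L]$ --- does not exist in general. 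Take $\Omega_n=B_{R_n}\cap\Z^k$ with $R_n\to\infty$, the Følner sequence of balls that this paper itself uses repeatedly. For $k\geq 2$, disjointness of two translates forces the centers to be at distance at least $2(R_n-\sqrt{k})$, so the Euclidean balls of radius $R_n-\sqrt{k}$ about the $p_i$ are pairwise disjoint; hence the covered fraction of $[L]$ is bounded by the sphere-packing density of $\R^k$ plus $o(1)$, which is strictly less than $1$ (about $0.9069$ for $k=2$), uniformly in $R_n$ and $L$. So for small $\varepsilon$ no choice of centers works, the inequality $h([L])\leq K\,h(\Omega_n)+\varepsilon L^k h(\{0\})$ with $K\leq L^k/|\Omega_n|$ is unavailable, and $\liminf_n h(\Omega_n)/|\Omega_n|\geq\lambda$ is not established. (A secondary slip in the same step: you require $|\partial^{\Z}_R\Omega_n|/|\Omega_n|<\varepsilon$ with $R=\diam\Omega_n$; for $R\geq\diam\Omega_n$ one has $\partial^{\Z}_R\Omega_n\supset\Omega_n$, so this can never hold --- the Følner condition only controls each fixed $R$ as $n\to\infty$.)

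This failure is not a technicality: a single Følner set, however invariant, need not admit an efficient (near multiplicity one) covering of a cube by its translates --- for balls the packing and covering densities differ from $1$ by definite constants --- and this is precisely why the Ornstein--Weiss machinery is needed. Note that the paper does not prove the lemma at all; it cites Ornstein--Weiss, Gromov, and the appendix of Lindenstrauss--Weiss. The argument given there does not go through a fixed cube for the lower bound. Instead, one shows directly that for any two Følner sequences $\{F_n\}$ and $\{G_m\}$ one has $\limsup_m h(G_m)/|G_m|\leq\liminf_n h(F_n)/|F_n|+O(\varepsilon)$: choose finitely many sets $F_{n_1},\dots,F_{n_j}$ from the first sequence, at widely separated scales and with ratios $h(F_{n_i})/|F_{n_i}|$ within $\varepsilon$ of the liminf, such that every sufficiently invariant set can be $\varepsilon$-quasi-tiled by $\varepsilon$-disjoint translates of the $F_{n_i}$ covering a $(1-\varepsilon)$-fraction; subadditivity, monotonicity and invariance then give the inequality, and applying it to the sequences in both orders (and to a single sequence against itself) yields existence of the limit, its independence of the sequence, and its identification with $\lambda$ since cubes are Følner. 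The multi-scale, $\varepsilon$-disjoint quasi-tiling is the missing ingredient your proof would need to supply; the single-shape disjoint packing you propose cannot replace it.
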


For example the function $h(\Omega) = \widim_\varepsilon(X,d_{\Omega})$ (defined for a dynamical system $(X,\mathbb{Z}^k,T)$)
satisfies the above three conditions.
So for any F{\o}lner sequence $\{\Omega_n\}_{n\geq 1}$ in $\mathbb{Z}^k$ we have
\begin{equation} \label{eq: mean dimension by Folner sequence}
 \mdim(X) = \lim_{\varepsilon\to 0}\left(\lim_{n\to \infty}\frac{\widim_\varepsilon(X,d_{\Omega_n})}{|\Omega_n|}\right).
\end{equation}

\section{An embedding theorem: proof of Theorem \ref{thm: embedding for extensions of symbolic systems}}
\label{section: proof of embedding and symbolic system}

In this section we prove Theorem \ref{thm: embedding for extensions of symbolic systems}.
Fix a positive integer $D$.
Let $(Z,\mathbb{Z}^k,S)$ be an aperiodic \textit{zero dimensional} system.
Here $\dim Z=0$ means that clopen (closed and open) subsets form an open basis.
Symbolic systems are zero dimensional.
Let $\pi:X\to Z$ be an extension. Namely $(X,\mathbb{Z}^k,T)$ is a dynamical system with a $\mathbb{Z}^k$-equivariant
continuous surjection $\pi$.
Let $C(X,[0,1]^D)$ be the space of continuous maps $f:X\to [0,1]^D$ with the uniform norm topology.
For $f:X\to [0,1]^D$ we define the map $I_f:X\to ([0,1]^D)^{\mathbb{Z}^k}$ by $I_f(x)=(f(T^nx))_{n\in \mathbb{Z}^k}$.

\begin{theorem} \label{thm: embedding of zero dimensional extension}
If $\mdim(X)<D/2$ then for a dense $G_\delta$ subset of $f\in C(X,[0,1]^D)$ the map
\[ (I_f,\pi):X\to ([0,1]^D)^{\mathbb{Z}^k}\times Z, \quad x\mapsto (I_f(x),\pi(x))\]
is an embedding.
\end{theorem}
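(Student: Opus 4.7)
The plan is to follow the three-step strategy (find good orbit pieces, approximate by an $\varepsilon$-embedding, paint it back) outlined in the introduction, using the Voronoi tiling idea to carry out step (1). The zero-dimensionality of $Z$ will make the Voronoi tiles depend locally constantly on $x$, so no "adding one dimension" trick is needed here.

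First I would reduce the theorem to a density statement via a standard Baire category argument. The set
\[ \{f\in C(X,[0,1]^D)\mid (I_f,\pi) \text{ is injective}\} \]
is a countable intersection of the open sets
\[ G_{A,B}=\{f\mid (I_f,\pi)(A)\cap (I_f,\pi)(B)=\emptyset\},\]
taken over a countable family of pairs $(A,B)$ of disjoint closed subsets of $X$ that separate points. So it suffices to show that each $G_{A,B}$ is dense. Fix $f_0\in C(X,[0,1]^D)$, $\delta>0$, and disjoint closed $A,B\subset X$; set $\eta=\dist(A,B)>0$ and pick $\varepsilon\in(0,\eta)$. We must produce $g$ with $\norm{g-f_0}_\infty<\delta$ and $(I_g,\pi)(A)\cap (I_g,\pi)(B)=\emptyset$.

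Next I would build the Voronoi tiles. Because $Z$ is zero-dimensional and aperiodic, for any prescribed large $L$ one can select a clopen set $V\subset Z$ with $V\cap S^nV=\emptyset$ for $0<|n|_\infty\le L$ and $Z=\bigcup_{n\in\Z^k}S^nV$ (the clopen analogue of a Rokhlin marker, standard for zero-dimensional free systems). Let $U=\pi^{-1}(V)$, which is clopen in $X$, and define
\[ C(x)=\{n\in\Z^k\mid T^nx\in U\}=\{n\in\Z^k\mid S^n\pi(x)\in V\}. \]
The set $C(x)$ depends only on $\pi(x)$, is $L$-separated and relatively dense. Let $\R^k=\bigsqcup_{n\in C(x)}V(x,n)$ be the associated Voronoi decomposition and $W(x,n)=V(x,n)\cap\Z^k$. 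Crucially, since $V$ is clopen, the map $x\mapsto C(x)\cap[-M,M]^k$ is locally constant for every $M$, so the shape and position of every tile $V(x,n)$ near the origin is a locally constant function of $x$. In particular the collection of tile-shapes that actually occur is finite, and each occurs on a clopen subset of $X$.

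Then I would invoke mean dimension and paint. By $\mdim(X)<D/2$ and the Ornstein--Weiss lemma applied to the F\o lner sequence of cubes (or better, of tile interiors), for $L$ and hence $N=|W(x,0)|$ large enough and for a suitable $\varepsilon_0\ll\varepsilon$ we have $\widim_{\varepsilon_0}(X,d_{[N]})<\tfrac{D}{2}N^k$. A general-position / linear-approximation argument (Lemma~\ref{lemma: approximation by linear map} together with the standard embedding of simplicial complexes of dimension $<DN^k/2$ into $[0,1]^{DN^k}$) then yields, for each tile shape $W$, a continuous map $F_W:X\to ([0,1]^D)^W$ that is an $\varepsilon$-embedding with respect to $d_W$ and satisfies $\norm{F_W(x)-(f_0(T^nx))_{n\in W}}_\infty<\delta/2$. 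Define $g$ by picking for each $x$ the unique tile $W(x,n_x)\ni 0$ and, using a clopen cutoff supported on $\{y:T^{-n_x}y$ lies in this tile's good piece$\}$, overwriting the values $(f_0(T^{m}x))_{m\in W(x,n_x)}$ by the corresponding coordinates of $F_{W(x,n_x)}(T^{-n_x}x)$. Local constancy of the tiling makes $g$ continuous and keeps $\norm{g-f_0}_\infty<\delta$. If $a\in A$, $b\in B$ satisfied $(I_g,\pi)(a)=(I_g,\pi)(b)$, then $\pi(a)=\pi(b)$ forces identical tilings, the blocks $g(T^nx)$ for $n\in W(\cdot,0)$ agree, i.e.\ $F_W(a)=F_W(b)$, so the $\varepsilon$-embedding property gives $d(a,b)<\varepsilon<\eta$, a contradiction.

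The main obstacle is producing a \emph{continuous} family of good orbit-pieces: the Voronoi tile containing $0$ is not a continuous function of $x$ in general. The whole point of assuming an aperiodic zero-dimensional factor is that taking $U$ as the preimage of a clopen marker set $V\subset Z$ makes the tile-function locally constant and takes only finitely many values, which reduces the construction of the perturbation $g$ to finitely many independent clopen pieces on each of which the painting argument is essentially Jaworski's.
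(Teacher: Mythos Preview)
Your strategy is the paper's: clopen markers in $Z$ give a Voronoi tiling of $\mathbb{Z}^k$ that depends only on $\pi(x)$ and is locally constant, then paint an $\varepsilon$-embedding into each tile. Your Baire reduction via pairs of closed sets is equivalent to the paper's reduction to producing $\delta$-embeddings $(I_g,\pi)$, and your remark that only finitely many tile shapes occur (by compactness of $Z$ and clopenness of $V$) is correct and mildly simplifies the paper's enumeration of a countable list of shapes.

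Two places need more care than you give them. First, the origin may lie on the common boundary of several tiles, so ``the unique tile $W(x,n_x)\ni 0$'' is not always defined; the paper paints only on $\mathrm{int}_1^{\mathbb{Z}}W(x,n)$ and sets $g=f$ otherwise, which also makes the equivariance check in Claim~\ref{claim: description of I_g zero dimensional case} clean. Second, and more substantively, you invoke $\widim_{\varepsilon_0}(X,d_{[N]})<\tfrac{D}{2}N^k$ for \emph{cubes} but then need an $\varepsilon$-embedding $F_W$ into $([0,1]^D)^W$ for each \emph{tile shape} $W$, which requires $\widim_\varepsilon(X,d_W)<\tfrac{D}{2}|\mathrm{int}_1^{\mathbb{Z}}W|$. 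This follows from $\mdim(X)<D/2$ only after one shows the Voronoi tiles are uniformly F{\o}lner as $L\to\infty$; this is a separate geometric lemma (Lemma~\ref{lemma: voronoi cells are large}) that the paper proves by a scaling argument using that each tile contains a ball of radius $L/2$. Your parenthetical ``or better, of tile interiors'' shows you see the issue but does not address it.
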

Theorem \ref{thm: embedding for extensions of symbolic systems} in the introduction
follows from this theorem.
\begin{proof}[Proof of Theorem \ref{thm: embedding for extensions of symbolic systems}]
Suppose $Z$ is an aperiodic subsystem of \[\{1,2,\dots,l\}^{\mathbb{Z}^k}\] and $X$ is its extension with $\mdim(X)<D/2$.
We can \textit{topologically} embed the \textit{space} $[0,1]^D\times \{1,2,\dots,l\}$ into $[0,1]^D$.
Hence we can \textit{dynamically} embed
the \textit{system} $([0,1]^D)^{\mathbb{Z}^k}\times \{1,2,\dots,l\}^{\mathbb{Z}^k}$ into
$([0,1]^D)^{\mathbb{Z}^k}$. So we can also embed $([0,1]^D)^{\mathbb{Z}^k}\times Z$ into $([0,1]^D)^{\mathbb{Z}^k}$.
Then Theorem \ref{thm: embedding of zero dimensional extension} implies that
we can embed $X$ into the system $([0,1]^D)^{\mathbb{Z}^k}$ .
\end{proof}

In the rest of this section we always assume that $Z$ is an aperiodic zero dimensional system
and $\pi:X\to Z$ is an extension with $\mdim(X)<D/2$.
We set $K=[0,1]^D$ for simplicity of the notation.
Let $d$ be a distance on $X$.
For each $\delta>0$ the set
\[ \left\{f\in C(X,K)\middle|  \text{\parbox{2.9in}{\centering $(I_f,\pi):X\to K^{\mathbb{Z}^k}\times Z$ is a $\delta$-embedding with respect to $d$}}\right\}\]
is open in $C(X,K)$.
Consider the $G_\delta$ subset
\[ \bigcap_{n\geq 1}
  \left\{f\in C(X,K)\middle| \text{\parbox{2.9in}{\centering $(I_f,\pi):X\to K^{\mathbb{Z}^k}\times Z$ is a $\tfrac1n$-embedding with respect to $d$}}\right\}.\]
This is equal to the set of $f\in C(X,K)$ such that $(I_f,\pi)$ is an embedding.
Therefore Theorem \ref{thm: embedding of zero dimensional extension} follows from the next proposition.
\begin{proposition} \label{prop: main proposition for embedding of zero dimensional extension}
Let $f:X\to K$ be a continuous map.
For any $\delta>0$ there exists a continuous map $g:X\to K$ satisfying
\begin{enumerate}
\item $|f(x)-g(x)|<\delta$ for all $x\in X$,
\item $(I_g,\pi):X\to K^{\mathbb{Z}^k}\times Z$ is a $\delta$-embedding with respect to the distance~$d$.
\end{enumerate}
\end{proposition}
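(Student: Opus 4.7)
The plan is to implement the three-step Jaworski-style scheme outlined in Section~\ref{subsection: main ideas}: use the zero-dimensionality of $Z$ in step~(1) to partition every orbit continuously into ``good pieces'' via a Voronoi tiling, apply the mean dimension hypothesis $\mdim(X)<D/2$ in step~(2) to embed those pieces into a Euclidean cube of twice that dimension, and paint the embedding back onto the orbit in step~(3). The hard part is step~(1): producing a clopen marker whose associated Voronoi tiling varies locally constantly in $x$ and whose tiles are simultaneously large (so the mean dimension inequality leaves room for a general-position embedding) and bounded (so that $g$-values on the orbit suffice to recover a point). Once the tiling is in hand, steps~(2) and~(3) are direct adaptations of the argument used in Jaworski's theorem.

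For step~(1), given a large integer $M$ to be chosen, aperiodicity and zero-dimensionality of $Z$ give a clopen set $U\subset Z$ with $Z=\bigcup_{n\in\mathbb{Z}^k}S^nU$ and $U\cap S^nU=\emptyset$ for $0<\|n\|_\infty\leq M$; by compactness the first return times to $U$ are bounded by some $R=R(M,U)$. Set $C(z):=\{n\in\mathbb{Z}^k : S^nz\in U\}$ and let $\{V(z,n)\}_{n\in C(z)}$ be its Voronoi tiling of $\mathbb{R}^k$. Because $U$ is clopen and return times bounded, $C(z)$ restricted to any bounded window is a locally constant function of $z$, so the tiling (restricted to a bounded region) has only finitely many shapes $V_1,\dots,V_s$ and the cell containing the origin depends locally constantly on $z$, hence on $x\in\pi^{-1}(z)$. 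Each $V_i$ contains a cube of side $M$ and is contained in a cube of side $R$. Write $V_i^*:=V_i\cap\mathbb{Z}^k$, $\tilde U:=\pi^{-1}(U)$, and $\tilde U_{V_i}:=\{x\in\tilde U: V(\pi(x),0)=V_i\}$, a clopen subset of $\tilde U$.

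For step~(2), choose $\varepsilon>0$ small enough that $d(T^na,T^nb)<\varepsilon\Rightarrow d(a,b)<\delta$ for all $\|n\|_\infty\leq R$, and small enough for Lemma~\ref{lemma: approximation by linear map} to yield $\delta/2$-approximations. Using $\mdim(X)<D/2$ and the F{\o}lner formulation~\eqref{eq: mean dimension by Folner sequence} applied to cubes of side $M$, take $M$ so large that $\widim_\varepsilon(X,d_\Omega)<(D/2)|\Omega|$ for every $\Omega\subset\mathbb{Z}^k$ sandwiched between cubes of side $M$ and $R$; in particular for each $V_i^*$. Pick an $\varepsilon$-embedding $\phi_i:X\to P_i$ with respect to $d_{V_i^*}$ into a simplicial complex of dimension $<(D/2)|V_i^*|$. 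Applying Lemma~\ref{lemma: approximation by linear map} to the continuous map $x\mapsto(f(T^mx))_{m\in V_i^*}$ produces a linear $G_i:P_i\to K^{V_i^*}$ with $\norm{G_i(\phi_i(x))-(f(T^mx))_m} <\delta/2$; since $2\dim P_i+1\leq D|V_i^*|=\dim K^{V_i^*}$, a further arbitrarily small general-position perturbation makes $G_i$ a topological embedding $\psi_i:P_i\hookrightarrow K^{V_i^*}$ still $\delta/2$-close to $(f(T^m\cdot))_m$.

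For step~(3), I would define $g$ by painting. For $y\in X$ let $n(y)\in C(\pi(y))$ be the element closest to $0$, ties broken by a fixed rule compatible with the clopen decomposition; then $x(y):=T^{n(y)}y$ lies in a unique $\tilde U_{V_i}$ with $-n(y)\in V_i^*$, and I set $g(y):=\psi_i(\phi_i(x(y)))_{-n(y)}\in K$. Local constancy of the tiling and clopenness of the $\tilde U_{V_i}$ make $n(\cdot)$ and $i(\cdot)$ locally constant, hence $g$ is continuous; the bound $|f(y)-g(y)|<\delta$ follows from the $\delta/2$-approximation in step~(2). For the $\delta$-embedding property of $(I_g,\pi)$: if $\pi(y)=\pi(y')$ and $I_g(y)=I_g(y')$, the tilings match, $n(y)=n(y')$, the anchors $x(y),x(y')$ lie in the same $\tilde U_{V_i}$, and reading off the $V_i^*$-coordinates of $I_g$ yields $\psi_i(\phi_i(x(y)))=\psi_i(\phi_i(x(y')))$; injectivity of $\psi_i$ and the $\varepsilon$-embedding property of $\phi_i$ give $d_{V_i^*}(x(y),x(y'))<\varepsilon$, and uniform continuity of $T^{-n(y)}$ for bounded exponents upgrades this to $d(y,y')<\delta$.
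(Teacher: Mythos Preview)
Your overall approach matches the paper's: Voronoi tile each orbit using a clopen marker in the zero-dimensional factor $Z$, then paint a simplicial embedding into each tile. The execution, however, has two real gaps.

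First, the claim that ``reading off the $V_i^*$-coordinates of $I_g$ yields $\psi_i(\phi_i(x(y)))$'' fails at lattice points $m$ lying on the boundary of $V_i$: your tie-breaking rule may assign $T^m x(y)$ to a \emph{neighbouring} tile, so $g(T^m x(y))$ is painted by some other $\psi_j$, not by $\psi_i$. Thus $I_g(x(y))|_{V_i^*}\neq\psi_i(\phi_i(x(y)))$ in general, and you cannot invoke injectivity of $\psi_i$ on all of $K^{V_i^*}$. The paper avoids this by painting only on $\mathrm{int}_1^{\mathbb{Z}}V^{\mathbb{Z}}(\pi(x),n)$ and leaving $g=f$ on the boundary layer; correspondingly it needs the stronger bound $\widim_\varepsilon(X,d_{\Omega_i})<(D/2)\,|\mathrm{int}_1^{\mathbb{Z}}\Omega_i|$ so that an embedding $g_i:P_i\hookrightarrow K^{\mathrm{int}_1^{\mathbb{Z}}\Omega_i}$ exists. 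Second, the assertion ``$\widim_\varepsilon(X,d_\Omega)<(D/2)|\Omega|$ for every $\Omega$ sandwiched between cubes of side $M$ and $R$'' does not follow from $\mdim(X)<D/2$: such an $\Omega$ need not be F{\o}lner, and the Ornstein--Weiss limit only controls $\widim_\varepsilon/|\Omega|$ along F{\o}lner sets. What you actually need is that the Voronoi cells themselves are F{\o}lner; this is Lemma~\ref{lemma: voronoi cells are large}, which uses the convexity of $V(x,n)$ and the inscribed ball $B_{L/2}(n)$ to bound $|\partial_R^{\mathbb{Z}}V^{\mathbb{Z}}|/|V^{\mathbb{Z}}|$. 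The paper therefore fixes $\varepsilon<\delta$ first (from uniform continuity of $f$), then a F{\o}lner threshold $R$ from the mean-dimension inequality, and only then the marker separation $L$ so that all cells are $R$-F{\o}lner. Your ordering (choose $M$, then $R$, then $\varepsilon$ depending on $R$, then $M$ again) is circular; note also that your auxiliary condition $d(T^na,T^nb)<\varepsilon\Rightarrow d(a,b)<\delta$ is unnecessary, since $-n(y)\in V_i^*$ already gives $d(y,y')\leq d_{V_i^*}(x(y),x(y'))<\varepsilon$ directly.
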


We recall the following well-known fact about simplicial complexes:
\begin{lemma} \label{lemma: embedding of simplicial complex}
Let $n$ be a positive integer, and $P$ a simplicial complex with $\dim P < n/2$.
Then almost every linear map $\varphi:P\to [0,1]^n$ is a topological embedding.
\end{lemma}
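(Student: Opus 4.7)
The plan is to parametrize the space of linear maps $\varphi: P \to [0,1]^n$ by their restrictions to the vertex set $V$ of $P$, identifying this parameter space with $([0,1]^n)^{|V|}$ and equipping it with Lebesgue measure; this makes precise what ``almost every'' means. Since $P$ is compact and $[0,1]^n$ is Hausdorff, any continuous injective map into $[0,1]^n$ is automatically a topological embedding, so it suffices to show that the linear maps which fail to be injective form a Lebesgue null subset of $([0,1]^n)^{|V|}$.

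Next I would reduce injectivity of $\varphi$ to a finite combinatorial condition. For every pair of simplices $\sigma, \tau$ of $P$, let $W(\sigma,\tau) \subset V$ denote the vertices contained in $\sigma \cup \tau$; since $P$ is a simplicial complex, the intersection $\sigma \cap \tau$ is a common face whose vertex set equals $V(\sigma) \cap V(\tau)$, and $|W(\sigma,\tau)| \leq (\dim \sigma + 1) + (\dim \tau + 1) \leq 2\dim P + 2$. I claim that $\varphi$ is injective on $P$ if and only if, for every such pair $(\sigma, \tau)$, the images $\{\varphi(v)\}_{v \in W(\sigma,\tau)}$ are affinely independent in $\mathbb{R}^n$. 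Necessity is immediate. For sufficiency, affine independence of the images lets one regard $W(\sigma,\tau)$ as the vertex set of an abstract simplex that embeds into $\mathbb{R}^n$ via $\varphi$; inside this embedded simplex, $\sigma$ and $\tau$ sit as subfaces whose intersection is the subface on $V(\sigma) \cap V(\tau) = V(\sigma \cap \tau)$, so $\varphi$ is injective on $\sigma \cup \tau$. Since $P$ has only finitely many simplices, running over all pairs yields global injectivity.

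Finally I would invoke the dimension hypothesis. From $\dim P < n/2$ and the integrality of $\dim P$ one obtains $2\dim P + 2 \leq n+1$, so every $W(\sigma,\tau)$ has cardinality at most $n+1$. For any fixed tuple of $k+1 \leq n+1$ vertices, the set of $(y_0,\ldots,y_k) \in (\mathbb{R}^n)^{k+1}$ for which $y_0,\ldots,y_k$ are affinely dependent is the zero locus of a non-trivial polynomial (a maximal minor of the matrix with rows $y_i - y_0$), and hence has Lebesgue measure zero. The ``bad'' locus in $([0,1]^n)^{|V|}$ is a finite union, over pairs $(\sigma,\tau)$, of the sets cutting out affine dependence of the corresponding coordinates, so it has measure zero as well. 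The only step that demands any care is the combinatorial reduction, but the standard simplicial-complex identity $V(\sigma \cap \tau) = V(\sigma) \cap V(\tau)$ makes it routine; I anticipate no substantive obstacle.
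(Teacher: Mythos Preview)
Your approach is essentially the same as the paper's: parametrize linear maps by the images of the vertices, use $2\dim P + 2 \leq n+1$, and observe that affine independence of the relevant vertex images (a polynomial condition) holds almost surely and forces injectivity. The paper is slightly terser, simply asserting that almost surely every $(n{+}1)$-tuple of vertex images is affinely independent and that this suffices for an embedding; your pairwise analysis over $(\sigma,\tau)$ makes the same point with more detail.

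One small slip: the ``if and only if'' is false in the necessity direction. With $n=3$ and $P$ consisting of two disjoint edges, sending the four vertices to the collinear points $(0,0,0),(1,0,0),(2,0,0),(3,0,0)$ gives an injective linear map whose images on $W(\sigma,\tau)$ are affinely dependent. This is harmless for the argument, since you only invoke (and correctly prove) the sufficiency direction; just drop the ``only if''.
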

\begin{proof}
Let $v_1,\dots,v_s$ be the vertices of $P$.
The space of linear maps $f:P\to [0,1]^n$ is identified with
\[ \left([0,1]^n\right)^{\{v_1,\dots,v_s\}} \]
which is endowed with the standard Lebesgue measure.
The above ``almost every'' is defined with respect to this measure.
For almost every choice of vectors $u_1,\dots,u_s\in [0,1]^n$,
all $(n+1)$-tuples $u_{i_1},\dots,u_{i_{n+1}}$ $(i_1<\dots<i_{n+1})$ are affinely independent.
Then for such a choice of $u_1,\dots,u_s$ the linear map $\varphi:P\to [0,1]^n$ defined by $\varphi(v_i)=u_i$
becomes a topological embedding because $2\dim P+2 \leq n+1$.
\end{proof}

The following lemma established in particular that an aperiodic zero-dimensional system has the marker property (cf.~Definition \ref{def: marker}). Note however that the set $U$ in the lemma below is not just an open set as in Definition~\ref{def: marker} but clopen, which greatly simplifies the subsequent constructions.

\begin{lemma} \label{lemma: clopen marker}
For any $L>0$ there exists a clopen subset $U\subset Z$ such that
\begin{equation} \label{eq: clopen marker}
    Z=\bigcup_{|n|<L}S^n U
\end{equation}
and $U\cap S^nU=\emptyset$ for all non-zero $n\in \mathbb{Z}^k$ with $|n|<L$.
\end{lemma}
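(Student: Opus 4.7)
The plan is to build $U$ by first constructing local clopen markers pointwise, then performing an inductive disjointification against the $B_L$-orbit that preserves the covering property. Set $B_L = \{n \in \Z^k : |n| < L\}$, a finite symmetric set containing $0$. For each $z \in Z$, aperiodicity of $S$ implies that the finitely many points $\{S^n z : n \in B_L\}$ are all distinct; since $Z$ is Hausdorff with a basis of clopen sets, I can separate them by pairwise disjoint clopen sets and then intersect the one containing $z$ with a sufficiently small clopen neighborhood of $z$ to obtain a clopen $V_z \ni z$ with $V_z \cap S^n V_z = \emptyset$ for every nonzero $n \in B_L$. By compactness of $Z$, finitely many such sets $V_1,\dots,V_m$ cover $Z$.

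Now define clopen sets $U_i$ inductively by $U_1 = V_1$ and
\[
  U_i \;=\; V_i \setminus \bigcup_{n \in B_L} S^n\bigl(U_1 \cup \cdots \cup U_{i-1}\bigr),
\]
and set $U = U_1 \cup \cdots \cup U_m$. Each $U_i$ is clopen because the subtracted set is a finite union of shifts of clopen sets, and therefore so is $U$.

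To verify the two required properties, first take $x \in Z$ and let $i$ be minimal with $x \in V_i$; then either $x \in U_i \subset U$, or $x$ lies in the subtracted set, so $x = S^n y$ with $y \in U_1 \cup \cdots \cup U_{i-1} \subset U$ and $n \in B_L$, giving $x \in S^n U$ in either case. For disjointness, suppose $y \in U_i \cap S^n U_j$ with $0 \neq n \in B_L$. If $i = j$ this puts $y$ in $V_i \cap S^n V_i$, contradicting the construction of $V_i$. If $i < j$, then $S^{-n} y \in U_i \subset U_1 \cup \cdots \cup U_{j-1}$, so $y \in S^n(U_1 \cup \cdots \cup U_{j-1})$ with $n \in B_L$ lies in the set subtracted from $V_j$, contradicting $y \in U_j$; the case $i > j$ is symmetric because $B_L = -B_L$. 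The only real subtlety is in the second paragraph: a naive pairwise disjointification of the $V_i$ would destroy the covering property once one applies $S^n$, so the essential trick is to subtract exactly the $B_L$-orbit of what has already been placed in $U$, which is just enough to forbid nontrivial small returns while still ensuring every point of $Z$ is either in $U$ or reachable from $U$ by a single shift in $B_L$.
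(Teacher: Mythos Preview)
Your proof is correct and follows essentially the same construction as the paper's: build a finite clopen cover $\{V_i\}$ with $V_i \cap S^n V_i = \emptyset$ for $0 < |n| < L$, then inductively disjointify by subtracting the $B_L$-orbit of the previously constructed pieces (the paper writes the induction as a growing union $U_{i+1} = U_i \cup (V_{i+1}\setminus \bigcup_{|n|<L} S^n U_i)$, which yields the same final set). There is a small slip in your disjointness argument for $i < j$: from $y \in U_i \cap S^n U_j$ you have $y \in U_i$ and $S^{-n}y \in U_j$, not the reverse, so the contradiction is that $S^{-n}y \in S^{-n}(U_1 \cup \cdots \cup U_{j-1})$ lies in the set subtracted from $V_j$, against $S^{-n}y \in U_j$.
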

\begin{proof}
This lemma is close to the argument of Lightwood \cite[Section 4]{Lightwood 1}.
From $\dim Z=0$, there exists a clopen covering $\{V_1,\dots,V_a\}$ of $Z$ such that
\[ V_i\cap S^nV_i = \emptyset \quad (\forall 1\leq i\leq a, 0<|n|<L).\]
We inductively define clopen sets $U_1,\dots,U_a$ by $U_1=V_1$ and
\[ U_{i+1} = U_i\cup \left(V_{i+1}\setminus \bigcup_{|n|<L} S^nU_i\right).\]
Then $U_i\cap S^nU_i=\emptyset$ for $0<|n|<L$ and
\[ V_1\cup\dots\cup V_i \subset \bigcup_{|n|<L}S^n U_i.\]
Thus $U=U_a$ satisfies the required properties.
\end{proof}

Let $L$ be a positive integer, and
$U\subset Z$ a clopen subset given by Lemma \ref{lemma: clopen marker}.
For each point $x\in Z$ we define $C(x)\subset \mathbb{Z}^k$ as the set of $n\in \mathbb{Z}^k$ satisfying
$S^nx\in U$.
From (\ref{eq: clopen marker}), this is syndetic (coarsely dense) in $\mathbb{Z}^k$.
From $U\cap S^nU=\emptyset$ $(0<|n|<L)$ we have $|n-m|\geq L$ for
any distinct $n$ and $m$ in $C(x)$.
We have
\begin{equation} \label{eq: equivariance of C(x)}
  C(S^n x) = -n + C(x).
\end{equation}
The set $C(x)$ depends continuously on $x$; for any $x\in Z$ and
any finite set $\Omega\subset \mathbb{Z}^k$ if $y\in Z$ is sufficiently close to $x$ then
$C(y)\cap \Omega = C(x)\cap \Omega$.

We consider the Voronoi decomposition associated with $C(x)$.
We define a bounded convex polytope $V(x,n)\subset \mathbb{R}^k$ for each
$n\in C(x)$ by
\[ V(x,n) = \{u\in \mathbb{R}^k|\, \forall m\in C(x): |u-n|\leq |u-m|\}.\]
This contains the closed ball $B_{L/2}(n)$ of radius $L/2$ centered at $n$.
These form a tiling:
\[ \mathbb{R}^k = \bigcup_{n\in C(x)} V(x,n).\]
The tiles
$V(x,n)$ are locally constant; for any $x\in Z$ and $n\in C(x)$ if $y\in Z$ is
sufficiently close to $x$ then $n\in C(y)$ and $V(y,n)=V(x,n)$.
We set $V^{\mathbb{Z}}(x,n) = \mathbb{Z}^k\cap V(x,n)$.

\begin{lemma}\label{lemma: voronoi cells are large}
For any $R>0$ we can choose $L$ sufficiently large so that all $V(x,n)$ satisfy
\[ \frac{|\partial^{\mathbb{Z}}_R V^\mathbb{Z}(x,n)|}{|V^{\mathbb{Z}}(x,n)|} <\frac{1}{R}.\]
\end{lemma}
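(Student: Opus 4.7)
The plan is to reduce the combinatorial quotient to a Euclidean estimate by means of a geometric sandwich: each cell $V(x,n)$ is a convex polytope trapped between two balls of comparable radii, so both its lattice-point count and that of its boundary layer are controlled by the corresponding Euclidean volumes.

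First I would establish the sandwich $B_{L/2}(n) \subset V(x,n) \subset B_{L+\sqrt{k}/2}(n)$. The inner inclusion is immediate from the $L$-separation of $C(x)$: if $|u-n|<L/2$ and $m \in C(x)\setminus\{n\}$, then $|u-m|\geq|n-m|-|u-n|>L/2>|u-n|$. For the outer inclusion, combining the marker property \eqref{eq: clopen marker} with the equivariance \eqref{eq: equivariance of C(x)} shows that every $y\in\mathbb{Z}^k$ is within distance $L$ of $C(x)$: writing $z=S^y x$ and choosing $|n'|<L$ with $z \in S^{n'}U$, the identity $C(S^y x)=-y+C(x)$ yields $y-n' \in C(x)$. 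Approximating any $u\in V(x,n)$ by a lattice point at distance $\leq\sqrt{k}/2$ and applying the Voronoi inequality $|u-n|\leq|u-m|$ to the nearby $m\in C(x)$ gives the outer bound.

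From the sandwich, $\vol(V(x,n))$ lies between $c_k L^k$ and $C_k L^k$. Since $V(x,n)$ is convex and contained in a ball of radius $L+O(1)$, the $(k-1)$-dimensional surface area of $\partial V(x,n)$ is $O(L^{k-1})$ by monotonicity of surface area under inclusion of convex bodies (alternatively, one notes that $V(x,n)$ is defined by intersecting with the $O(1)$ half-spaces coming from $C(x)\cap B_{3L}(n)$, and each facet has $(k-1)$-volume $O(L^{k-1})$). Steiner's formula then bounds the Euclidean $R$-neighborhood of $\partial V(x,n)$ by $A_k L^{k-1} R + O(R^k)$, provided $R<L/2$ so that the inward thickening is well defined.

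Finally, for a convex body $K$ containing a unit ball, the discrepancy $\bigl|\,|K\cap\mathbb{Z}^k|-\vol(K)\,\bigr|$ is bounded by a constant multiple of the surface area of $K$. Applied to $V(x,n)$ and to its thickenings, this yields $|V^{\mathbb{Z}}(x,n)|\geq c'_k L^k$ and $|\partial^{\mathbb{Z}}_R V^{\mathbb{Z}}(x,n)|\leq B_k L^{k-1} R$ once $L\gg R$. The ratio is therefore at most $D_k R/L$, which drops below $1/R$ as soon as $L > D_k R^2$. The main obstacle is purely bookkeeping: verifying the outer bound on $V(x,n)$ carefully from the marker property and tracking the constants in the volume-to-lattice comparison. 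No new geometric input is needed beyond these standard ingredients from convex geometry.
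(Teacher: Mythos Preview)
Your approach is correct, but it is genuinely different from the paper's. You trap $V(x,n)$ between two balls, invoke surface-area monotonicity for convex bodies, Steiner-type estimates for the tubular neighbourhood of $\partial V(x,n)$, and a lattice-point discrepancy bound. The paper instead uses a pure scaling argument: with $V=V(x,n)$ and $cV=\{n+c(u-n):u\in V\}$, the single geometric observation is that if $p\in cV$ with $c<1$ then $B_{(1-c)L/2}(p)\subset V$ (and the analogous statement for $c>1$), which follows directly from convexity and the inner ball $B_{L/2}(n)\subset V$. From this one reads off $|V^{\mathbb Z}|\geq (1-2\sqrt{k}/L)^k\vol(V)$ and $|\partial^{\mathbb Z}_R V^{\mathbb Z}|\leq \bigl((1+2(R+\sqrt{k})/L)^k-(1-2(R+\sqrt{k})/L)^k\bigr)\vol(V)$; the factor $\vol(V)$ cancels in the ratio. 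In particular the paper never needs the outer inclusion $V(x,n)\subset B_{L+\sqrt{k}/2}(n)$, nor any black-box convex geometry (Cauchy's formula, quermassintegrals, Steiner). Your route has the advantage of being the ``standard'' estimate a reader versed in convex geometry would reach for, and it makes the orders $|V^{\mathbb Z}|\asymp L^k$, $|\partial^{\mathbb Z}_R V^{\mathbb Z}|=O(L^{k-1}R)$ transparent; the paper's route is more self-contained and avoids importing any external theorems. One small caveat in your write-up: Steiner's formula as usually stated handles only the outer thickening $(V+B_R)\setminus V$; for the inner part $V\setminus V_{-R}$ you should say explicitly that $\vol(V\setminus V_{-R})=\int_0^R\mathrm{surf}(V_{-t})\,dt\leq R\cdot\mathrm{surf}(V)$ by monotonicity of surface area, or else note that $V_{-R}\supset (1-2R/L)V$ --- which is precisely the paper's scaling claim.
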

\begin{proof}
For simplicity of the notation, we write $V=V(x,n)$.
For a positive number $c$ we define $cV$ by
\[ cV = \{n+c(u-n)\in \mathbb{R}^k|\, u\in V\}.\]
When $c<1$, $cV$ is contained in $V$. When $c>1$, $cV$ contains $V$.
\begin{claim} \label{claim: similarity and V}
If $c<1$ then all points $p$ in $cV$ satisfy $B_{(1-c)L/2}(p)\subset V$.
If $c>1$ then all points $p$ outside of $cV$ satisfy $B_{(c-1)L/2}(p)\cap V=\emptyset$.
\end{claim}
\begin{proof}
We consider the case $c<1$. The case $c>1$ is similar.
Pick $p\in cV$ and let $q\in \partial V$ be any point in the boundary $\partial V$.
Let $F$ be a facet ($(k-1)$-dimensional face) of $V$ containing $q$.
Let $cF = \{n+c(u-n)|\, u\in F\}$ be the corresponding facet of $cV$, and $H$ the
hyperplane containing $cF$.
Since $B_{L/2}(n)\subset V$, the distance between $H$ and $F$ is at least $(1-c)L/2$.
The line segment between $p$ and $q$ must intersects with $H$ because
$p$ and $q$ are located on different sides of $H$.
Hence $|p-q|\geq d(H,F) \geq (1-c)L/2$. This implies $B_{(1-c)L/2}(p)\subset V$.
\end{proof}

Now that the claim has been established we can finish the proof of Lemma~\ref{lemma: voronoi cells are large}.
Let $c=1-2\sqrt{k}/L$ and $p\in cV$.
Take $m\in \mathbb{Z}^k$ satisfying $p\in m+[0,1)^k$.
By $(1-c)L/2=\sqrt{k}$ and Claim \ref{claim: similarity and V}
\[ m\in  B_{\sqrt{k}}(p)\subset V.\]
So $m\in V^\mathbb{Z} = V\cap \mathbb{Z}^k$.
This means that
\[  \bigcup_{m\in V^{\mathbb{Z}}} (m+[0,1)^k)\supset cV.\]
Then $|V^\mathbb{Z}|\geq \vol(cV) = c^k \vol(V) = (1-2\sqrt{k}/L)^k \vol(V)$ where $\vol(V)$ is the $k$-dimensional Lebesgue
measure of $V$.

Set $c_1 = 1-2(R+\sqrt{k})/L$ and $c_2 = 1+2(R+\sqrt{k})/L$.
Note $(1-c_1)L/2 = (c_2-1)L/2= R+\sqrt{k}$.
Take $m\in \partial^{\mathbb{Z}}_R V^\mathbb{Z}$. Then for every point $p$ in $m+[0,1)^k$ the ball
$B_{R+\sqrt{k}}(p)$ has non-empty intersections both with $V$ and $\mathbb{R}^k\setminus V$.
This implies that $m+[0,1)^k$ is contained in $c_2V\setminus c_1V$ by Claim \ref{claim: similarity and V}.
Hence
\[ \bigcup_{m\in \partial^\mathbb{Z}_R V^\mathbb{Z}} (m+[0,1)^k) \subset c_2V\setminus c_1V.\]
Therefore $|\partial^\mathbb{Z}_R V^\mathbb{Z}| \leq \vol(c_2V\setminus c_1V) = (c_2^k-c_1^k)\vol(V)$.
As a conclusion,
\[ \frac{|\partial^\mathbb{Z}_R V^\mathbb{Z}|}{|V^\mathbb{Z}|} \leq
   \frac{(1+2(R+\sqrt{k})/L)^k-(1-2(R+\sqrt{k})/L)^k}{(1-2\sqrt{k}/L)^k}.\]
If $L$ is sufficiently large then the right-hand-side is smaller than $1/R$.
\end{proof}

\begin{proof}[Proof of Proposition \ref{prop: main proposition for embedding of zero dimensional extension}]

Let $\delta>0$ and $f\in C(X,K)$.
We choose $0<\varepsilon<\delta$ such that
\begin{equation} \label{eq: choice of epsilon for embdding in zero dimensional case}
   d(x,y)<\varepsilon\Longrightarrow |f(x)-f(y)|<\delta.
\end{equation}
Using $\mdim(X,T)<D/2$ and the definition of mean dimension in (\ref{eq: mean dimension by Folner sequence}),
we can find $R>0$ such that if a finite subset $\Omega\subset \mathbb{Z}^k$ satisfies
\[ \frac{|\partial^{\mathbb{Z}}_R\Omega|}{|\Omega|} < \frac{1}{R}\]
then we have
\begin{equation} \label{eq: condtion on R and widim}
   \frac{\widim_\varepsilon(X,d_{\Omega})}{|\mathrm{int}^{\mathbb{Z}}_1\Omega|} < \frac{D}{2}.
\end{equation}
Here recall $\mathrm{int}^{\mathbb{Z}}_1\Omega = \Omega\setminus \partial^{\mathbb{Z}}_1\Omega$.

We define an equivalence relation between finite subsets of $\mathbb{Z}^k$ by
\[ \Omega_1\sim \Omega_2 \Longleftrightarrow \exists a\in \mathbb{Z}^k: \Omega_2=a+\Omega_1.\]
We set
\[ \mathcal{A} := \left\{\Omega\subset \mathbb{Z}^k\text{: finite set}|\,
   \frac{|\partial_R^\mathbb{Z}\Omega|}{|\Omega|} < \frac{1}{R}\right\}/ \sim.\]
Choose $\Omega_1, \Omega_2,\dots \subset \mathbb{Z}^k$ so that
\[ \mathcal{A} = \{[\Omega_1],[\Omega_2],\dots\}, \quad \Omega_i \not\sim \Omega_j\, (i\neq j).\]
For each $\Omega_i$ we consider
\[ I_f|_{\mathrm{int}_1^\mathbb{Z}\Omega_i}:X\to K^{\mathrm{int}_1^\mathbb{Z}\Omega_i},
   \quad x\mapsto (f(T^nx))_{n\in \mathrm{int}_1^\mathbb{Z}\Omega_i}.\]
This satisfies (by (\ref{eq: choice of epsilon for embdding in zero dimensional case}))
\begin{multline*} d_{\Omega_i}(x,y)<\varepsilon\Longrightarrow\\
  \norm{I_f(x)|_{\mathrm{int}_1^\mathbb{Z}\Omega_i}-I_f(y)|_{\mathrm{int}_1^\mathbb{Z}\Omega_i}}_\infty
  \stackrel{\mathrm{def}}{=} \max_{n\in \mathrm{int}^\mathbb{Z}_1\Omega_i}|f(T^nx)-f(T^ny)|< \delta.
\end{multline*}
Choose an $\varepsilon$-embedding $p_i:(X,d_{\Omega_i})\to P_i$ such that $P_i$ is a simplicial complex of
dimension $\widim_\varepsilon(X,d_{\Omega_i}) < (D/2)|\mathrm{int}^\mathbb{Z}_1\Omega_i|$ (by (\ref{eq: condtion on R and widim})).
Then by Lemmas \ref{lemma: approximation by linear map} and \ref{lemma: embedding of simplicial complex}
we can find a linear \textit{embedding} $g_i:P_i\to K^{\mathrm{int}^\mathbb{Z}_1\Omega_i}$ satisfying
\[ \norm{g_i(p_i(x))-I_f(x)|_{\mathrm{int}_1^\mathbb{Z}\Omega_i}}_\infty < \delta.\]
Set $F_i = g_i\circ p_i:X\to K^{\mathrm{int}_1^\mathbb{Z}\Omega_i}$.
This is an $\varepsilon$-embedding with respect to~$d_{\Omega_i}$.

By Lemma \ref{lemma: voronoi cells are large} we can choose $L>0$ so that all Voronoi tiles $V(x,n)$ $(x\in Z)$ satisfy
$|\partial_R^\mathbb{Z} V^\mathbb{Z}(x,n)|/|V^\mathbb{Z}(x,n)|<1/R$.
We want to define a perturbation $g:X\to K$ of $f$.
Roughly speaking we define it by painting the functions $F_i$ inside the Voronoi tiles.
Take $x\in X$.
We look at the decomposition $\mathbb{Z}^k = \bigcup_{n\in C(\pi(x))}V^\mathbb{Z}(\pi(x),n)$
associated with $\pi(x)\in Z$, and we ask which tile contains the origin $0$.
There are two cases:
\begin{enumerate}
   \item  For any $n\in C(\pi(x))$ the origin is not contained in $\mathrm{int}_1^\mathbb{Z} V^\mathbb{Z}(\pi(x),n)$.
   \item  There uniquely exists $n\in C(\pi(x))$ satisfying $0\in \mathrm{int}_1^\mathbb{Z} V^\mathbb{Z}(\pi(x),n)$.
\end{enumerate}
In Case (1) we set $g(x)=f(x)$.
In Case (2) we proceed as follows.
The Voronoi tile $V(\pi(x),n)$ satisfies
$|\partial^\mathbb{Z}_R V^{\mathbb{Z}}(\pi(x),n)|/|V^{\mathbb{Z}}(\pi(x),n)|<1/R$.
Hence there uniquely exist $i\geq 1$ and $a\in \mathbb{Z}^k$ satisfying $V^\mathbb{Z}(\pi(x),n)=a+\Omega_i$.
Then we set
\[ g(x) = F_i(T^a x)_{-a}.\]
Here $-a\in \mathrm{int}_1^\mathbb{Z}\Omega_i$ because $0\in \mathrm{int}_1^\mathbb{Z} V^\mathbb{Z}(\pi(x),n)$.
The Voronoi tile $V(\pi(x),n)$ is locally constant with respect to $x$.
Hence the map $g$ becomes continuous.
(The above $i$ and $a$ are also locally constant.)
It satisfies $|g(x)-f(x)|<\delta$ because
\[ |F_i(T^a x)_{-a}-f(x)|\leq \norm{F_i(T^a x)-I_f(T^a x)|_{\mathrm{int}_1^\mathbb{Z}\Omega_i}}_\infty <\delta.\]

\begin{claim} \label{claim: description of I_g zero dimensional case}
Let $x\in X$ and $n\in C(\pi(x))$.
Take $i\geq 1$ and $a\in \mathbb{Z}^k$ satisfying $V^\mathbb{Z}(\pi(x),n)=a+\Omega_i$.
Then $I_g(x)|_{\mathrm{int}_1^\mathbb{Z}V^\mathbb{Z}(\pi(x),n)} = F_i(T^a x)$.
Here we naturally consider $I_g(x)|_{\mathrm{int}_1^\mathbb{Z} V^\mathbb{Z}(\pi(x),n)}$ as an element of
$K^{\mathrm{int}_1^\mathbb{Z}\Omega_i}$ through $\mathrm{int}_1^\mathbb{Z} V^\mathbb{Z}(\pi(x),n)=a+\mathrm{int}_1^\mathbb{Z}\Omega_i$.
\end{claim}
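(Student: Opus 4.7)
The plan is to verify the claim by direct equivariance computation, coordinate by coordinate. I fix $m \in \mathrm{int}_1^\mathbb{Z} V^\mathbb{Z}(\pi(x),n)$ and evaluate $I_g(x)_m = g(T^m x)$ using the definition of $g$ applied to the point $T^m x$, then compare with the $(m-a)$-th coordinate of $F_i(T^a x)$.

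The first step is to translate the Voronoi data at $\pi(x)$ to that at $\pi(T^m x) = S^m \pi(x)$. By the equivariance relation (\ref{eq: equivariance of C(x)}), $C(S^m\pi(x)) = -m + C(\pi(x))$, so in particular $n - m \in C(S^m\pi(x))$. A direct computation from the defining inequalities $|u - p|\leq|u - q|$ shows that the Voronoi tiles are equivariant as well:
\[ V(S^m\pi(x), n - m) = -m + V(\pi(x), n), \]
and consequently $V^{\mathbb{Z}}(S^m\pi(x), n-m) = -m + V^{\mathbb{Z}}(\pi(x), n)$ and $\mathrm{int}_1^\mathbb{Z} V^{\mathbb{Z}}(S^m\pi(x), n-m) = -m + \mathrm{int}_1^\mathbb{Z} V^{\mathbb{Z}}(\pi(x), n)$. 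Since $m$ lies in $\mathrm{int}_1^\mathbb{Z} V^{\mathbb{Z}}(\pi(x), n)$, the origin lies in $\mathrm{int}_1^\mathbb{Z} V^{\mathbb{Z}}(S^m\pi(x), n-m)$; therefore $T^m x$ falls into Case (2) of the construction of $g$, and the tile selected there is precisely $V(S^m\pi(x), n-m)$.

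Next I identify the pair $(i', a')$ produced by the construction when applied to $T^m x$. From $V^{\mathbb{Z}}(\pi(x), n) = a + \Omega_i$ and the translation above,
\[ V^{\mathbb{Z}}(S^m\pi(x), n-m) = (a-m) + \Omega_i. \]
Since the sets $\Omega_1, \Omega_2, \dots$ were chosen to be pairwise inequivalent under $\sim$, the representative class $[\Omega_i] \in \mathcal{A}$ is uniquely determined, forcing $i' = i$ and $a' = a - m$. By the definition of $g$,
\[ g(T^m x) = F_{i'}(T^{a'} T^m x)_{-a'} = F_i(T^{a} x)_{m - a}. \]
The index $m - a$ indeed lies in $\mathrm{int}_1^\mathbb{Z}\Omega_i$, since $m \in \mathrm{int}_1^\mathbb{Z} V^{\mathbb{Z}}(\pi(x), n) = a + \mathrm{int}_1^\mathbb{Z}\Omega_i$.

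Finally, under the identification $\mathrm{int}_1^\mathbb{Z} V^{\mathbb{Z}}(\pi(x), n) = a + \mathrm{int}_1^\mathbb{Z} \Omega_i$, the coordinate $m$ on the left corresponds to $m - a$ on the right, and the displayed equation $g(T^m x) = F_i(T^a x)_{m-a}$ is exactly the statement that $I_g(x)|_{\mathrm{int}_1^\mathbb{Z} V^{\mathbb{Z}}(\pi(x), n)}$ equals $F_i(T^a x)$ coordinatewise. Since $m$ was arbitrary in $\mathrm{int}_1^\mathbb{Z} V^{\mathbb{Z}}(\pi(x), n)$, the claim follows. There is no real obstacle; the only subtlety is the bookkeeping needed to confirm the uniqueness of $i'$ (which rests on the chosen system of representatives for $\mathcal{A}$) and the coordinate identification $m \leftrightarrow m - a$.
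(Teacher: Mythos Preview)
Your proof is correct and follows essentially the same approach as the paper: fix a coordinate $m$, use the equivariance of the Voronoi tiling to translate the data from $\pi(x)$ to $S^m\pi(x)$, verify that $T^m x$ falls into Case~(2) with translate $a-m$, and then read off $g(T^m x)=F_i(T^a x)_{m-a}$. You are simply more explicit about the uniqueness of $(i',a')$ and the coordinate identification than the paper, which leaves these as routine observations.
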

\begin{proof}
Take $m\in \mathrm{int}_1^\mathbb{Z}V^\mathbb{Z}(\pi(x),n)= a+\mathrm{int}_1^\mathbb{Z}\Omega_i$.
By (\ref{eq: equivariance of C(x)})
\[
V^\mathbb{Z}(\pi(T^m x),n-m) = -m+V^\mathbb{Z}(\pi(x),n) = -m+a+\Omega_i.
\]
Hence the origin is contained in $\mathrm{int}_1^\mathbb{Z} V^\mathbb{Z}(\pi(T^m x),n-m)$, and
\[
g(T^mx) = F_i(T^{-m+a}(T^m x))_{m-a} = F_i(T^a x)_{m-a}.
\]
When $m$ runs over $\mathrm{int}_1^\mathbb{Z} V^\mathbb{Z}(\pi(x),n)$, $m-a$ runs over $\mathrm{int}_1^\mathbb{Z}\Omega_i$.
Thus we get $I_g(x)|_{\mathrm{int}_1^\mathbb{Z}V^\mathbb{Z}(\pi(x),n)} = F_i(T^a x)$.
\end{proof}

We want to prove that
the map $(I_g,\pi):X\to K^{\mathbb{Z}^k}\times Z$ is a $\delta$-embedding with respect to $d$.
Suppose $(I_g(x),\pi(x))=(I_g(y),\pi(y))$ for some $x,y\in X$.
Set $z=\pi(x)=\pi(y)$, and take $n\in C(z)$ satisfying $0\in V^\mathbb{Z}(z,n)$.
We can find $i\geq 1$ and $a\in \mathbb{Z}^k$ satisfying $V^\mathbb{Z}(z,n)=a+\Omega_i$.
(Note that this implies $-a\in \Omega_i$.)
By Claim \ref{claim: description of I_g zero dimensional case}
\[ I_g(x)|_{\mathrm{int}_1^\mathbb{Z} V^\mathbb{Z}(z,n)} = F_i(T^a x)
   = I_g(y)|_{\mathrm{int}_1^\mathbb{Z} V^\mathbb{Z}(z,n)} = F_i(T^a y).\]
Since $F_i$ is an $\varepsilon$-embedding with respect to $d_{\Omega_i}$, we get
$d_{\Omega_i}(T^a x,T^a y)<\varepsilon$.
Then $-a\in \Omega_i$ implies $d(x,y)<\varepsilon<\delta$.
\end{proof}

\section{Adding one dimension} \label{section: adding one dimension}

The key idea in Section \ref{section: proof of embedding and symbolic system}
is the use of the Voronoi tiling.
The purpose of this section is to present a modified version of Voronoi tiling
technique.
This has a wider applicability and will be used in the rest of the paper.

  Suppose a system $(X,\mathbb{Z}^k,T)$ has the marker property (cf.\ Definition~\ref{def: marker}).
Let $M$ be a positive integer.
From the marker property there exist an integer $L\geq M$ and
a continuous function $\phi:X\to [0,1]$ so that
\begin{itemize}
 \item If $\phi(x)>0$ at some $x\in X$, then $\phi(T^nx) = 0$ for all non-zero $n\in \mathbb{Z}^k$
  with $|n|<M$.
 \item For any $x\in X$ there exists $n\in \mathbb{Z}^k$ satisfying $|n|<L$ and  $\phi(T^nx)=1$.
\end{itemize}
\begin{proof}
Let $U\subset X$ be an open set satisfying the definition of the marker property.
Since $X$ is compact, there exists a compact set $K\subset U$ and an integer $L\geq M$
satisfying
\[ X= \bigcup_{|n|<L} T^{-n}K.\]
Take a continuous function $\phi:X\to [0,1]$ satisfying
$\phi=1$ on $K$ and $\supp\, \phi\subset U$.
\end{proof}

Now we introduce the key technique: adding one dimension.
Take a point $x\in X$, and we consider the Voronoi tiling in $\mathbb{R}^{k+1}$ associated with the set
\[ \{(n,1/\phi(T^nx))|\, n\in \mathbb{Z}^k: \phi(T^nx)\neq 0\}. \]
We define the Voronoi cell
$V(x,n)\subset \mathbb{R}^{k+1}$ with the Voronoi center $(n,1/\phi(T^nx))$ by
\[ V(x,n) = \left\{u\in \mathbb{R}^{k+1}\middle|\parbox{3.15in}{$\forall m\in \mathbb{Z}^k:$\\ \hspace*{\fill} $|u-(n,1/\phi(T^nx))| \leq |u-(m,1/\phi(T^mx))|$}\right\}.\]
Here $|\cdot|$ is the standard Euclidean norm.
For $n\in \mathbb{Z}^k$ with $\phi(T^nx)=0$ we set $V(x,n) = \emptyset$.
The space $\mathbb{R}^{k+1}$ is decomposed into these convex subsets:
\[  \mathbb{R}^{k+1} = \bigcup_{n\in \mathbb{Z}^{k}} V(x,n).\]
We choose a real number $H\geq (L+\sqrt{k})^2$ and
set
\[
W(x,n) = \pi_{\R^k}\left(V(x,n)\cap (\mathbb{R}^k\times \{-H\})\right),
\]
with $\pi_{\R^k}$ denoting the projection to the first $k$ coordinates.

Then $\mathbb{R}^k$ is decomposed into these $W(x,n)$:
\[ \mathbb{R}^k = \bigcup_{n\in \mathbb{Z}^k} W(x,n).\]
This construction is naturally $\mathbb{Z}^k$-equivariant:
\[W(T^m x,n-m)=-m+W(x,n).\]
For each $n\in \mathbb{Z}^k$ the polytope $W(x,n)$ depends continuously on $x$ in the following sense:
Suppose $W(x,n)$ has a non-empty interior. For any $\varepsilon>0$ if $y\in X$ is sufficiently
close to $x$ then the Hausdorff distance between $W(x,n)$ and $W(y,n)$ are smaller than $\varepsilon$.
We gather basic properties of the Voronoi tiling in the next lemma.
\begin{lemma}  \label{lemma: Voronoi tiling}
Let $n\in \mathbb{Z}^k$ and $a\in \mathbb{R}^k$.
\begin{enumerate}
\item
If $\phi(T^nx)>0$, then \[B_{M/2}(n,1/\phi(T^nx))\subset V(x,n).\]
Here $B_{M/2}({\cdot})$ denotes the ball of radius $M/2$
with respect to the Euclidean norm.
\item If $W(x,n)$ is non-empty, then $1\leq 1/\phi(T^nx)\leq 2$.
\item If $(a,-H) \in V(x,n)$, i.e.\ $a\in W(x,n)$, then $|a-n|<L+\sqrt{k}$.
\item
Let $s>1$ and $r>0$.
We can choose $M$ sufficiently large depending on $s,r$ so that
if $(a,-sH)\in V(x,n)$ then \[B_r(a/s + (1-1/s)n)\subset W(x,n).\]
Note that the choice of $L,H$ depends on $M$.
\end{enumerate}
\end{lemma}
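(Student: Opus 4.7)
The backbone of all four items is the observation that the marker property of $\phi$ forces \emph{horizontal} separation of Voronoi centers: if $n \neq m$ and $\phi(T^n x), \phi(T^m x) > 0$, then $|n - m| \geq M$ in $\R^k$, so the two centers sit at Euclidean distance at least $M$ apart in $\R^{k+1}$. Item (1) follows at once: any point within $M/2$ of a center is nearer to it than to any competitor, so the ball $B_{M/2}(n, 1/\phi(T^n x))$ lies in $V(x, n)$.

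For items (2) and (3) I plan to introduce a ``control'' center with $u_m := 1/\phi(T^m x) = 1$. The second defining property of $\phi$, applied to the nearest integer point to a given $a \in \R^k$, produces some $m \in \Z^k$ with $\phi(T^m x) = 1$ and $|a - m| < L + \sqrt{k}$. If $(a, -H) \in V(x, n)$, then the defining inequality
\[ |a - n|^2 + (H + u_n)^2 \leq |a - m|^2 + (H + 1)^2 \]
combined with $u_n \geq 1$ (which is the easy half of (2)) yields $|a - n|^2 \leq (L + \sqrt{k})^2$, which is (3). Rearranging the same inequality to $(u_n - 1)(2H + u_n + 1) \leq (L + \sqrt{k})^2 \leq H$ and using $H \geq (L + \sqrt{k})^2$ forces $u_n \leq 3/2 < 2$, finishing (2).

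Item (4) is the delicate step and the main obstacle. The idea is to exploit the convexity of $V(x, n)$: by (1) the cell contains $B_{M/2}(n, u_n)$, and it contains the apex $(a, -sH)$ by hypothesis, so the entire cone with apex $(a, -sH)$ over the ball lies in $V(x, n)$. First I would repeat the argument of (2), (3) at depth $sH$ (replacing $H$ by $sH$) to obtain the a priori bounds $|a - n| \leq L + \sqrt{k}$ and $u_n \in [1, 3/2]$. Slicing the cone at height $-H$ produces a ball in $W(x, n)$; a direct calculation shows its center differs from the target $a/s + (1 - 1/s) n$ by $\tfrac{(s-1) u_n}{s(u_n + sH)} (a - n)$, an error of order $(L + \sqrt{k})/(sH) = O(1/(L + \sqrt{k}))$ under $H \geq (L + \sqrt{k})^2$, while its radius is $\tfrac{(s-1) H M}{2(u_n + sH)}$, of order $M$. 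Since the setup allows us to take $L \geq M$ and $H \geq (L + \sqrt{k})^2$, the displacement of the slice center tends to $0$ while the slice radius grows linearly in $M$; choosing $M$ large enough that the radius exceeds $r$ plus the displacement delivers $B_r(a/s + (1 - 1/s) n) \subset W(x, n)$. The hard part is precisely this balancing act: $M$ controls the size of the ball around each center but also forces $L$ and $H$ to grow, and one must check that the vertical offset $u_n \in [1, 3/2]$ is genuinely negligible at the scale of the Voronoi cells in $\R^{k+1}$.
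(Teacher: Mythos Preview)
Your proposal is correct and follows essentially the same route as the paper: the same separation argument for (1), the same ``control center at height $1$'' trick for (2) and (3), and for (4) the same convexity/cone argument---take the convex hull of the apex $(a,-sH)$ with the horizontal disc of radius $M/2$ at height $u_n$, slice at height $-H$, and verify that the resulting ball has radius $\sim \frac{(s-1)M}{2s}$ while its center is displaced from $a/s+(1-1/s)n$ by $O\big((L+\sqrt{k})/H\big)=O\big(1/(L+\sqrt{k})\big)$. Your computation of the displacement $\frac{(s-1)u_n}{s(u_n+sH)}(a-n)$ and radius $\frac{(s-1)HM}{2(u_n+sH)}$ matches the paper's exactly; the only cosmetic difference is that you obtain the slightly sharper bound $u_n\le 3/2$ where the paper is content with $u_n\le 2$.
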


Taking $s \approx 1$ and $r$ large, property (4) of Lemma~\ref{lemma: Voronoi tiling} implies that if $V(x,n)$ intersects $\mathbb{R}^k\times \{-sH\}$ then
$W(x,n)$ contains a large ball.

\begin{proof}
(1) Since $\phi(T^nx)>0$, we have $\phi(T^{n+i}x)=0$ for $0<|i|<M$.
Hence for any $m\neq n$ with $\phi(T^mx)> 0$ we have $|n-m|\geq M$.
For
$u\in B_{M/2}(n,1/\phi(T^nx))$
\begin{multline*}
 |u-(m,1/\phi(T^mx))|\geq\\ |(n,1/\phi(T^nx))-(m,1/\phi(T^mx))| - |u-(n,1/\phi(T^nx))|
   \\
   \geq M/2.
 \end{multline*}

(2) and (3): Suppose $(a,-H) \in V(x,n)$ and set $t=1/\phi(T^nx)\geq 1$.
Let $b\in \mathbb{Z}^k$ be the nearest integer point to $a$.
There exists $|i|<L$ satisfying $\phi(T^{b+i}x)=1$. Set $m=b+i$.
We have $|a-m|< L+\sqrt{k}$ and
\[ |(a,-H)-(n,t)|\leq |(a,-H)-(m,1)| < \sqrt{(L+\sqrt{k})^2+(H+1)^2}.\]
We get $|a-n|<L+\sqrt{k}$ (by $t\geq 1$) and $(H+t)^2 < (L+\sqrt{k})^2 +(H+1)^2$.
Recall $H\geq (L+\sqrt{k})^2$. Then
\[ t \leq 1+\frac{(L+\sqrt{k})^2+1}{2H} \leq 2.\]
This proves (2) and (3).

(4)
The same argument as above shows $|a-n|< L+\sqrt{k}$. Set $t=1/\phi(T^n x)$.
For any $u\in \mathbb{R}^k$ with $|u-n|\leq M/2$ the point $(u,t)$ is contained in $V(x,n)$ by (1).
Consider the line $\ell$ between $(a,-sH)$ and $(u,t)$.
By the convexity of $V(x,n)$, the intersection between $\ell$ and $\mathbb{R}^k\times \{-H\}$
is contained in $W(x,n)$. It follows that
\begin{equation}  \label{eq: ball in W(x,n)}
   B_{\frac{(s-1)HM}{2(sH+t)}}\left(a+\frac{(s-1)H}{sH+t}(n-a)\right) \subset W(x,n).
\end{equation}

The radius of the ball in the left hand side of (\ref{eq: ball in W(x,n)}) goes to infinity as $M\to \infty$ because
\[ \frac{(s-1)HM}{2(sH+t)} = \frac{(s-1)M}{2(s+t/H)} \]
(recall $H$ depends on $M$, but as $M\to\infty$ so do both $L$ and $H$).

Our choice of paramters ensures that the center of the ball in (\ref{eq: ball in W(x,n)}) is close to $a/s+(1-1/s)n$. Indeed,
\begin{multline*}
   \left|\frac{a}{s} +\left(1-\frac{1}{s}\right)n -\left(a+\frac{(s-1)H}{sH+t}(n-a)\right)\right|=\\
   \begin{aligned}
   &=      \left|\left(1-\frac{1}{s}-\frac{(s-1)H}{sH+t}\right)(n-a)\right| \\
   &\leq   \left|1-\frac{1}{s}-\frac{s-1}{s+t/H}\right|(L+\sqrt{k}) \\
   &=       \left|\left(1-\frac1s\right)\cdot \frac{t}{H}\cdot\frac{1}{s+t/H}\right|(L+\sqrt{k}) \\
   &\leq    \frac{4(L+\sqrt{k})}{H} \leq \frac{4}{L+\sqrt{k}}.
   \end{aligned}
\end{multline*}
Here we have used $|n-a|<L+\sqrt{k}$, $1\leq t\leq 2$ and $H\geq (L+\sqrt{k})^2$.
Since $L\geq M$, this goes to zero as $M\to \infty$.
Therefore if $M$ is sufficiently large then $B_r(a/s+(1-1/s)n)$ is contained in $W(x,n)$.
\end{proof}

Let $A\subset \mathbb{R}^k$.
We denote by $\partial A$ and $\mathrm{int} A = A\setminus \partial A$ the standard boundary and interior of $A$.
For $E>0$
we define $\partial_E A$ as the set of $u\in \mathbb{R}^k$
satisfying $B_E(u)\cap A\neq \emptyset$ and $B_E(u)\cap (\mathbb{R}^k\setminus A)\neq \emptyset$.
Set $\mathrm{int}_E A =A\setminus \partial_E A$.
For $x\in X$, we define a set $\partial(x,E)$ in $\mathbb{R}^k$ by
\[ \partial(x,E) = \bigcup_{n\in \mathbb{Z}^k}\partial_E W(x,n).\]
The next lemma is crucial in the proofs of Theorems \ref{thm: zero mean dimension and topological entropy}
and \ref{thm: metric mean dimension is equal to mean dimension}.
\begin{lemma}\label{lemma: most tiles are large}
For any $\varepsilon>0$ and $E>0$ if we choose $M$ sufficiently large then
\[ \limsup_{R\to \infty} \frac{1}{\vol(B_R)}\sup_{x\in X}\vol(\partial(x,E)\cap B_R) < \varepsilon.\]
Here $B_R=\{u \in \mathbb{R}^k|\, |u|\leq R\}$, and $\vol(\cdot)$ is the $k$-dimensional Lebesgue measure.

\end{lemma}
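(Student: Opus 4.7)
The plan is to build, for any $s>1$, a piecewise affine map $\Phi_s\colon \mathbb{R}^k\to \mathbb{R}^k \setminus \partial(x,E)$ that is a contraction by factor $1/s$ on each piece (hence with constant Jacobian $s^{-k}$) and whose images altogether have $k$-volume $s^{-k}\vol(B_R)$ inside a mildly inflated ball. Letting $s$ approach $1$ from above then forces $\partial(x,E)$ to fill at most an asymptotic fraction $1-s^{-k}$ of each large ball, which can be made smaller than any prescribed $\varepsilon$. The pieces on which $\Phi_s$ is affine will be the slices of the Voronoi cells at the deeper level $-sH$, namely $W_s(x,n) := \{u \in \mathbb{R}^k : (u,-sH)\in V(x,n)\}$.

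Given $\varepsilon>0$ and $E>0$, I first choose $s>1$ with $1 - s^{-k}<\varepsilon$, then invoke Lemma~\ref{lemma: Voronoi tiling}(4) with this $s$ and $r=E$: for $M$ sufficiently large, every $u\in W_s(x,n)$ satisfies
\[
  B_E\bigl(u/s + (1-1/s)n\bigr) \subset W(x,n).
\]
Writing $v := u/s + (1-1/s)n$, the ball $B_E(v)$ lies in a single cell $W(x,n)$, so $v\notin \partial_E W(x,m)$ for \emph{every} $m$ (immediate for $m=n$; otherwise $B_E(v)$ is disjoint from $W(x,m)$). Hence $v \notin \partial(x,E)$. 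Defining $\Phi_s(u):= u/s + (1-1/s) n(u)$, where $n(u)$ is the unique index with $(u,-sH)\in V(x,n(u))$ (well defined off a measure-zero set), I obtain $\Phi_s(\mathbb{R}^k)\subset \mathbb{R}^k\setminus\partial(x,E)$.

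On each slice $W_s(x,n)$, $\Phi_s$ is affine with linear part $s^{-1}\mathrm{Id}$, so its Jacobian is $s^{-k}$; the images $\Phi_s(W_s(x,n))$ sit in pairwise disjoint cells $W(x,n)$, giving
\[
  \vol(\Phi_s(B_R)) = s^{-k}\vol(B_R).
\]
Repeating the argument for part~(3) of Lemma~\ref{lemma: Voronoi tiling} (comparing $(u,-sH)$ to a nearby center $(m,1)$ and using $t_n\geq 1$) yields $|u-n(u)| \leq L+\sqrt{k}$, so $|\Phi_s(u)-u|\leq (1-1/s)(L+\sqrt{k})$. Hence $\Phi_s(B_R)\subset B_{R+C}$ with $C:=(1-1/s)(L+\sqrt{k})$ independent of $x$ and $R$. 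Combining,
\[
  \vol(\partial(x,E)\cap B_R) \leq \vol(B_{R+C}) - s^{-k}\vol(B_R),
\]
and dividing by $\vol(B_R)$ and letting $R\to\infty$ delivers
\[
  \limsup_{R\to\infty} \sup_{x\in X}\,\frac{\vol(\partial(x,E)\cap B_R)}{\vol(B_R)} \leq 1-s^{-k} < \varepsilon.
\]

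The heart of the argument is Lemma~\ref{lemma: Voronoi tiling}(4) — that is where the ``adding one dimension'' device pays off, by guaranteeing that the deep interior $\mathrm{int}_E W(x,n)$ is large enough to absorb the entire slice $W_s(x,n)$ under a $1/s$-contraction. Once this geometric input is in hand, the remainder is a Jacobian computation together with the coarse size bound on the shift $\Phi_s(u)-u$; uniformity in $x$ is automatic because $C$ depends only on $s$ and on the marker parameters $L,H$, not on the specific orbit.
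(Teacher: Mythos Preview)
Your proof is correct and follows essentially the same approach as the paper's: both choose $s>1$ with $1-s^{-k}<\varepsilon$, invoke Lemma~\ref{lemma: Voronoi tiling}(4) with $r=E$, and use the affine contraction $u\mapsto u/s+(1-1/s)n$ on the level-$(-sH)$ slices $W_s(x,n)$ to fill $\mathrm{int}_E W(x,n)$ with a set of volume $s^{-k}\vol(W_s(x,n))$. The only cosmetic difference is packaging: you organize the argument via a single map $\Phi_s$ and the containment $\Phi_s(B_R)\subset B_{R+C}\setminus\partial(x,E)$, whereas the paper sums $\vol(\mathrm{int}_E W(x,n))$ over those $n$ with $W(x,n)\subset B_R$ and bounds the sum below by $s^{-k}\vol(B_{R-2L-2\sqrt{k}})$; both routes yield the same limit $1-s^{-k}$.
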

\begin{proof}
Choose $s>1$ with $1-s^{-k}<\varepsilon$.
By Lemma \ref{lemma: Voronoi tiling} (4), we can choose $M$ so that
\[ (a,-sH)\in V(x,n)\Longrightarrow B_{E}\left(\frac{a}{s}+\left(1-\frac{1}{s}\right)n\right)\subset W(x,n).\]
For $n\in \mathbb{Z}^k$ and $x\in X$ we set
\[W'(x,n)=\pi_{\R^k}\left((\mathbb{R}^k\times \{-sH\})\cap V(x,n)\right).\]
For each $x\in X$ these $W'(x,n)$ $(n\in \mathbb{Z}^k)$ form a tiling of $\mathbb{R}^k$.
For any $a\in W'(x,n)$, the point $a/s+(1-1/s)n$ is contained in
$\mathrm{int}_E W(x,n)$. Hence
\begin{align*}
\vol(\mathrm{int}_E W(x,n))&\geq \vol\left(\left\{\frac{a}{s}+\left(1-\frac{1}{s}\right)n\middle| a\in W'(x,n)\right\}\right)\\
&   = s^{-k}\vol(W'(x,n)).
\end{align*}

Let $R>0$. As in the proof of Lemma \ref{lemma: Voronoi tiling}, all points $a\in W'(x,n)$ satisfy
$|a-n|<L+\sqrt{k}$.
Hence if $W'(x,n)$ has a non-empty intersection with the ball $B_{R-2L-2\sqrt{k}}$
then $|n|< R-L-\sqrt{k}$ and $W(x,n)\subset B_R$.
Therefore when $n\in \mathbb{Z}^k$ runs over $\{n|\, W(x,n)\subset B_R\}$, the sets $W'(x,n)$ cover the ball $B_{R-2L-2\sqrt{k}}$.
Thus
\begin{align*}
 \sum_{n\in\Z^k:W(x,n)\subset B_R}\!\!\vol(\mathrm{int}_E W(x,n)) &\geq
  s^{-k}\sum_{n\in\Z^k:W(x,n)\subset B_R}\!\!\vol(W'(x,n))\\
  &\geq s^{-k}\vol(B_{R-2L-2\sqrt{k}}).
\end{align*}
Hence
\begin{align*}
\vol(\partial(x,E)\cap B_R)&\leq \vol(B_R)-\vol\Bigl(\bigcup_{n:W(x,n)\subset B_R}\!\!\vol(\mathrm{int}_E W(x,n))\Bigr)\\
   &\leq \vol(B_R)-s^{-k}\vol(B_{R-2L-2\sqrt{k}}).
   \end{align*}
Thus
\[ \frac{1}{\vol(B_R)}\sup_{x\in X}\vol(\partial(x,E)\cap B_R) \leq
   1-s^{-k}\left(\frac{R-2L-2\sqrt{k}}{R}\right)^k, \]
\[  \limsup_{R\to \infty} \frac{1}{\vol(B_R)}\sup_{x\in X}\vol(\partial(x,E)\cap B_R)
    \leq 1-s^{-k} < \varepsilon.\]
\end{proof}

A main trick in the above proof was \textit{going down to $\mathbb{R}^k\times \{-sH\}$ from $\mathbb{R}^k\times \{-H\}$}.
This idea will be used again in Section \ref{section: embedding problem}.

\section{Small boundary property: proof of Theorem \ref{thm: zero mean dimension and topological entropy}}
\label{section: small entropy factors}

We prove Theorem \ref{thm: zero mean dimension and topological entropy} in this section.
We need to introduce the notion small boundary property
defined in \cite{Lindenstrauss--Weiss, Lindenstrauss} and used implicitly by Shub--Weiss in \cite{Shub--Weiss} (a related, weaker condition was used in \cite{Lindenstrauss lowering} that allows for some periodic points).
Let $(X,\mathbb{Z}^k,T)$ be a dynamical system.
For a subset $A\subset X$ its \textbf{orbit capacity} $\ocap(A)$ is defined by
\[ \ocap(A) = \lim_{N\to \infty}\frac{1}{N^k}\sup_{x\in X}\sum_{n\in [N]} 1_{A}(T^nx).\]
This limit exists because of the sub-additivity:
\[ \sup_{x\in X}\sum_{n\in \Omega_1\cup \Omega_2}1_A(T^nx) \leq
   \sup_{x\in X}\sum_{n\in \Omega_1}1_A(T^nx) + \sup_{x\in X}\sum_{n\in \Omega_2}1_A(T^nx)
   \quad (\Omega_1,\Omega_2\subset \mathbb{Z}^k).\]
In particular
\begin{equation} \label{eq: ocap and inf}
 \ocap(A) = \inf_{N\geq 1}\left(\frac{1}{N^k}\sup_{x\in X}\sum_{n\in [N]} 1_{A}(T^nx)\right).
\end{equation}
From Lemma \ref{lemma: Ornstein--Weiss} we also have
\begin{equation} \label{eq: ocap and balls}
 \ocap(A) = \lim_{R\to \infty}\left(\frac{1}{|B_R\cap \mathbb{Z}^k|}\sup_{x\in X}\sum_{n\in B_R\cap \mathbb{Z}^k} 1_{A}(T^nx)\right).
\end{equation}
The system $X$ is said to have the \textbf{small boundary property} if
for any point $x\in X$ and any open set $U\ni x$ there is an open neighborhood $V\subset U$
of $x$ satisfying $\ocap (\partial V) =0$.
The importance of this notion is clarified by the following
(see Lindenstrauss \cite[Section 4]{Lindenstrauss lowering};
many ideas of this theorem were already presented by Shub--Weiss \cite{Shub--Weiss}).

\begin{theorem} \label{thm: SBP and small entropy factor}
If $X$ has the small boundary property, then for any $\varepsilon>0$ and any two distinct points
$x,y\in X$ there exists a factor $\pi:X\to Y$ such that $\pi(x)\neq \pi(y)$ and $h_{\mathrm{top}}(Y)<\varepsilon$.
\end{theorem}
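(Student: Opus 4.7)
The plan is to use the small boundary property to build a continuous $\mathbb{Z}^k$-equivariant factor $\pi:X\to Y$ separating $x$ and $y$, whose image $Y$ has topological entropy less than $\varepsilon$. Following Shub--Weiss and Lindenstrauss, the idea is that the ``region of ambiguity'' in the coding map can be confined to a set of arbitrarily small orbit capacity, so that the entropy of the induced subshift factor can be driven below any prescribed threshold.

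First, I would apply SBP to obtain an open set $V\ni x$ with $y\notin\overline{V}$ and $\ocap(\partial V)=0$. Using the infimum characterization \eqref{eq: ocap and inf} of $\ocap$ together with a monotone approximation of $\partial V$ by its open neighborhoods, for any prescribed $\eta>0$ I can find an open $W\supset\partial V$ with $\ocap(W)<\eta$. Take a continuous cut-off $f:X\to[0,1]$ that equals $1$ on a compact subset of $V\setminus W$ containing $x$, equals $0$ on $X\setminus(V\cup W)$, and interpolates on $W$; then $f$ takes values in $\{0,1\}$ outside $W$, with $f(x)=1\neq 0=f(y)$. Set $\pi=I_f:X\to[0,1]^{\mathbb{Z}^k}$ by $\pi(z)=(f(T^nz))_{n\in\mathbb{Z}^k}$ and $Y=\pi(X)$; this is a continuous equivariant factor with $\pi(x)\neq\pi(y)$.

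To bound $h_{\mathrm{top}}(Y)<\varepsilon$ I would split the $(d_{[N]},\delta)$-separation count for $Y$ into two contributions along each orbit. By \eqref{eq: ocap and inf} applied to $W$, the ``boundary-visit'' set $B_N(z)=\{n\in[N]:T^nz\in W\}$ satisfies $|B_N(z)|\leq\eta N^k$ uniformly in $z$ for $N$ large. Off $B_N(z)$ the pattern $(f(T^nz))_n$ is $\{0,1\}$-valued (the ``skeleton''), while on $B_N(z)$ it takes arbitrary $[0,1]$-values (the ``ambiguity''). The ambiguity contributes at most $\eta\log(1/\delta)$ to the topological entropy. To force the skeleton entropy below $\varepsilon/2$ one iterates the construction in the Shub--Weiss manner: replace the single $V$ by a fine cover $\{V_1,\dots,V_m\}$ of $X$ by small-diameter open sets with $\ocap(\partial V_j)$ jointly small, encode all of them simultaneously by a continuous partition of unity subordinate to $\{V_j\}$, and apply the refinement inductively so that the combinatorial complexity of realized cell-visit patterns shrinks. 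Lemma~\ref{lemma: Ornstein--Weiss} guarantees that the orbit-capacity estimates pass uniformly to F\o lner exhaustions, which is essential in the $\mathbb{Z}^k$ setting. Balancing $\eta$, the cell diameter, and the refinement scale yields $h_{\mathrm{top}}(Y)<\varepsilon$.

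The main technical obstacle is the entropy bound on the ``skeleton'' part: a single separating open set $V$ is not enough, since even with $\ocap(\partial V)=0$ the visit pattern of orbits of $X$ to $V$ can carry positive topological entropy (consider a recurrent point in a full shift). Overcoming this requires the Shub--Weiss iterative refinement, which is the step that genuinely exploits SBP; one successively replaces coarse partitions by finer ones with vanishing boundary capacity, bounding the total entropy by a quantity tending to zero with the cell diameter and the boundary orbit capacity of the refining cover.
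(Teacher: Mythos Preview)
The paper does not prove this theorem; it is quoted with references to \cite[Section~4]{Lindenstrauss lowering} and \cite{Shub--Weiss}, together with the remark that the $\mathbb{Z}$-argument there carries over to $\mathbb{Z}^k$. There is thus no in-paper proof to compare against.

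Your sketch nonetheless has a genuine gap. The setup is fine: a separating open $V$ with $\ocap(\partial V)=0$, a continuous $f$ that is $\{0,1\}$-valued off a small-capacity neighborhood $W\supset\partial V$, and the observation that the ``ambiguity'' contribution from $W$ is negligible. You also correctly name the real obstacle: the skeleton (the $\{0,1\}$ visit pattern to $V$) can carry entropy up to $\log 2$ regardless of how small $\ocap(\partial V)$ is. But your proposed fix---replacing $V$ by a \emph{finer} cover $\{V_1,\dots,V_m\}$ of small diameter and ``encoding all of them simultaneously''---goes the wrong way. Refining a partition \emph{raises} the entropy of the associated symbolic factor (to as much as $\log m$); the assertion that under iterated refinement ``the combinatorial complexity of realized cell-visit patterns shrinks'' is the opposite of what actually happens.

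The entropy-lowering device in \cite{Shub--Weiss, Lindenstrauss lowering} is of a different nature: one \emph{spreads} a fixed, bounded amount of separating information over arbitrarily large time windows. Using the small boundary property one builds a marker/tower structure with small-boundary base and large size $N$, and the factor records only $O(1)$ symbols per tower traversal---just enough to distinguish the columns of $x$ and $y$. This produces a factor with entropy of order $(\log C)/N^k$ for a fixed column count $C$, which is driven below $\varepsilon$ by taking $N$ large; the small-capacity remainder is absorbed as you describe. Without this slow-coding idea, your construction does not yield $h_{\mathrm{top}}(Y)<\varepsilon$.
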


Therefore, if $X$ has the small boundary property,
any two points can be distinguished by an \textit{arbitrarily small
entropy factor}.
The paper \cite{Lindenstrauss lowering} discussed only $\mathbb{Z}$-actions, but its argument can be easily generalized to
$\mathbb{Z}^k$-actions.
(For a sketch of the proof, see also \cite[p. 257]{Lindenstrauss}.)

\begin{corollary}
If $X$ has the small boundary property then it is isomorphic to the inverse limit of
finite entropy systems.
\end{corollary}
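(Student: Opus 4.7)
The plan is to use Theorem \ref{thm: SBP and small entropy factor} to produce a countable family of finite-entropy factors of $X$ that jointly separate all pairs of distinct points, and then to identify $X$ with the inverse limit of their iterated joins. The first step is to upgrade the pointwise separation supplied by that theorem to a uniform version at each spatial scale. For each integer $n \geq 1$, consider the compact set $K_n = \{(u,v) \in X \times X : d(u,v) \geq 1/n\}$. For every $(u,v) \in K_n$, Theorem \ref{thm: SBP and small entropy factor} supplies a factor $\pi_{u,v} : X \to Y_{u,v}$ of finite topological entropy with $\pi_{u,v}(u) \neq \pi_{u,v}(v)$; by continuity of $\pi_{u,v}$ and Hausdorffness of $Y_{u,v}$, there are open sets $U \ni u$ and $V \ni v$ with $\pi_{u,v}(U) \cap \pi_{u,v}(V) = \emptyset$, so that $\pi_{u,v}$ separates every pair in $U \times V$. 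Such rectangles form an open cover of $K_n$, from which I would extract a finite subcover, yielding finitely many factors that separate every pair in $K_n$. Enumerating the factors produced as $n$ varies gives a countable family $\pi_1, \pi_2, \ldots$ whose joint separates every pair of distinct points of $X$.

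Next, define $X_n \subset Y_1 \times \cdots \times Y_n$ to be the image of the $\mathbb{Z}^k$-equivariant map $\Pi_n = (\pi_1, \ldots, \pi_n): X \to Y_1 \times \cdots \times Y_n$, equipped with the restricted product action. Each $X_n$ is a factor of $X$ of finite topological entropy, since $h_{\mathrm{top}}(X_n) \leq h_{\mathrm{top}}(Y_1 \times \cdots \times Y_n) = \sum_{i=1}^{n} h_{\mathrm{top}}(Y_i) < \infty$. Projection to the first $n$ coordinates provides consistent factor maps $X_{n+1} \to X_n$, and the natural map $\Pi : X \to \varprojlim X_n$ is continuous, $\mathbb{Z}^k$-equivariant, and surjective; surjectivity follows from a standard nested-compact-sets argument, since any coherent tower $(y_n) \in \varprojlim X_n$ produces a decreasing sequence of nonempty closed sets $\Pi_n^{-1}(y_n) \subset X$, whose nonempty intersection maps to $(y_n)$.

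Injectivity of $\Pi$ is immediate from the construction: if $\Pi(x) = \Pi(y)$ then $\pi_i(x) = \pi_i(y)$ for every $i$, and the joint separation property of the family $\{\pi_i\}$ forces $x = y$. An injective continuous equivariant surjection between compact Hausdorff $\mathbb{Z}^k$-systems is automatically an isomorphism, so $X \cong \varprojlim X_n$ as required. The bulk of the work is concentrated in Theorem \ref{thm: SBP and small entropy factor}; the only step that requires any care in the present corollary is the compactness argument on each $K_n$, which converts pointwise separation into a uniform, finite choice of factors at each scale. That step is standard, and I do not foresee any substantive obstacles beyond invoking the cited theorem correctly.
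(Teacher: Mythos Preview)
Your proof is correct and follows essentially the same route as the paper: use Theorem~\ref{thm: SBP and small entropy factor} to produce a countable family of finite-entropy factors that jointly separate points, then identify $X$ with the inverse limit of the images $X_n=(\pi_1\times\cdots\times\pi_n)(X)$. The only cosmetic difference is that the paper obtains the countable family directly via second countability of $X\times X\setminus\Delta$, while you exhaust by the compact sets $K_n$ and apply compactness at each scale; both yield the same conclusion.
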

\begin{proof}
Let $\Delta$ be the diagonal of $X\times X$.
By Theorem \ref{thm: SBP and small entropy factor} there exist a countable open covering
$\{U_n\times V_n\}_{n\geq 1}$ of $X\times X\setminus \Delta$ and factors $\pi_n:X\to Y_n$ such that
$\pi_n(U_n)\cap \pi_n(V_n)=\emptyset$ and $h_{\mathrm{top}}(Y_n)<\infty$.
Define $X_n = (\pi_1\times \pi_2\times \dots\times \pi_n)(X)$ in $Y_1\times Y_2\times\dots \times Y_n$.
These $X_n$ naturally form an inverse system and its limit is isomorphic to $X$.
Every $X_n$ has finite topological entropy.
\end{proof}

We denote by $C(X)$ the Banach space of continuous functions $f:X\to \mathbb{R}$
with the uniform norm $\norm{\cdot}$.
The following is the main result of this section.
\begin{theorem} \label{thm: zero mean dimension and ocap}
Suppose $X$ has the marker property and $\mdim(X)=0$.
Then the set of continuous functions $f:X\to \mathbb{R}$ satisfying
\[ \ocap\left(\{x\in X|\, f(x)=0\}\right)=0\]
is a dense $G_\delta$ subset of $C(X)$.
\end{theorem}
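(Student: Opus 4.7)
The plan is to reduce to showing that each set
\[
G_k := \{f \in C(X) : \ocap(\{f=0\}) < 1/k\}, \qquad k \ge 1,
\]
is open and dense in $C(X)$; Baire then gives $\bigcap_{k\ge1}G_k = \{f : \ocap(\{f=0\}) = 0\}$ dense $G_\delta$. For openness I would first establish that $\ocap$ is continuous from above on decreasing sequences of closed sets: if $A_m \downarrow A$ are closed, an extraction argument on maximizers of $\sum_{n\in[N]}1_{A_m}(T^n x)$ (using that $T^n x^* \notin A$ forces $T^n x_m \notin A_m$ eventually, since the complement is open) gives $\ocap(A_m) \to \ocap(A)$. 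Applied to $\{|f|\le 1/m\}\downarrow\{f=0\}$ this characterizes $G_k$ as the set of $f$ admitting some $\varepsilon>0$ with $\ocap(\{|f|\le\varepsilon\}) < 1/k$; then any $g$ with $\norm{g-f}_\infty < \varepsilon/2$ satisfies $\{|g|\le\varepsilon/2\}\subset\{|f|\le\varepsilon\}$ and so lies in $G_k$.

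For density, given $f\in C(X)$, $\delta>0$, and $k$, I will build $g$ with $\norm{g-f}_\infty < \delta$ and $\ocap(\{g=0\}) < 1/k$ using the three-step template (marker tiling, low-dimensional approximation, painting) from Section~\ref{section: proof of embedding and symbolic system}, but with the Voronoi decomposition supplied by the adding-one-dimension construction of Section~\ref{section: adding one dimension} (which alone makes the tiling depend continuously on $x$). Choose $\varepsilon\in(0,\delta)$ with $d(x,y)<\varepsilon\Rightarrow |f(x)-f(y)|<\delta/2$; by $\mdim(X)=0$ and \eqref{eq: mean dimension by Folner sequence}, pick $R$ so large that every $\Omega\subset\mathbb{Z}^k$ with $|\partial_R^\mathbb{Z}\Omega|/|\Omega|<1/R$ obeys $\widim_\varepsilon(X,d_\Omega) < |\mathrm{int}_1^\mathbb{Z}\Omega|/(2k)$. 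Then invoke Section~\ref{section: adding one dimension} with $M$ chosen large enough that (i) every non-empty tile $W(x,n)\subset\mathbb{R}^k$ satisfies the above F\o lner bound on its lattice $W(x,n)\cap\mathbb{Z}^k$, and (ii) by Lemma~\ref{lemma: most tiles are large}, the lattice points near tile boundaries have density $<1/(2k)$ in every large ball, uniformly in $x$. Since tiles sit inside balls of radius $L+\sqrt k$, only finitely many shapes $\Omega_1,\ldots,\Omega_\ell$ arise up to translation.

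For each shape I pick an $\varepsilon$-embedding $p_i:X\to P_i$ with $K_i := \dim P_i < |\mathrm{int}_1^\mathbb{Z}\Omega_i|/(2k)$ and, via Lemma~\ref{lemma: approximation by linear map}, a linear map $F_i:P_i\to \mathbb{R}^{\mathrm{int}_1^\mathbb{Z}\Omega_i}$ with $\norm{F_i\circ p_i - (f(T^m\cdot))_{m\in\mathrm{int}_1^\mathbb{Z}\Omega_i}}_\infty < \delta$. The crucial new ingredient is to perturb the vertex values of $F_i$ (staying within the $\delta$-tolerance) into \emph{general position}, so that for every face $\Delta\subset P_i$ the affine image $F_i(\Delta)$ misses every codimension-$(K_i{+}1)$ coordinate subspace of $\mathbb{R}^{\mathrm{int}_1^\mathbb{Z}\Omega_i}$; the dimension count $K_i + (|\mathrm{int}_1^\mathbb{Z}\Omega_i|-K_i-1) < |\mathrm{int}_1^\mathbb{Z}\Omega_i|$ makes this generic over finitely many face-subspace pairs. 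Consequently every vector in $F_i(P_i)$ has at most $K_i$ vanishing coordinates. Now paint $g$ as in Section~\ref{section: proof of embedding and symbolic system}: if the origin lies in the deep interior of some tile of $x$'s Voronoi structure with $W(x,n)\cap\mathbb{Z}^k = a+\Omega_i$, set $g(x) := F_i(p_i(T^a x))_{-a}$; otherwise set $g(x) := f(x)$. Continuity of $g$ follows from local constancy of the deep-interior tile data (one may have to choose $\phi$ so that tile boundaries are generically transverse to $\mathbb{Z}^k$), and $\norm{g-f}_\infty < \delta$ is automatic.

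Finally, the analogue of Claim~\ref{claim: description of I_g zero dimensional case} gives $g(T^m x) = F_i(p_i(T^a x))_{m-a}$ for every $m$ in the deep interior of its tile, so the zero set $\{m\in[N]:g(T^m x)=0\}$ is covered by (a) the lattice boundary region, of density $<1/(2k)$ by~(ii), and (b) at most $K_i < |\mathrm{int}_1^\mathbb{Z}\Omega_i|/(2k)$ points in each tile of shape $\Omega_i$, by general position; combining with \eqref{eq: ocap and balls} yields $\ocap(\{g=0\}) < 1/k$, so $g\in G_k$. The main obstacle I expect is the coordinated choice of the parameters $\varepsilon, R, M, L, H$: the Voronoi construction requires $M$ large enough for both Lemma~\ref{lemma: Voronoi tiling}(4) and Lemma~\ref{lemma: most tiles are large}, while every tile shape (which only becomes discrete after $M, L, H$ are fixed) must still satisfy the $R$-F\o lner bound that unlocks $\mdim(X)=0$, and assembling a continuously varying tiling with all these properties simultaneously is precisely what the adding-one-dimension technique of Section~\ref{section: adding one dimension} is built for.
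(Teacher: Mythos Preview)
Your proposal contains a fundamental misconception that breaks the construction: you treat the tiles $W(x,n)$ produced by the adding-one-dimension construction as if they behaved like the tiles of Section~\ref{section: proof of embedding and symbolic system}, i.e.\ as if they were locally constant in $x$ and hence took only finitely many lattice shapes $\Omega_1,\dots,\Omega_\ell$ up to translation. This is precisely what the added-dimension device does \emph{not} give you. The whole point of Section~\ref{section: adding one dimension} is that $W(x,n)$ depends \emph{continuously} on $x$ in the Hausdorff sense; consequently $W(x,n)\cap\mathbb{Z}^k$ changes as lattice points drift across $\partial W(x,n)$, and uncountably many shapes occur. Your parenthetical hope that one ``choose $\phi$ so that tile boundaries are generically transverse to $\mathbb{Z}^k$'' cannot rescue this: even if for each fixed $x$ no lattice point sits on a boundary, varying $x$ still moves lattice points across boundaries. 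So the clause ``$W(x,n)\cap\mathbb{Z}^k = a+\Omega_i$'' does not select a locally constant index $(i,a)$, and your piecewise definition of $g$ (paint $F_i$ in the deep interior, keep $f$ otherwise, with no interpolation) is not continuous.

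The paper's proof avoids this by never trying to match the tile shapes. Instead it fixes a single cube size $N$ in advance, builds \emph{one} map $F:X\to\mathbb{R}^{[N]}$ (via Lemma~\ref{lemma: approximation by linear map} and the genericity Lemma~\ref{lemma: simplicial lemma for small entropy factor}) with at most $\widim_\varepsilon(X,d_{[N]})<\varepsilon N^k$ vanishing coordinates at every point, and then paints translated copies of $F$ into the $N$-subcubes $a+[N]$ (with $a\equiv n\pmod N$) sitting inside each $W(x,n)$. Continuity of $g$ is obtained not from local constancy but from a cut-off $\alpha(\dist(0,\partial W(x,n)))$ interpolating between $f$ and the painted value, which is well-defined because $\dist(0,\partial W(x,n))$ varies continuously. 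The zero count then splits into the $\varepsilon$-fraction coming from $F$ on the $N$-cubes plus the boundary region controlled by Lemma~\ref{lemma: most tiles are large}. Your ``finitely many shapes $+$ per-shape $\varepsilon$-embedding'' scheme is the Section~\ref{section: proof of embedding and symbolic system} paradigm and only works when the tiling is locally constant, which requires a zero-dimensional factor; to make the argument go through here you must switch to the single-$N$, cut-off-interpolated painting used in the paper's Proposition~\ref{prop: main proposition for small entropy factors}.
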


The following corollary contains Theorem \ref{thm: zero mean dimension and topological entropy}.

\begin{corollary}\label{cor: zero mean dimension implies SBP}
Suppose $X$ has the marker property and $\mdim(X)=0$.
Then $X$ has the small boundary property.
In particular it is isomorphic to the inverse limit of finite entropy systems.
\end{corollary}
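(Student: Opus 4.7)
The plan is to extract the small boundary property from Theorem \ref{thm: zero mean dimension and ocap} by a standard shift-and-threshold argument, and then to invoke the corollary stated just after Theorem \ref{thm: SBP and small entropy factor} to conclude that $X$ is an inverse limit of finite topological entropy systems. No new dynamical input is needed; everything rests on constructing, near any prescribed continuous function, a perturbation with a given level set of zero orbit capacity.

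To verify the small boundary property, I fix an arbitrary point $x \in X$ and an open neighborhood $U$ of $x$; the goal is to produce an open set $V$ with $x \in V \subset U$ and $\ocap(\partial V) = 0$. First I choose, by Urysohn's lemma, a continuous function $g : X \to [0,1]$ with $g(x) = 1$ and $g \equiv 0$ on $X \setminus U$. The key maneuver is a shift: Theorem \ref{thm: zero mean dimension and ocap} applied to $g - \tfrac{1}{2} \in C(X)$ produces, for every $\varepsilon \in (0, \tfrac{1}{2})$, a continuous function $h$ with $\norm{h - (g - \tfrac{1}{2})} < \varepsilon$ and $\ocap(\{h = 0\}) = 0$. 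Setting $f := h + \tfrac{1}{2}$ gives $\norm{f - g} < \varepsilon$ and $\ocap(\{f = \tfrac{1}{2}\}) = 0$.

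I then define the open set $V := \{y \in X \mid f(y) > \tfrac{1}{2}\}$. Since $g(x) = 1$ and $\norm{f - g} < \tfrac{1}{2}$, we have $f(x) > \tfrac{1}{2}$, so $x \in V$; since $g$ vanishes off $U$ and $\norm{f - g} < \tfrac{1}{2}$, we have $f < \tfrac{1}{2}$ off $U$, so $V \subset U$; and $\partial V \subset \{f = \tfrac{1}{2}\}$, which has zero orbit capacity by construction. This verifies the small boundary property, whereupon the inverse-limit conclusion is exactly the corollary stated immediately after Theorem \ref{thm: SBP and small entropy factor}. I do not anticipate any substantive obstacle in the deduction itself; the hard work is packed into Theorem \ref{thm: zero mean dimension and ocap}, and the only mild subtlety here is the shift by $\tfrac{1}{2}$, which converts a statement about zero-level sets into one about an arbitrary threshold carrying the boundary $\partial V$.
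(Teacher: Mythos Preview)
Your argument is correct and essentially identical to the paper's: the paper chooses a Urysohn function $f$ with $f(x)=1$ and $f=-1$ on $X\setminus U$, perturbs it via Theorem~\ref{thm: zero mean dimension and ocap} to $g$ with $\norm{f-g}<1$ and $\ocap(\{g=0\})=0$, and sets $V=\{g>0\}$; your shift by $\tfrac12$ is just a cosmetic reparametrization of the same idea.
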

\begin{proof}
Take $x\in X$ and its open neighborhood $U$.
There is a continuous function $f:X\to \mathbb{R}$ such that
$f(x)=1$ and $f=-1$ over $X\setminus U$.
By Theorem \ref{thm: zero mean dimension and ocap}
we can find a continuous function $g:X\to \mathbb{R}$ such that $\norm{f-g} < 1$ and
$\{g=0\}$ has zero orbit capacity.
Set $V=\{g>0\}$.
We have $x\in V\subset U$ and $\ocap(\partial V)=0$.
\end{proof}

The rest of this section consists of the proof of Theorem \ref{thm: zero mean dimension and ocap}.
Let $(X,\mathbb{Z}^k,T)$ be a dynamical system.
From the equation (\ref{eq: ocap and inf}),
for any closed set $A\subset X$ and any $\varepsilon>0$
there is an open set $U\supset A$ satisfying
$\ocap(U)<\ocap(A)+\varepsilon$.
This implies the next lemma.
\begin{lemma}\label{lemma: ocap stable}
For any $c>0$ the set
\[ \{f\in C(X)|\, \ocap(\{f=0\})<c\}\]
is open in $C(X)$.
\end{lemma}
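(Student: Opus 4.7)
The plan is a short openness argument that separates the zero set of $f$ from the set where $\|g-f\|$ perturbations could create new zeros. Fix $f \in C(X)$ with $\ocap(\{f=0\})<c$, and write $A = \{f=0\}$, which is closed. The aim is to exhibit a uniform neighborhood of $f$ in $C(X)$ contained in $\{g : \ocap(\{g=0\})<c\}$.

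First, I invoke the remark made just before the lemma: for any closed $A\subset X$ and any $\varepsilon>0$ there exists an open $U\supset A$ with $\ocap(U)<\ocap(A)+\varepsilon$. Choosing $\varepsilon$ with $\ocap(A)+\varepsilon<c$, I obtain an open neighborhood $U$ of $A=\{f=0\}$ satisfying $\ocap(U)<c$. The compact set $K := X\setminus U$ is disjoint from $\{f=0\}$, so the continuous function $|f|$ attains a strictly positive minimum $\delta := \min_{x\in K} |f(x)|>0$ on $K$.

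Now let $g \in C(X)$ with $\|f-g\|<\delta$. For $x\in K$ we have $|g(x)|\geq |f(x)|-|f(x)-g(x)|>\delta-\delta = 0$, so $g$ is nowhere zero on $K$. Hence $\{g=0\}\subset U$. Monotonicity of orbit capacity (immediate from the definition $\ocap(A) = \lim_N N^{-k}\sup_x\sum_{n\in [N]}1_A(T^nx)$, since $A\subset B$ implies $1_A\leq 1_B$ pointwise) then gives $\ocap(\{g=0\})\leq \ocap(U)<c$. Thus the ball of radius $\delta$ around $f$ in $C(X)$ lies inside $\{g\in C(X) : \ocap(\{g=0\})<c\}$, proving the set is open.

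The argument is essentially a one-liner once one has the semicontinuity statement $\ocap(A) = \inf_{U\supset A, U\,\mathrm{open}}\ocap(U)$ for closed $A$; there is no real obstacle, only the small bookkeeping point that monotonicity of $\ocap$ is being used implicitly. No appeal to the marker property or to Voronoi tilings is needed here — this lemma is a pure topological openness statement about the functional $f\mapsto \ocap(\{f=0\})$.
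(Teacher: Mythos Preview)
Your proof is correct and follows essentially the same argument as the paper: find an open $U\supset\{f=0\}$ with $\ocap(U)<c$, set $\delta=\inf_{X\setminus U}|f|>0$, and observe that $\norm{f-g}<\delta$ forces $\{g=0\}\subset U$. The only difference is that you spell out the monotonicity of $\ocap$ and the compactness reason for $\delta>0$, which the paper leaves implicit.
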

\begin{proof}
Suppose a continuous function $f:X\to \mathbb{R}$ satisfies
$\ocap(\{f=0\})<c$.
Then there is an open set $U\supset \{f=0\}$ satisfying
$\ocap(U)<c$.
Let $\delta>0$ be the infimum of $|f(x)|$ over $x\in X\setminus U$.
Suppose a continuous function $g:X\to \mathbb{R}$ satisfies $\norm{f-g}<\delta$.
Then $\{g=0\}\subset U$. Hence $\ocap(\{g=0\})<c$.
\end{proof}
It follows from Lemma~\ref{lemma: ocap stable} that the set
\begin{multline*}
 \{f\in C(X)|\, \ocap(\{f=0\})=0\} = \\
   \bigcap_{n\geq 1} \{f\in C(X)|\, \ocap(\{f=0\})<1/n\}
\end{multline*}
is a $G_\delta$ subset of $C(X)$.
Therefore Theorem~\ref{thm: zero mean dimension and ocap}
follows from the next proposition.

\begin{proposition}  \label{prop: main proposition for small entropy factors}
Suppose $X$ has the marker property and $\mdim(X)=0$.
Let $\delta$ be a positive number, and $f:X\to \mathbb{R}$ a continuous function.
Then there exists a continuous function $g:X\to \mathbb{R}$ satisfying $\norm{f-g} <\delta$ and
$\ocap(\{g=0\})<\delta$.
\end{proposition}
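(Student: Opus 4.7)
The plan is to follow the three-step scheme of Proposition~\ref{prop: main proposition for embedding of zero dimensional extension}, with the adding-one-dimension tiling of Section~\ref{section: adding one dimension} in place of the clopen tiling available for zero-dimensional factors, and with an additional general-position ingredient that drives zero coordinates to be rare on each tile. The hypothesis $\mdim(X)=0$ is what powers this last ingredient.

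Fix $\varepsilon\in(0,\delta)$ with $d(x,y)<\varepsilon\Rightarrow|f(x)-f(y)|<\delta/2$, and a small $\delta'\in(0,\delta/4)$. Since $\mdim(X)=0$, Lemma~\ref{lemma: Ornstein--Weiss} yields $R>0$ such that every finite $\Omega\subset\mathbb{Z}^k$ with $|\partial_R^{\mathbb{Z}}\Omega|/|\Omega|<1/R$ satisfies $\widim_\varepsilon(X,d_\Omega)<\delta'\,|\mathrm{int}^{\mathbb{Z}}_1\Omega|$. Choose the marker parameter $M$ of Section~\ref{section: adding one dimension} (and the resulting $L,H$, and a large $E$) so that every integer Voronoi tile $W^{\mathbb{Z}}(x,n):=W(x,n)\cap\mathbb{Z}^k$ meets this boundary condition and so that, by Lemma~\ref{lemma: most tiles are large}, $\limsup_R\vol(B_R)^{-1}\sup_{x\in X}\vol(\partial(x,E)\cap B_R)<\delta'$. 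Since each $W(x,n)$ sits inside $B_{L+\sqrt{k}}(n)$, only finitely many combinatorial types $\Omega_1,\Omega_2,\ldots$ of integer tiles arise. For each $i$, pick an $\varepsilon$-embedding $p_i:(X,d_{\Omega_i})\to P_i$ with $\dim P_i<\delta'\,|\mathrm{int}^{\mathbb{Z}}_1\Omega_i|$, and by Lemma~\ref{lemma: approximation by linear map} a linear map $g_i:P_i\to\mathbb{R}^{\mathrm{int}^{\mathbb{Z}}_1\Omega_i}$ with $\norm{g_i\circ p_i-I_f|_{\mathrm{int}^{\mathbb{Z}}_1\Omega_i}}_\infty<\delta/2$.

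The crucial new step replaces the role of Lemma~\ref{lemma: embedding of simplicial complex}: perturb the vertex values of each $g_i$ by less than $\delta/4$ so that, for every simplex of $P_i$ with vertices $v_0,\ldots,v_d$, the vectors $w_m:=\bigl((g_i(v_0))_m,\ldots,(g_i(v_d))_m\bigr)\in\mathbb{R}^{d+1}$ indexed by $m\in\mathrm{int}^{\mathbb{Z}}_1\Omega_i$ are in general position, i.e.~any $d+1$ of them are linearly independent. This condition holds on a full-Lebesgue subset of the vertex-value parameter space, and it ensures that for every $y=\sum_j\lambda_j v_j$ in the simplex, the number of $m$ with $(g_i(y))_m=\langle\lambda,w_m\rangle=0$ is at most $d\leq\dim P_i<\delta'\,|\mathrm{int}^{\mathbb{Z}}_1\Omega_i|$. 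Set $F_i=g_i\circ p_i$.

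Define $g:X\to\mathbb{R}$, in the spirit of Proposition~\ref{prop: main proposition for embedding of zero dimensional extension}, by $g(x)=F_i(T^bx)_{-b}$ when $0$ lies deep in an integer Voronoi tile $W^{\mathbb{Z}}(x,n_0)=b+\Omega_i$, and blend continuously back to $f(x)$ on an $E$-neighbourhood of the tile boundaries; the transition region is contained in $\partial(x,E)$, and the Hausdorff continuity of $W(x,n)$ in $x$ (Section~\ref{section: adding one dimension}) makes $g$ a continuous function on $X$. Clearly $\norm{f-g}<\delta$. For the capacity bound, fix $x\in X$ and $R$ large. Lattice points $n\in B_R\cap\mathbb{Z}^k$ lying deep inside some tile $W^{\mathbb{Z}}(x,n_0)$ satisfy $g(T^nx)=F_i(T^bx)_{n-b}$, so by the general-position property each such tile contributes at most $\dim P_i<\delta'\,|W^{\mathbb{Z}}(x,n_0)|$ zeros, totalling $\leq\delta'\,|B_R\cap\mathbb{Z}^k|$ over $B_R$; the remaining lattice points lie in $\partial(x,E)$ and are bounded by $\delta'\vol(B_R)$ via Lemma~\ref{lemma: most tiles are large}. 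Dividing by $|B_R\cap\mathbb{Z}^k|$ and invoking~\eqref{eq: ocap and balls} yields $\ocap(\{g=0\})\leq 2\delta'<\delta$. The principal obstacle is the general-position perturbation: it is the new input beyond the embedding arguments of Section~\ref{section: proof of embedding and symbolic system} and requires face-by-face bookkeeping, though each individual check is a measure-zero calculation on a finite-dimensional parameter space. The continuous patching of the $F_i$'s across changes of tile combinatorics is a secondary but delicate issue, handled by arranging that the blending region be absorbed into the set $\partial(x,E)$ already controlled by Lemma~\ref{lemma: most tiles are large}.
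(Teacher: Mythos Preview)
Your general-position ingredient is exactly right and matches the paper's Lemma~\ref{lemma: simplicial lemma for small entropy factor}. The real gap is the continuity of $g$. You set $g(x)=F_i(T^bx)_{-b}$ where the pair $(i,b)$ is determined by the \emph{entire} integer tile $W^{\mathbb{Z}}(x,n_0)=b+\Omega_i$. In the adding-one-dimension construction the convex sets $W(x,n)$ vary continuously with $x$, so as $x$ moves the lattice set $W^{\mathbb{Z}}(x,n_0)$ will change whenever any lattice point crosses $\partial W(x,n_0)$. Such a crossing typically happens far from the origin, yet it changes $(i,b)$ and hence makes $F_i(T^bx)_{-b}$ jump at a point $x$ where $0$ is still deep inside the tile. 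Blending with $f$ on the $E$-neighbourhood of the boundary does not help: that cutoff equals $1$ at the origin in exactly the situation where the discontinuity occurs. (A related secondary issue: in the adding-one-dimension tiling not every nonempty $W(x,n)$ is large---Lemma~\ref{lemma: Voronoi tiling}(4) only guarantees that tiles hit at level $-sH$ contain big balls---so you cannot arrange that \emph{every} integer tile satisfies the F{\o}lner condition.)

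The paper's fix is to abandon the per-tile maps $F_i$ altogether. It fixes a single $N$ with $\widim_\varepsilon(X,d_{[N]})<\varepsilon N^k$ and a single map $F:X\to\mathbb{R}^{[N]}$ satisfying the general-position property of Lemma~\ref{lemma: simplicial lemma for small entropy factor}. For $x$ with $0\in\mathrm{int}\,W(x,n)$ it sets
\[
g(x)=\bigl(1-\alpha(\dist(0,\partial W(x,n)))\bigr)f(x)+\alpha(\dist(0,\partial W(x,n)))\,F(T^ax)_{-a},
\]
where $a\equiv n\pmod N$ and $0\in a+[N]$. The point is that this uses only \emph{local} data: the index $n$ (which is locally constant once $0$ is in the interior), the distance to the boundary, and $a$ determined by $n$ alone. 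The global shape of the tile never enters, so $g$ is continuous. The zero-count then proceeds tile by tile by covering $\mathrm{int}_{N\sqrt{k}}W(x,n)$ with disjoint translates $a+[N]$, $a\equiv n\pmod N$, on each of which $I_g(x)|_{a+[N]}=F(T^ax)$ has at most $\widim_\varepsilon(X,d_{[N]})<\varepsilon N^k$ zero coordinates; the leftover lies in $\partial(x,\sqrt{k}(N+1))$ and is handled by Lemma~\ref{lemma: most tiles are large}.
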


We need one preparation:
\begin{lemma} \label{lemma: simplicial lemma for small entropy factor}
Let $n$ be a positive integer, and
$P$ a simplicial complex. Then almost every linear map $f:P\to \mathbb{R}^n$
satisfies
\[ |\{1\leq i\leq n|\, f(x)_i=0\}| \leq \dim P \quad (\forall x\in P).\]
\end{lemma}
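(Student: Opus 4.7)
The plan is to parametrize the space of linear maps $f\colon P\to\mathbb{R}^n$ by the tuple of values at the vertices $v_1,\ldots,v_s$ of $P$, identifying this space with $(\mathbb{R}^n)^s$ equipped with Lebesgue measure (exactly as in the proof of Lemma~\ref{lemma: embedding of simplicial complex}). The conclusion we want is that a certain ``bad'' subset of this parameter space has measure zero, and the strategy is to exhibit this subset as a finite union of proper algebraic subvarieties.

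First I would reformulate the conclusion in terms of faces. Any $x\in P$ lies in the relative interior of a unique face $\Delta$ with vertices $v_{i_0},\ldots,v_{i_d}$ (where $d=\dim\Delta\le\dim P$), and $x=\sum_{j=0}^d\lambda_j v_{i_j}$ with $\lambda_j>0$ and $\sum_j\lambda_j=1$; linearity then gives $f(x)_i=\sum_j\lambda_j f(v_{i_j})_i$. Hence the conclusion of the lemma fails for $f$ iff there exist a face $\Delta=[v_{i_0},\ldots,v_{i_d}]$ of $P$ and a subset $S\subset\{1,\ldots,n\}$ with $|S|>\dim P$ such that the origin of $\mathbb{R}^{|S|}$ lies in the convex hull of the coordinate projections $\pi_S(f(v_{i_0})),\ldots,\pi_S(f(v_{i_d}))$, where $\pi_S\colon\mathbb{R}^n\to\mathbb{R}^{|S|}$ restricts to the coordinates indexed by $S$. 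Since $P$ has finitely many faces and $\{1,\ldots,n\}$ finitely many subsets, it suffices to show that for each such pair $(\Delta,S)$ the associated bad set has measure zero.

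Fix such a pair. Since $|S|>\dim P\ge d$ we have $|S|\ge d+1$. The bad condition is contained in the weaker requirement $0\in\mathrm{aff}\bigl(\pi_S(f(v_{i_0})),\ldots,\pi_S(f(v_{i_d}))\bigr)$, which is a Zariski-closed condition on the tuple $(\pi_S(f(v_{i_j})))_{j=0}^d\in(\mathbb{R}^{|S|})^{d+1}$, cut out by the vanishing of the maximal minors of the augmented matrix encoding the system $\sum_j\lambda_j\pi_S(f(v_{i_j}))=0$, $\sum_j\lambda_j=1$. It is a \emph{proper} subvariety: the explicit choice $\pi_S(f(v_{i_j}))=e_{j+1}$ (using $|S|\ge d+1$) yields the affine hyperplane $\{\sum y_j=1\}$, which misses the origin. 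A proper Zariski-closed subset of $(\mathbb{R}^{|S|})^{d+1}$ has Lebesgue measure zero, and by Fubini, letting the values of $f$ at the remaining vertices and the remaining $n-|S|$ coordinates vary freely, the bad set has measure zero in $(\mathbb{R}^n)^s$ as well. The ``hard'' step is purely bookkeeping: checking that the affine-hull relaxation still controls the convex-hull condition and that one can independently prescribe the projected vertex values---both are immediate.
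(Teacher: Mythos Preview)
Your proof is correct and takes essentially the same approach as the paper's: parametrize linear maps by vertex values in $(\mathbb{R}^n)^s$ and show the bad locus is a finite union of proper algebraic subvarieties defined by ``$0$ lies in the convex (or affine) hull of certain coordinate projections of vertex values.'' The paper enumerates over subsets $A\subset\{1,\dots,n\}$ together with $|A|$-tuples of vertices, whereas you enumerate over faces $\Delta$ of $P$ together with subsets $S$ of size $>\dim P$; this is only a bookkeeping difference, and your explicit passage to the affine hull makes the genericity verification slightly cleaner.
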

\begin{proof}
Let $v_1,\dots,v_s$ be the vertices of $P$.
For almost every choice of vectors $u_1,\dots,u_s\in \mathbb{R}^n$ the following holds:
For every $A\subset \{1,2,\dots,n\}$ and $1\leq i_1<i_2<\dots<i_{|A|}\leq s$ the convex hull of
$u_{i_1}|_A,\dots,u_{i_{|A|}}|_A$ does not contain the origin in $\mathbb{R}^A$.
Suppose the vectors $u_1,\dots,u_s$ satisfy this condition, and define a linear map $f:P\to \mathbb{R}^n$
by $f(v_i)=u_i$.
Take any $x\in P$ and let $A=\{1\leq i\leq n|\, f(x)_i=0\}$.
Then $f(x)|_{A}=0$.
The assumption on $u_1,\dots,u_s$ implies
$|A|\leq \dim P$.
\end{proof}

\begin{proof}[Proof of Proposition \ref{prop: main proposition for small entropy factors}]
Let $d$ be a distance on $X$.
Take $0<\varepsilon<\delta/2$ such that
\[ d(x,y)<\varepsilon \Longrightarrow |f(x)-f(y)|<\delta.\]
Since $\mdim(X)=0$, we can find $N>0$ satisfying
\[ \frac{\widim_\varepsilon(X,d_{[N]})}{N^k}<\varepsilon.\]
We set $I_f(x)=(f(T^n x))_{n\in \mathbb{Z}^k}$ for $x\in X$. If $x,y\in X$ satisfy
$d_{[N]}(x,y)<\varepsilon$ then we have
\[ \norm{I_f(x)|_{[N]}-I_f(x)|_{[N]}}_\infty \stackrel{\mathrm{def}}{=}
   \max_{n\in [N]} |f(T^n x)-f(T^n y)| < \delta.\]
As in Section \ref{section: proof of embedding and symbolic system},
by Lemmas \ref{lemma: approximation by linear map} and \ref{lemma: simplicial lemma for small entropy factor}
we can construct a continuous map $F:X\to \mathbb{R}^{[N]}$ satisfying
\begin{itemize}
  \item  $\norm{F(x)-I_f(x)|_{[N]}}_\infty <\delta$ for all $x\in X$,
  \item  $|\{n\in [N]|\, F(x)_n=0\}| \leq \widim_\varepsilon(X,d_{[N]}) < \varepsilon N^k$ for all $x\in X$.
\end{itemize}
Set $E=\sqrt{k}(N+1)$.
We choose a sufficiently large positive integer $M$ and apply the Voronoi
tiling construction of Section \ref{section: adding one dimension} to $X$.
By Lemma \ref{lemma: most tiles are large} we can assume
\begin{equation} \label{eq: bound on the edge}
 \limsup_{R\to \infty} \frac{1}{\vol(B_R)}\sup_{x\in X}\vol(\partial(x,E)\cap B_R) < \varepsilon.
\end{equation}
Here recall that we have the tiling
$\mathbb{R}^k = \bigcup_{n\in \mathbb{Z}^k} W(x,n)$ and
$\partial(x,E) = \bigcup_{n\in \mathbb{Z}^k} \partial_E W(x,n)$.
By Lemma \ref{lemma: Voronoi tiling} (3), $\diam (W(x,n))<2L+2\sqrt{k}$.

We define a continuous function $g:X\to \mathbb{R}$ by painting $F$ in the tiles $W(x,n)$.
Let $\alpha:[0,\infty)\to [0,1]$ be a continuous function such that
$\alpha(0)=0$ and $\alpha(t)=1$ for $t\geq 1$.
Take $x\in X$. If $0\in \partial W(x,n)$ for some $n\in \mathbb{Z}^k$ then we set $g(x)=f(x)$.
Otherwise there uniquely exists $n\in \mathbb{Z}^k$ satisfying $0\in \mathrm{int} W(x,n)$.
Choosing $a\in \mathbb{Z}^k$ satisfying $a\equiv n \pmod N$ and $0\in a+[N]$,
we set
\begin{multline*} g(x) = \{1-\alpha(\dist(0,\partial W(x,n)))\}f(x) + \\
+\alpha(\dist(0,\partial W(x,n)))F(T^ax)_{-a},
\end{multline*}
with $\dist$ denoting the Euclidean distance.
Since $W(x,n)$ depends continuously on $x$, the function $g(x)$ is continuous.
It satisfies
$\norm{f-g}<\delta$ because $|F(T^ax)_{-a}-f(x)|\leq \norm{F(T^ax)-I_f(T^ax)|_{[N]}}_\infty <\delta$.
We will show $\ocap(\{g=0\})<\delta$.

\begin{claim} \label{claim: formula of I_g for small entropy factor}
Let $a,n\in \mathbb{Z}^k$ such that $a\equiv n \pmod N$ and
$a+[N]$ is contained in $\mathrm{int}_1 W(x,n)$. Then
\[ I_g(x)|_{a+[N]} = F(T^a x).\]
Here we naturally consider $I_g(x)|_{a+[N]} = (g(T^{a+n}x))_{n\in [N]}$ as a vector in $\mathbb{R}^{[N]}$.
\end{claim}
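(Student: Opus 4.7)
The plan is to fix an arbitrary $m\in a+[N]$ and evaluate $g(T^m x)$ directly from the definition of $g$, then check that it coincides with $F(T^ax)_{m-a}$. The two key ingredients will be the $\mathbb{Z}^k$-equivariance of the Voronoi tiling, $W(T^m x,n-m) = -m + W(x,n)$, and the hypothesis $a+[N]\subset \mathrm{int}_1 W(x,n)$, which in particular forces $m\in \mathrm{int}_1 W(x,n)$ and hence $\dist(m,\partial W(x,n))\geq 1$.

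First I would use equivariance to transfer the location of $m$ inside the tile at $x$ to the location of $0$ inside the corresponding tile at $T^m x$. From $m \in \mathrm{int}_1 W(x,n)$ one gets $0 \in \mathrm{int}_1 W(T^m x, n-m)$, and in particular $0$ is in the interior of a tile; so the definition of $g(T^m x)$ falls into the ``otherwise'' branch, with the unique tile index being $n' := n-m$.

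Next I would identify the auxiliary lattice point $a'$ used in the definition of $g(T^m x)$: it is the unique element of $\mathbb{Z}^k$ satisfying $a'\equiv n' \pmod N$ and $0\in a'+[N]$ (uniqueness follows since these two conditions pin down $a'$ coordinatewise in the set $\{-(N-1),\dots,0\}^k$). Since $a\equiv n\pmod N$, the choice $a' = a - m$ satisfies $a' \equiv n - m = n' \pmod N$, and $0\in a'+[N]$ is the rewriting of $m\in a+[N]$, which is the hypothesis. Therefore $a' = a - m$.

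Finally I would compute the weight. By equivariance, $\dist(0,\partial W(T^m x, n-m)) = \dist(m,\partial W(x,n)) \geq 1$, so $\alpha$ evaluates to $1$ and the convex combination defining $g(T^m x)$ collapses to $F(T^{a'}T^m x)_{-a'} = F(T^a x)_{m-a}$. Letting $m$ range over $a+[N]$, the index $m-a$ ranges over $[N]$, which gives the desired equality $I_g(x)|_{a+[N]} = F(T^a x)$. The only subtlety is the bookkeeping of the residue class modulo $N$ and the uniqueness of $a'$; once this is settled, the equivariance of the tiling and the choice of $\alpha$ make the rest automatic.
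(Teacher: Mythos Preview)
Your proof is correct and follows essentially the same approach as the paper's own argument: fix $m\in a+[N]$, use the equivariance $W(T^m x,n-m)=-m+W(x,n)$ to reduce to the origin lying in $\mathrm{int}_1 W(T^m x,n-m)$, identify the auxiliary point as $a-m$, and note that $\alpha$ evaluates to $1$ so that $g(T^m x)=F(T^a x)_{m-a}$. Your version is somewhat more explicit about the uniqueness of $a'$ and the reason $\alpha=1$, but the underlying argument is identical.
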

\begin{proof}
Take $m\in a+[N] \subset \mathrm{int}_1W(x,n)$.
Then $W(T^m x,n-m)=-m+W(x,n)$ and hence $0\in \mathrm{int}_1W(T^m x,n-m)$.
Since $0\in (a-m) +[N]$ and $a-m\equiv n-m \pmod N$,
\[ g(T^m x)= F(T^{a-m} T^m x)_{-a+m} = F(T^a x)_{-a+m}.\]
Thus we get $I_g(x)|_{a+[N]} = F(T^a x)$.
\end{proof}

\begin{claim} \label{claim: main claim for small entropy factor}
For all $x\in X$ and $n\in \mathbb{Z}^k$
\[  |\{m\in \mathbb{Z}^k\cap \mathrm{int}_{N\sqrt{k}}W(x,n)|\, I_g(x)_m=0\}| \leq \varepsilon\, \vol(W(x,n)).\]
\end{claim}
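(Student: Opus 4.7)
The plan is to reduce the claim to a counting argument over the cubes of a shifted sublattice tiling inside $W(x,n)$, and then apply Claim~\ref{claim: formula of I_g for small entropy factor} together with the pointwise bound on the number of zero coordinates of $F$.

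First I would fix $x\in X$ and $n\in\mathbb{Z}^k$ and consider the partition of $\mathbb{Z}^k$ into blocks $a+[N]$ as $a$ ranges over $n+N\mathbb{Z}^k$. Given any $m\in\mathrm{int}_{N\sqrt{k}}W(x,n)\cap\mathbb{Z}^k$, there is a unique such $a$ with $m\in a+[N]$. The key geometric step is to observe that this containing block lies entirely in $\mathrm{int}_1 W(x,n)$: by definition $B_{N\sqrt{k}}(m)\subset W(x,n)$, and for any $p\in a+[N]$ and any $q\in B_1(p)$,
\[
|q-m|\le|q-p|+|p-m|<1+(N-1)\sqrt{k}\le N\sqrt{k},
\]
so $B_1(p)\subset B_{N\sqrt{k}}(m)\subset W(x,n)$, i.e.\ $a+[N]\subset\mathrm{int}_1 W(x,n)$. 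Call such a block $a+[N]$ \emph{good}.

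Next I invoke Claim~\ref{claim: formula of I_g for small entropy factor}: for every good block $a+[N]$,
\[
I_g(x)|_{a+[N]}=F(T^a x),
\]
so the number of $m\in a+[N]$ with $I_g(x)_m=0$ equals the number of coordinates of $F(T^a x)$ that vanish, which by construction of $F$ is at most $\widim_\varepsilon(X,d_{[N]})<\varepsilon N^k$. Therefore
\[
|\{m\in\mathrm{int}_{N\sqrt{k}}W(x,n)\cap\mathbb{Z}^k:I_g(x)_m=0\}|\le(\text{number of good blocks})\cdot\varepsilon N^k.
\]
Since the blocks $a+[N]$ with $a\in n+N\mathbb{Z}^k$ tile $\mathbb{R}^k$ with volume $N^k$ each, the number of good blocks (being a disjoint family in $\mathrm{int}_1 W(x,n)\subset W(x,n)$) is bounded by $\vol(W(x,n))/N^k$. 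Combining these bounds yields
\[
|\{m\in\mathrm{int}_{N\sqrt{k}}W(x,n)\cap\mathbb{Z}^k:I_g(x)_m=0\}|\le\varepsilon\,\vol(W(x,n)),
\]
which is the asserted inequality.

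The only delicate point is the geometric step showing that every block containing an $\mathrm{int}_{N\sqrt{k}}$-interior lattice point is good, and this is precisely where the choice of the $N\sqrt{k}$-buffer (larger than the diameter $(N-1)\sqrt{k}$ of a block plus the margin $1$) enters; everything else is a direct combination of Claim~\ref{claim: formula of I_g for small entropy factor} with the construction of $F$.
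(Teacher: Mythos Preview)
Your proof is correct and follows the same approach as the paper: cover $\mathbb{Z}^k\cap\mathrm{int}_{N\sqrt{k}}W(x,n)$ by $N$-blocks $a+[N]$ with $a\equiv n\pmod N$, apply Claim~\ref{claim: formula of I_g for small entropy factor} on each block, use the zero-coordinate bound for $F$, and finish with a volume estimate. One small imprecision worth tightening: your volume step needs the continuous cubes $a+[0,N)^k$ for good $a$ to lie in $W(x,n)$, and this does not follow from $a+[N]\subset\mathrm{int}_1 W(x,n)$ once $k\geq 2$ (a unit cube is not contained in a unit ball). The paper records instead that $a+[N]\subset\mathrm{int}_{\sqrt{k}}W(x,n)$, which your own triangle-inequality computation already gives (replace the buffer $1$ by $\sqrt{k}$: $\sqrt{k}+(N-1)\sqrt{k}=N\sqrt{k}$); then each $p\in a+[N]$ satisfies $p+[0,1)^k\subset B_{\sqrt{k}}(p)\subset W(x,n)$, so $a+[0,N)^k\subset W(x,n)$ and the disjoint-cubes volume bound is immediate.
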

\begin{proof}
The set $\mathbb{Z}^k\cap \mathrm{int}_{N\sqrt{k}} W(x,n)$ is contained in the disjoint union of
$a+[N]$ where $a\in \mathbb{Z}^k$ satisfies $a\equiv n \pmod N$ and
$a+[N]\subset \mathrm{int}_{\sqrt{k}} W(x,n)$.
Take such $a+[N]$. Then $I_g(x)|_{a+[N]}=F(T^ax)$ by Claim \ref{claim: formula of I_g for small entropy factor}.
The second condition of $F$ implies
\[ |\{m\in a+[N]|\, I_g(x)_m=0\}| < \varepsilon \, N^k = \varepsilon \, \vol(a+[0,N)^k).\]
Summing up this estimate, we get the result.
\end{proof}

Continuing with the proof of Proposition~\ref{prop: main proposition for small entropy factors},
let $R>0$ and $x\in X$.
The number of $m\in \mathbb{Z}^k$ satisfying $|m|\leq R$ and $I_g(x)_m=0$ is bounded by
\begin{multline*}
 \left|\mathbb{Z}^k\cap B_R\cap \partial(x,N\sqrt{k})\right| + \\
 \begin{aligned}
 &+ \!\!\sum_{n:W(x,n)\cap B_R\neq \emptyset}\!\!
   \left|\left\{m\in \mathbb{Z}^k\cap \mathrm{int}_{N\sqrt{k}} W(x,n)\middle|\, I_g(x)_m=0\right\}\right| \\
   &\leq |\mathbb{Z}^k\cap B_R\cap \partial(x,N\sqrt{k})| +
 \varepsilon \sum_{n:W(x,n)\cap B_R\neq \emptyset} \vol(W(x,n))\\
 & \hspace{2in}{(\text{by Claim \ref{claim: main claim for small entropy factor}})}\\
   &\leq |\mathbb{Z}^k\cap B_R\cap \partial(x,N\sqrt{k})| + \varepsilon\, \vol(B_{R+2L+2\sqrt{k}})\\
 &\hspace{2in}(\text{by $\diam (W(x,n))< 2L+2\sqrt{k}$}).
\end{aligned}
\end{multline*}
Recall $E=\sqrt{k}(N+1)$. If $m\in \partial(x,N\sqrt{k})$, then $m+[0,1)^k\subset \partial(x,E)$.
Hence
\[\bigl|\mathbb{Z}^k\cap B_R\cap \partial(x,N\sqrt{k})\bigr|\leq \vol(\partial(x,E)\cap B_{R+\sqrt{k}}).\]
Therefore
\begin{multline*}
\frac{|\{m\in \mathbb{Z}^k\cap B_R|\, I_g(x)_m=0\}|}{|\mathbb{Z}^k\cap B_R|}\\
   \leq \frac{\vol(\partial(x,E)\cap B_{R+\sqrt{k}})}{|\mathbb{Z}^k\cap B_R|}
       + \varepsilon \frac{\vol(B_{R+2L+2\sqrt{k}})}{|\mathbb{Z}^k\cap B_R|}.
\end{multline*}	
Using the formula (\ref{eq: ocap and balls}) and the condition
(\ref{eq: bound on the edge}) we get
\begin{align*} \ocap(\{g=0\}) &= \lim_{R\to \infty}
   \frac{\sup_{x\in X}|\{m\in \mathbb{Z}^k\cap B_R|\, I_g(x)_m=0\}|}{|\mathbb{Z}^k\cap B_R|}\\
   &< \varepsilon + \varepsilon = 2\varepsilon <\delta.
   \end{align*}
Here we have used $|\mathbb{Z}^k\cap B_R|\sim \vol(B_R)$ as $R\to \infty$.
\end{proof}

\section{Metric mean dimension: proof of Theorem \ref{thm: metric mean dimension is equal to mean dimension}}
\label{section: metric mean dimension is equal to mean dimension}

We prove Theorem \ref{thm: metric mean dimension is equal to mean dimension} in this section.
Let $(X,d)$ be a compact metric space, and $\alpha$ an open covering of $X$.
We denote by $\mesh(\alpha,d)$ the supremum of $\diam\, U$ over $U\in \alpha$.
For $\varepsilon>0$ we define $A(X,\varepsilon,d)$ as the minimum cardinality of open coverings $\alpha$ of $X$
satisfying $\mesh(\alpha,d)<\varepsilon$.
Let $T: \mathbb{Z}^k\times X\to X$ be a continuous action.
We have a natural sub-additivity:
$\log A(X,\varepsilon,d_{\Omega_1\cup\Omega_2}) \leq \log A(X,\varepsilon,d_{\Omega_1})+\log A(X,\varepsilon,d_{\Omega_2})$.
We define
\begin{equation}  \label{eq: inf entropy at finite scale}
   S(X,\varepsilon,d) = \lim_{N\to \infty} \frac{1}{N^k}\log A(X,\varepsilon, d_{[N]})
   =  \inf_{N\geq 1}\frac{1}{N^k}\log A(X,\varepsilon, d_{[N]}).
\end{equation}
By Lemma \ref{lemma: Ornstein--Weiss} we also have
\begin{equation} \label{eq: S(X) and balls}
  S(X,\varepsilon,d) = \lim_{R\to \infty} \frac{1}{|B_R\cap \mathbb{Z}^k|}\log A(X,\varepsilon,d_{B_R\cap\mathbb{Z}^k}).
\end{equation}
We define the \textbf{metric mean dimension} $\mmdim(X,d)$ by
\[ \mmdim(X,d) = \liminf_{\varepsilon\to 0}\frac{S(X,\varepsilon,d)}{|\log \varepsilon|}.\]
This is a dynamical analogue of box-counting dimension in fractal geometry.
Metric mean dimension is always an upper bound on mean dimension (\cite[Theorem 4.2]{Lindenstrauss--Weiss}):
\begin{theorem} \label{thm: mdim leq mmdim}
\[ \mdim(X)\leq \mmdim(X,d).\]
\end{theorem}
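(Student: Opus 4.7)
The plan is to reduce $\mdim(X)\leq\mmdim(X,d)$ to a purely geometric inequality comparing $\widim_\varepsilon$ with the logarithm of the covering number on a single compact metric space, and then extract the asymptotics through the standard F\o lner-type limits in $N$ and $\varepsilon$.

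\textbf{Key geometric lemma.} I would first establish that, for any compact metric space $(Y,\rho)$ and any $0<\delta<\varepsilon/2$,
\[ \widim_\varepsilon(Y,\rho)\cdot\log\frac{\varepsilon}{2\delta}\leq\log A(Y,\delta,\rho). \]
Starting from an optimal $\delta$-cover $\alpha=\{U_1,\dots,U_M\}$ with $M=A(Y,\delta,\rho)$, the naive nerve bound $\widim_\delta(Y,\rho)\leq M-1$ is far too weak. To obtain the logarithmic bound I would run a dyadic coarsening: label the $U_i$ by binary strings of length $\lceil\log_2 M\rceil$, and at each of $\lceil\log_2(\varepsilon/(2\delta))\rceil$ stages merge pairs of sets sharing a common prefix, producing a nested sequence of open covers at geometrically growing scales up to $\varepsilon$. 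At the final stage the cover has mesh $<\varepsilon$ and order bounded by $\log_2 M/\log_2(\varepsilon/(2\delta))$; the partition of unity subordinate to this final cover then yields an $\varepsilon$-embedding into its nerve, a simplicial complex of that dimension.

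\textbf{Passage to the limit.} Applying the lemma with $\rho=d_{[N]}$ and dividing by $N^k$ gives
\[ \frac{\widim_\varepsilon(X,d_{[N]})}{N^k}\cdot\log\frac{\varepsilon}{2\delta}\leq\frac{\log A(X,\delta,d_{[N]})}{N^k}. \]
Both sides are sub-additive and translation-invariant in the F\o lner coordinate set, so Lemma~\ref{lemma: Ornstein--Weiss}, together with (\ref{eq: inf entropy at finite scale}) and the corresponding fact for $\widim_\varepsilon$ from Section~\ref{section: review of mean dimension}, lets me take $N\to\infty$:
\[ \log\frac{\varepsilon}{2\delta}\cdot\lim_{N\to\infty}\frac{\widim_\varepsilon(X,d_{[N]})}{N^k}\leq S(X,\delta,d). \]
For every integer $n\geq 2$ I now set $\delta=\varepsilon^n/2$, so that $\log(\varepsilon/(2\delta))=(n-1)|\log\varepsilon|$ and $|\log\delta|=n|\log\varepsilon|+\log 2$. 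Dividing by $|\log\delta|$ and letting $\varepsilon\to 0$ gives
\[ \mdim(X)\leq\frac{n}{n-1}\,\liminf_{\varepsilon\to 0}\frac{S(X,\varepsilon^n/2,d)}{|\log(\varepsilon^n/2)|}\leq\frac{n}{n-1}\,\mmdim(X,d). \]
Letting $n\to\infty$ yields the desired inequality $\mdim(X)\leq\mmdim(X,d)$.

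\textbf{Main obstacle.} By far the main obstacle is the geometric lemma itself: reducing the order of an open cover from $O(M)$ to $O(\log M/\log(\varepsilon/\delta))$ while allowing the mesh to blow up from $\delta$ only up to $\varepsilon$ is the non-trivial metric-geometric content of the argument. Once that is in place, the rest is a routine combination of Lemma~\ref{lemma: Ornstein--Weiss} with the definitions of $\mdim$ and $\mmdim$.
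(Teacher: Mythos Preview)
The paper does not prove this theorem; it is quoted from \cite[Theorem 4.2]{Lindenstrauss--Weiss}. So there is no in-paper proof to compare against, only your attempt to supply one.

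Your overall architecture---a pointwise inequality between $\widim_\varepsilon$ and $\log A(\cdot,\delta,\cdot)$ on a single compact metric space, followed by routine limit extraction via Lemma~\ref{lemma: Ornstein--Weiss}---is the right shape, and the limit manipulations in your second and third steps are fine. The problem is the ``key geometric lemma'' itself, or rather your proof of it.

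Your dyadic coarsening argument does not work as written. You assign \emph{arbitrary} binary labels to the sets $U_1,\dots,U_M$ of a minimal $\delta$-cover and then merge pairs sharing a prefix. But nothing ties the labelling to the geometry: two sets with adjacent labels may sit on opposite ends of $Y$, so after a single merge the mesh can already jump to $\diam(Y)$, not $2\delta$. The claim that the stages produce ``covers at geometrically growing scales up to $\varepsilon$'' therefore has no justification. Equally, the asserted order bound $\log_2 M/\log_2(\varepsilon/(2\delta))$ is unexplained: merging $2^j$ sets into one can only \emph{decrease} the order relative to the original cover, it does not drive it toward $\log_2 M/j$; after $j$ merges you have $M/2^j$ sets, and the only order bound this yields is the trivial $\text{ord}\leq M/2^j$, which is the wrong quantity. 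So both the mesh claim and the order claim in your final-stage cover are unsupported.

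What you actually need is either a geometry-aware hierarchical construction (e.g.\ build a net tree on the centers, so that merged sets at level $j$ genuinely have diameter $\lesssim 2^j\delta$, and then use the tree structure to produce a map into a complex whose dimension is controlled by the depth), or---closer to the original argument in \cite{Lindenstrauss--Weiss}---work with the open-cover definition of mean dimension via $\mathcal{D}(\alpha)=\min_{\beta\succ\alpha}(\text{ord}(\beta)-1)$ and exploit that any cover of $d_{[N]}$-mesh less than the Lebesgue number of $\alpha$ automatically refines $\alpha^{[N]}$. Either route avoids the unjustified coarsening step.
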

The main question here is whether the equality holds for \textit{generic} distances $d$ or not.
We apply to this question a machinery developed in Section \ref{section: small entropy factors}.
Let $V$ be a Banach space (possibly infinite dimensional)
with a norm $\norm{\cdot}_V$, and $K\subset V$ a compact convex subset.
Let $(K^{\mathbb{Z}^k},\mathbb{Z}^k,\sigma)$ be the $\mathbb{Z}^k$-shift on $K^{\mathbb{Z}^k}$ where
\[ \sigma^n(x)=y, \quad y_m = x_{m+n}.\]
We define a distance $D$ on $K^{\mathbb{Z}^k}$ by
\[ D(x,y) = \sum_{n\in \mathbb{Z}^k}2^{-|n|}\norm{x_n-y_n}_V.\]
We study maps from $X$ to $K^{\mathbb{Z}^k}$.
Let $C(X,K)$ be the space of continuous maps from $X$ to $K$ equipped with the distance
$\norm{f-g}=\sup_{x\in X}\norm{f(x)-g(x)}_V$.
For a continuous map $f:X\to K$ we define $I_f:X\to K^{\mathbb{Z}^k}$ by
$I_f(x) = (f(T^nx))_{n\in \mathbb{Z}^k}$.
The image $I_f(X)$ is a subsystem of $K^{\mathbb{Z}^k}$.
The next theorem is the main result of this section.

\begin{theorem} \label{thm: mmdim leq mdim}
Suppose $X$ has the marker property.
Then for a dense $G_\delta$ subset of $f\in C(X,K)$
\[ \mmdim(I_f(X),D)\leq \mdim(X).\]
\end{theorem}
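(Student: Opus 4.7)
The proof has two stages, paralleling the proof of Theorem~\ref{thm: zero mean dimension and topological entropy}. In the first (soft) stage I would verify that, for each fixed $\varepsilon > 0$ and $N \geq 1$, the map $f \mapsto A(I_f(X), \varepsilon, D_{[N]})$ is upper semicontinuous on $C(X, K)$: setting $d_f(x, y) := D(I_f(x), I_f(y))$, one has $A(I_f(X), \varepsilon, D_{[N]})$ equal to the minimal cardinality of an open cover of $X$ of $(d_f)_{[N]}$-pseudomesh $<\varepsilon$, and since $(d_{f'})_{[N]} \to (d_f)_{[N]}$ uniformly as $f' \to f$, any cover realizing this minimum remains valid for $f'$ close to $f$. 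Taking infima over $N$ preserves USC, so the target set is the $G_\delta$
\[
\bigcap_{n,m}\;\bigcup_{\varepsilon \in (0,1/m)\cap \mathbb{Q}}
\bigl\{f : S(I_f(X), \varepsilon, D) < (\mdim(X) + 1/n)|\log \varepsilon|\bigr\}.
\]
By Baire, it suffices to prove a density assertion: given $f_0 \in C(X, K)$, $\delta > 0$, $\eta > 0$, produce $f \in C(X, K)$ with $\|f - f_0\| < \delta$ and some $\varepsilon_* \in (0, \delta)$ satisfying $S(I_f(X), \varepsilon_*, D) < (\mdim(X) + \eta)|\log\varepsilon_*|$.

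\textbf{Painting step.} For the density assertion I would copy the construction of Proposition~\ref{prop: main proposition for small entropy factors} essentially verbatim, with a composition $g_0 \circ p_0$ playing the role of $F$ there. Pick $\varepsilon_1 \in (0,\delta)$ so that $d_X$-close points are $f_0$-close to within $\delta/3$; choose $N_0$ with $\widim_{\varepsilon_1}(X, d_{[N_0]})/N_0^k < \mdim(X) + \eta/4$; take an $\varepsilon_1$-embedding $p_0 : X \to P_0$ with $d_0 := \dim P_0 < (\mdim(X) + \eta/4)N_0^k$; and by Lemma~\ref{lemma: approximation by linear map} find a linear $g_0 : P_0 \to K^{[N_0]}$ with $\|g_0\circ p_0 - I_{f_0}|_{[N_0]}\|_{\infty} < \delta/3$. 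Then apply the Voronoi construction of Section~\ref{section: adding one dimension} with $M$ large, producing the tiling $\mathbb{R}^k = \bigcup_n W(x, n)$ of small aggregate boundary (Lemma~\ref{lemma: most tiles are large}). Define $f$ by painting $g_0(p_0(T^a x))_{-a}$ through the $[N_0]$-aligned sublattices inside each tile interior, smoothly blending with $f_0$ near tile boundaries. Then $\|f - f_0\| < \delta$, and the analogue of Claim~\ref{claim: formula of I_g for small entropy factor} asserts $I_f(x)|_{a+[N_0]} = g_0(p_0(T^a x))$ whenever $a + [N_0] \subset \mathrm{int}_1 W(x,n)$.

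\textbf{Covering bound.} To estimate $\log A(I_f(X), \varepsilon_2, D_{[N']})$ for small $\varepsilon_2$ and large $N'$, I would specify a point of $I_f(X)$ over a slightly fattened $\Lambda \supset [N']$ (to absorb the exponential tails of $D$), up to $\varepsilon_2$-error, by three pieces of data: (a) the combinatorial type of the Voronoi tiling of $\Lambda$, with tile vertex positions quantized at scale $\varepsilon_2$, contributing a factor at most $(C/\varepsilon_2)^{O(N'^k/L^k)}$; (b) for each $[N_0]$-aligned block inside a tile interior, the element $p_0(T^a x) \in P_0$, which as a linear image of a $d_0$-complex in a Banach space has $\varepsilon_2$-covering number at most $(C/\varepsilon_2)^{d_0}$, giving a total of at most $(C/\varepsilon_2)^{(\mdim(X) + \eta/4) N'^k(1+o(1))}$; and (c) the $f$-values at the $\leq \eta' N'^k$ lattice sites in the Voronoi boundary, each contributing a factor of $A_K(\varepsilon_2)$ (the metric $\varepsilon_2$-covering number of $K$), giving at most $A_K(\varepsilon_2)^{\eta' N'^k}$. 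Taking log, dividing by $N'^k|\log\varepsilon_2|$, and sending $N' \to \infty$ gives
\[
\frac{S(I_f(X), \varepsilon_2, D)}{|\log\varepsilon_2|}
\;\leq\;
\mdim(X) + \frac{\eta}{4} + O(L^{-k})
+ \eta'\cdot\frac{\log A_K(\varepsilon_2)}{|\log\varepsilon_2|}.
\]
Pick any $\varepsilon_2 < \delta$; then pick $L$ (hence $M$) large enough to make the $L^{-k}$ term $<\eta/8$; and then, for this specific $\varepsilon_2$, enlarge $M$ further so that $\eta' \log A_K(\varepsilon_2) < (\eta/8)|\log\varepsilon_2|$, completing the density argument.

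\textbf{Main obstacle.} The critical point is (c): $K$ can (and in applications must) be infinite-dimensional, in which case $\log A_K(\varepsilon_2)$ may grow much faster than $|\log\varepsilon_2|$ as $\varepsilon_2 \to 0$. This forces the Voronoi parameter $M$ to be chosen \emph{after} $\varepsilon_2$, so that the boundary fraction $\eta' \to 0$ compensates; the painted $f$ is therefore tailored to a specific $\varepsilon_2$, and the density statement is established one $\varepsilon$ at a time rather than by producing a single universal low-dimensional factorization of $I_f$. A secondary technical point is that item (a) genuinely counts $\varepsilon_2$-equivalence classes of patterns only if the painted $f$ depends in a controlled Lipschitz way on continuous deformations of tile vertex positions, which requires choosing the blending cutoff $\alpha$ to be smooth with a uniformly bounded Lipschitz modulus rather than merely continuous as in Proposition~\ref{prop: main proposition for small entropy factors}.
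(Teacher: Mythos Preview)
Your overall strategy matches the paper's: reduce to a density statement via upper semicontinuity of $f\mapsto S(I_f(X),\varepsilon,D)$, construct the perturbation $f$ by painting a linear image $g_0(P_0)\subset K^{[N_0]}$ inside the Voronoi tiles of Section~\ref{section: adding one dimension}, and bound $\log A(I_f(X)|_{B_R},\varepsilon_2,\norm{\cdot}_\infty)$ by a product of three factors. Your items (b) and (c), and your observation that $M$ must be chosen \emph{after} $\varepsilon_2$ because $K$ may be infinite-dimensional, are exactly what the paper does (see Lemma~\ref{lemma: spaning set of polyhedron} and condition~(\ref{eq: edge effect for metric mean dimension})).

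The one substantive difference is your item (a). You propose to record the Voronoi pattern by quantizing tile vertex positions at scale $\varepsilon_2$, which forces you to check that the painted $f$ depends Lipschitz-continuously on the tile geometry---the complication you flag at the end. The paper sidesteps this entirely: it never encodes the tiling. Instead it records only the \emph{set} $C\subset B_R\cap\mathbb{Z}^k$ of block positions $a$ for which $I_f(x)|_{a+[N_0]}$ lands in the fixed set $F(X)=g_0(P_0)$; this is purely combinatorial, with at most $2^{|B_R\cap\mathbb{Z}^k|}$ choices, and the resulting factor $\log 2$ per lattice point is absorbed by requiring $\log 2<\tau|\log\varepsilon_2|$ (i.e.\ taking $\varepsilon_2$ small). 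With this replacement your Lipschitz issue disappears and no regularity on $\alpha$ beyond continuity is needed. Everything else in your outline goes through as in the paper.
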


Assuming this theorem, we can easily prove Theorem \ref{thm: metric mean dimension is equal to mean dimension}.
\begin{proof}[Proof of Theorem \ref{thm: metric mean dimension is equal to mean dimension}]
Let $V=\ell^2(\mathbb{N})$ be the standard $\ell^2$-space with
$K=\{(u_n)_{n=1}^\infty\in \ell^2|\, 0\leq u_n\leq 1/n\}$.
Now $K$ is a compact convex subset homeomorphic to $[0,1]^{\mathbb{N}}$.
Generic continuous maps from compact metric spaces to $[0,1]^{\mathbb{N}}$ are topological embeddings.
This is well-known in classical dimension theory (Hurewicz--Wallman \cite[Theorem V4]{Hurewicz--Wallman});
it can be also deduced from Lemmas \ref{lemma: approximation by linear map} and
\ref{lemma: embedding of simplicial complex}.
So the map $I_f:X\to K^{\mathbb{Z}^k}$ becomes an embedding for generic $f\in C(X,K)$
because $f$ itself is an embedding.
Then by Theorem \ref{thm: mmdim leq mdim} we can find a continuous map $f:X\to K$ such that
$I_f$ is an embedding and $\mmdim(I_f(X),D)\leq \mdim(X)$.
Let $d$ be the pull-back of the distance $D$ by $I_f$.
We get $\mmdim(X,d)\leq \mdim(X)$.
The inequality $\mmdim(X,d)\geq \mdim(X)$ is always true.
Thus $\mmdim(X,d)=\mdim(X)$.
\end{proof}

Let $(X,\mathbb{Z}^k,T)$ be a dynamical system.
\begin{lemma}
For $\delta>0$ and $c>0$ the set of $f\in C(X,K)$ satisfying
\[ \exists 0<\varepsilon<\delta: \frac{S(I_f(X),\varepsilon,D)}{|\log \varepsilon|} < c\]
is open.
\end{lemma}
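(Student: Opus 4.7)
The plan is to show that the condition is stable under small $C^0$-perturbations of $f$. Suppose $f \in C(X,K)$ lies in the given set, so there exists $\varepsilon \in (0,\delta)$ with $S(I_f(X),\varepsilon,D) < c|\log\varepsilon|$. Using the infimum formula \eqref{eq: inf entropy at finite scale}, I would first pick a concrete integer $N \geq 1$ for which
\[ \frac{1}{N^k}\log A(I_f(X),\varepsilon,D_{[N]}) < c|\log\varepsilon|. \]

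Next I would establish the continuity estimate for the assignment $f \mapsto I_f$. A direct computation from the definition of $D$ (using only a reindexing of the sum to absorb the supremum over $n \in [N]$) gives, for every $x \in X$,
\[ D_{[N]}\bigl(I_f(x),I_g(x)\bigr) \;\leq\; C\,\|f-g\|, \qquad C := \sum_{m \in \Z^k} 2^{-|m|} < \infty. \]
In particular, whenever $\|f-g\| < \eta/C$, every point of $I_g(X)$ is within $D_{[N]}$-distance $\eta$ of a point of $I_f(X)$.

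I would then take an optimal open cover $\alpha$ of $I_f(X)$ with $\mesh(\alpha,D_{[N]}) < \varepsilon$ and $|\alpha| = A(I_f(X),\varepsilon,D_{[N]})$, and replace each $U \in \alpha$ by its open $\eta$-neighborhood $U_\eta \subset K^{\Z^k}$ in the $D_{[N]}$-metric. Each $U_\eta$ has $D_{[N]}$-diameter at most $\varepsilon + 2\eta$, and by the preceding paragraph the collection $\{U_\eta : U \in \alpha\}$ covers $I_g(X)$. Therefore
\[ A\bigl(I_g(X),\, \varepsilon+2\eta,\, D_{[N]}\bigr) \;\leq\; A\bigl(I_f(X),\varepsilon,D_{[N]}\bigr). \]

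Finally I would choose $\eta > 0$ small enough that $\varepsilon + 2\eta < \delta$ and, by continuity of $|\log(\cdot)|$ at $\varepsilon$, that the strict inequality
\[ \tfrac{1}{N^k}\log A(I_f(X),\varepsilon,D_{[N]}) \;<\; c\,|\log(\varepsilon+2\eta)| \]
still holds. For every $g$ with $\|f-g\| < \eta/C$, applying \eqref{eq: inf entropy at finite scale} to $I_g(X)$ at the scale $\varepsilon + 2\eta$ then yields $S(I_g(X),\varepsilon+2\eta,D)/|\log(\varepsilon+2\eta)| < c$, so $g$ lies in the set (witnessed by $\varepsilon' := \varepsilon + 2\eta \in (0,\delta)$). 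There is no serious obstacle here; the only point demanding any care is the simultaneous coupling of $\eta$ to the three requirements $\varepsilon + 2\eta < \delta$, the log-continuity threshold, and the neighborhood radius $\eta/C$ in $C(X,K)$.
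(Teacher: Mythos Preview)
Your proof is correct and follows essentially the same approach as the paper: pick $N$ witnessing the strict inequality via \eqref{eq: inf entropy at finite scale}, enlarge an optimal $D_{[N]}$-cover of $I_f(X)$ to open sets in the ambient space $K^{\mathbb{Z}^k}$, and use the uniform estimate $D_{[N]}(I_f(x),I_g(x))\leq C\|f-g\|$ to conclude that the enlarged cover still works for $I_g(X)$. The only cosmetic difference is that the paper enlarges each $U$ to an ambient open set $\tilde U$ still of $D_{[N]}$-diameter $<\varepsilon$ (possible since $\diam U<\varepsilon$ strictly) and keeps the same witness $\varepsilon$ for $g$, whereas you take $\eta$-thickenings and shift the witness to $\varepsilon+2\eta$; both variants are fine.
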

\begin{proof}
Suppose $f:X\to K$ and $0<\varepsilon<\delta$ satisfy
$S(I_f(X),\varepsilon, D)<c|\log \varepsilon|$.
Then there exist $N>0$ and an open covering $\alpha$ of $I_f(X)$ such that
$\mesh(\alpha,D_{[N]})<\varepsilon$ and $|\alpha| < \exp(cN^k|\log \varepsilon|)$.
For each $U\in \alpha$ we can find an open set $\tilde{U}\supset U$ of
$K^{\mathbb{Z}^k}$ with $\diam (\tilde{U},D_{[N]})< \varepsilon$.
Set $\tilde{\alpha} = \{\tilde{U}\}_{U\in \alpha}$.
If $g:X\to K$ is sufficiently close to $f$ then
$I_g(X)\subset \bigcup \tilde{\alpha}$.
Hence
\[ A(I_g(X),\varepsilon, D_{[N]})\leq |\tilde{\alpha}| < \exp(cN^k|\log \varepsilon|).\]
By (\ref{eq: inf entropy at finite scale}), we get $S(I_g(X),\varepsilon,D) < c|\log \varepsilon|$.
\end{proof}

By this lemma the set
\begin{equation*}
  \begin{split}
  &\{f\in C(X,K)|\, \mmdim(I_f(X),D) \leq \mdim(X)\} \\
   &= \bigcap_{n\geq 1}\left\{f\in C(X,K)\middle| \exists 0<\varepsilon<\frac{1}{n}:
     \frac{S(I_f(X),\varepsilon,D)}{|\log \varepsilon|} < \mdim(X)+\frac{1}{n}\right\}
  \end{split}
\end{equation*}
is a $G_\delta$ subset of $C(X,K)$.
Therefore Theorem \ref{thm: mmdim leq mdim} follows from the next proposition.

\begin{proposition} \label{prop: main proposition for metric mean dimension}
Suppose $X$ has the marker property.
Let $\delta$ be a positive number, and $f:X\to K$ a continuous map.
There exist $0<\varepsilon<\delta$ and a continuous map $g:X\to K$ satisfying $\norm{f-g}<\delta$ and
\[ S(I_g(X),\varepsilon,D) < (\mdim(X)+\delta)|\log \varepsilon|.\]
\end{proposition}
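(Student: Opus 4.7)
I would follow the same blueprint as the proof of Proposition~\ref{prop: main proposition for small entropy factors}, replacing the orbit-capacity conclusion by a covering-number estimate for $I_g(X)$ in the Bowen metric $D_{[M']}$. Fix a small parameter $\eta > 0$ (to be tuned later in terms of $\delta$ and the covering-function growth of $K$). Using $\mdim(X) = \lim_{\varepsilon'\to 0}\lim_{N\to\infty}\widim_{\varepsilon'}(X,d_{[N]})/N^k$ together with the uniform continuity of $f$, pick $\varepsilon_1 > 0$ and a positive integer $N$ so that $\widim_{\varepsilon_1}(X,d_{[N]}) \leq D_0 := \lfloor(\mdim(X)+\eta)N^k\rfloor$ and $d(x,y) < \varepsilon_1 \Rightarrow \|f(x)-f(y)\|_V < \delta/3$. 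Choose an $\varepsilon_1$-embedding $\pi: X \to P$ with $\dim P = D_0$ and invoke Lemma~\ref{lemma: approximation by linear map} to obtain a linear map $\tilde g: P \to K^{[N]}$ such that $F := \tilde g \circ \pi$ satisfies $\|F(x) - I_f(x)|_{[N]}\|_\infty < \delta/3$.

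\textbf{Painting via Voronoi.} Next apply the Voronoi-plus-one-dimension machinery of Section~\ref{section: adding one dimension} with marker parameter $M$ chosen so large that, by Lemma~\ref{lemma: most tiles are large}, the $N\sqrt{k}$-boundary layer has uniform density $\limsup_R \sup_x \vol(\partial(x,N\sqrt{k})\cap B_R)/\vol(B_R) < \eta$. Define $g: X \to K$ verbatim as in the proof of Proposition~\ref{prop: main proposition for small entropy factors}: on interior lattice points $j \in \mathrm{int}_1 W(x,n)$, set $g(T^j x) := F(T^a x)_{j-a}$ for the block $a+[N]\ni j$ with $a\equiv n\pmod N$; near tile boundaries, interpolate with $f$ via the cut-off $\alpha$. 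Continuity of the Voronoi tiles in $x$ gives continuity of $g$; convexity of the interpolation together with $\|F - I_f|_{[N]}\|_\infty < \delta/3$ gives $\|f - g\| < \delta$; and the same computation as Claim~\ref{claim: formula of I_g for small entropy factor} yields $I_g(x)|_{a+[N]} = F(T^a x)$ whenever $a+[N] \subset \mathrm{int}_1 W(x,n)$.

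\textbf{Counting.} For large $M'$ I would bound $\log A(I_g(X),\varepsilon,D_{[M']})$ by counting the free parameters that determine $I_g(x)|_{[M']}$ up to $\varepsilon$-precision in $D_{[M']}$; the tail $\sum 2^{-|n|}$ in $D$ only enlarges the relevant window by $O(|\log\varepsilon|)$, a lower-order correction. These parameters are: (i) the combinatorial Voronoi pattern in the window --- the number of such patterns is $\exp(O(M'^k \log L / L^k))$, which contributes $o_L(1)$ to the $S$-rate once $L$ is large; (ii) for each of the $\leq M'^k/N^k$ interior blocks $a+[N]\subset \mathrm{int}_1 W(x,n)$ hitting the window, one point of $\tilde g(P) \subset V^{[N]}$, which admits a $(C/\varepsilon)^{D_0}$-cover because $\tilde g(P)$ is a finite union of $D_0$-dimensional simplicial cells; and (iii) one value in $K$ at each of the $\leq \eta M'^k$ boundary-layer lattice points. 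Adding logs, dividing by $M'^k$, and letting $M' \to \infty$,
\[
 S(I_g(X),\varepsilon,D) \;\leq\; (\mdim(X)+\eta)\,|\log\varepsilon| \;+\; O_\eta(1) \;+\; \eta \log A(K,\varepsilon).
\]

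\textbf{Balancing, and the main obstacle.} Since $\varepsilon$ is existentially quantified, I would first fix $\eta$ small in terms of $\delta$ and the growth of $A(K,\cdot)$, and then select $\varepsilon \in (0,\delta)$ in the (non-empty) window where all three right-hand-side contributions lie inside $\delta|\log\varepsilon|$. The principal obstacle is precisely this balancing in the counting step: when $V$ is infinite dimensional so that $\log A(K,\varepsilon)/|\log\varepsilon|$ is unbounded as $\varepsilon \to 0$ (as happens for the Hilbert-cube--type $K$ used to deduce Theorem~\ref{thm: metric mean dimension is equal to mean dimension}), one cannot shrink $\varepsilon$ freely --- the parameter $\eta$ must be tuned in advance so that $\eta \log A(K,\varepsilon)$ is dominated by $\delta|\log\varepsilon|$ at some specific witness scale. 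The secondary bookkeeping issue --- bounding the number of combinatorial Voronoi configurations --- is handled by taking $L$ (hence $M$) large, since each configuration is locally determined by $O(1)$ continuous parameters per $L^k$-cell.
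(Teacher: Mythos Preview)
Your overall architecture is exactly the paper's: construct $F:X\to K^{[N]}$ via an $\varepsilon_1$-embedding into a simplicial complex and Lemma~\ref{lemma: approximation by linear map}, paint $F$ into the Voronoi tiles to define $g$, and then bound $A(I_g(X)|_{B_R},\varepsilon,\norm{\cdot}_\infty)$ by the product of three contributions (good $N$-blocks, boundary layer, pattern count). The two places where your write-up diverges from the paper are precisely the two you flag as obstacles, and in both cases the paper's fix is simpler than what you suggest.

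\textbf{Order of choices.} You fix $\eta$, build the Voronoi tiling with boundary density $<\eta$, and only then look for $\varepsilon$; this produces the term $\eta\log A(K,\varepsilon)$, which you correctly identify as the crux when $K$ is infinite dimensional. But you use the \emph{same} $\eta$ for the mean-dimension approximation $\widim_{\varepsilon_1}(X,d_{[N]})\le(\mdim(X)+\eta)N^k$ and for the boundary density; this couples the constant $C_\eta$ in your $(C/\varepsilon)^{D_0}$ covering (which forces $\varepsilon$ small) to the bound $\eta\log A(K,\varepsilon)$ (which for small $\varepsilon$ may be large), and your proposed resolution ``tune $\eta$ in advance in terms of the growth of $A(K,\cdot)$'' does not break this circularity. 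The paper simply reverses the order: fix $\tau<\delta/3$, then $N$ and $F$, then choose $\varepsilon$ using Lemma~\ref{lemma: spaning set of polyhedron} so that $A(F(X),\varepsilon,\norm{\cdot}_\infty)<\varepsilon^{-N^k(\mdim(X)+\tau)}$ (and $\log 2<\tau|\log\varepsilon|$), and only \emph{afterwards} choose the Voronoi parameter $M$ so that the boundary density is below $\tau/\log A(K,\varepsilon,\norm{\cdot}_V)$. With this order the boundary-layer term is automatically $\le\tau\,\vol(B_R)$ and no balancing is needed.

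\textbf{Pattern count.} Your estimate $\exp(O(M'^k\log L/L^k))$ for ``combinatorial Voronoi patterns'' is not justified as stated: the tiles $W(x,n)$ depend on the continuous heights $1/\phi(T^nx)$, so there is no finite set of patterns to count. The paper sidesteps this entirely: it does not record the tiling, only the set $C\subset B_R\cap\mathbb{Z}^k$ of starting points of $N$-blocks on which $I_g(x)|_{a+[N]}\in F(X)$. This is a genuinely finite object, crudely bounded by $2^{|B_R\cap\mathbb{Z}^k|}$, and the resulting $\log 2$ per site is absorbed by the prearranged condition $\log 2<\tau|\log\varepsilon|$.
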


We need some preparations.
Let $Y\subset K^{\mathbb{Z}^k}$ be a closed invariant subset.
For $R>0$ we denote by $Y|_{B_R}$ the image of $Y$ under the natural projection
$K^{\mathbb{Z}^k}\to K^{B_R\cap \mathbb{Z}^k}$.
Let $\norm{\cdot}_\infty$ be the $\ell^\infty$-distance on $K^{B_R\cap \mathbb{Z}^k}$ defined by
$\norm{x-y}_\infty = \max_{n\in B_R\cap \mathbb{Z}^k}\norm{x_n-y_n}_V$.
\begin{lemma} \label{lemma for metric mean dimension}
There exists a universal constant $\kappa >0$ such that for every $\varepsilon>0$
\[ S(Y,\varepsilon,D)\leq \limsup_{R\to \infty}\frac{1}{\vol(B_R)}\log A(Y|_{B_R},\kappa \varepsilon,\norm{\cdot}_\infty).\]
\end{lemma}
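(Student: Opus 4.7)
The plan is to relate a $\|\cdot\|_\infty$-cover of $Y|_{B_R}$ to a $D_\Omega$-cover of $Y$ for $\Omega$ slightly smaller than $B_R\cap\Z^k$, the discrepancy coming entirely from the tail of the weights $2^{-|n|}$. Set the universal constant
\[ C_k = \sum_{n\in\Z^k} 2^{-|n|} < \infty, \qquad \kappa = \frac{1}{2C_k}, \]
which depends only on $k$ (fixed throughout the paper), and define the tail
\[ \delta(T) = \sum_{n\in\Z^k,\, |n|>T} 2^{-|n|}, \qquad \delta(T)\to 0 \text{ as } T\to\infty. \]
For a given $\varepsilon>0$, choose $T=T(\varepsilon)$ large enough that $\diam(K)\cdot \delta(T) < \varepsilon/2$.

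The key estimate is as follows. Let $\mathcal{U}$ be an open covering of $Y|_{B_R}$ of minimal cardinality with $\mesh(\mathcal{U},\|\cdot\|_\infty) < \kappa\varepsilon$. Pull it back via the projection $\pi_R\colon Y\to Y|_{B_R}$ to an open covering $\{\pi_R^{-1}(U)\}_{U\in\mathcal{U}}$ of $Y$. For $y,y'\in \pi_R^{-1}(U)$ and any $m\in B_{R-T}\cap \Z^k$, split the sum defining $D(\sigma^m y,\sigma^m y')$ into $|n|\leq T$ (where $m+n\in B_R\cap\Z^k$ and therefore $\|y_{m+n}-y'_{m+n}\|_V<\kappa\varepsilon$) and $|n|>T$ (where we only use the trivial bound $\diam K$):
\[ D(\sigma^m y,\sigma^m y') \;<\; \kappa\varepsilon\cdot C_k \;+\; \diam(K)\cdot\delta(T) \;<\; \tfrac{\varepsilon}{2}+\tfrac{\varepsilon}{2}=\varepsilon. \]
Taking the supremum over $m\in B_{R-T}\cap\Z^k$, the diameter of $\pi_R^{-1}(U)$ with respect to $D_{B_{R-T}\cap\Z^k}$ is $<\varepsilon$, hence
\[ A\bigl(Y,\varepsilon,D_{B_{R-T}\cap\Z^k}\bigr) \;\leq\; A\bigl(Y|_{B_R},\kappa\varepsilon,\|\cdot\|_\infty\bigr). \]

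To finish, divide by $|B_{R-T}\cap\Z^k|$ and rewrite the right-hand side as
\[ \frac{\vol(B_R)}{|B_{R-T}\cap\Z^k|}\cdot \frac{1}{\vol(B_R)}\log A\bigl(Y|_{B_R},\kappa\varepsilon,\|\cdot\|_\infty\bigr). \]
Since $T$ is fixed once $\varepsilon$ is fixed, the ratio $\vol(B_R)/|B_{R-T}\cap\Z^k|$ tends to $1$ as $R\to\infty$. On the left-hand side, $\{B_{R-T}\cap\Z^k\}_{R}$ is a F\o{}lner sequence in $\Z^k$, so by \eqref{eq: S(X) and balls}
\[ \lim_{R\to\infty}\frac{1}{|B_{R-T}\cap\Z^k|}\log A\bigl(Y,\varepsilon,D_{B_{R-T}\cap\Z^k}\bigr) = S(Y,\varepsilon,D). \]
Taking $\limsup_{R\to\infty}$ of the preceding inequality yields the desired bound. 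The argument is essentially just the truncation of an infinite weighted sum; the only point requiring care is that the truncation level $T$ depends on $\varepsilon$ (and on $\diam K$), but not on $R$, so the ratio of ball cardinalities causes no loss in the asymptotics.
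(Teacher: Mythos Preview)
Your proof is correct and follows essentially the same approach as the paper: set $\kappa=1/(2\sum_{n\in\Z^k}2^{-|n|})$, choose a truncation level depending on $\varepsilon$ so that the tail of the weights contributes at most $\varepsilon/2$, and compare a $\|\cdot\|_\infty$-cover on a ball with a $D_\Omega$-cover on a concentric ball whose radius differs by the truncation level. The only cosmetic difference is that the paper bounds $A(Y,\varepsilon,D_{B_R\cap\Z^k})$ by $A(Y|_{B_{R+L}},\kappa\varepsilon,\|\cdot\|_\infty)$ (enlarging the outer ball), whereas you bound $A(Y,\varepsilon,D_{B_{R-T}\cap\Z^k})$ by $A(Y|_{B_R},\kappa\varepsilon,\|\cdot\|_\infty)$ (shrinking the inner ball); these are equivalent up to the change of variable $R\mapsto R+T$.
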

\begin{proof}
Set $c=\sum_{n\in \mathbb{Z}^k}2^{-|n|}$, and take $L=L(\varepsilon)>0$ satisfying
\[ \sum_{n\in \mathbb{Z}^k, |n|>L}2^{-|n|}\diam K < \varepsilon/2.\]
Let $\pi:K^{\mathbb{Z}^k} \to K^{B_{R+L}\cap \mathbb{Z}^k}$ be the projection.
For any $n\in B_R\cap \mathbb{Z}^k$ and $x,y\in K^{\mathbb{Z}^k}$
\begin{equation*}
  \begin{split}
   D(\sigma^n(x),\sigma^n(y)) &= \sum_{m\in \mathbb{Z}^k}2^{-|m|}\norm{x_{n+m}-y_{n+m}}_V \\
   &\leq \sum_{|m|\leq L}2^{-|m|}\norm{x_{n+m}-y_{n+m}}_V + \sum_{|m|>L}2^{-|m|}\diam\, K \\
   &< c\norm{\pi(x)-\pi(y)}_\infty + \frac{\varepsilon}{2}.
  \end{split}
\end{equation*}
So $D_{B_R\cap \mathbb{Z}^k}(x,y)< c\norm{\pi(x)-\pi(y)}_\infty +\varepsilon/2$.
We set $\kappa = 1/(2c)$. Then for every $\varepsilon>0$
\[ A(Y,\varepsilon,D_{B_R\cap \mathbb{Z}^k}) \leq A(Y|_{B_{R+L}},\kappa \varepsilon,\norm{\cdot}_\infty).\]
Using (\ref{eq: S(X) and balls}) and $|B_R\cap \mathbb{Z}^k|\sim \vol(B_R)$ as $R\to \infty$,
\[ S(Y,\varepsilon,D) = \lim_{R\to \infty}\frac{1}{|B_R\cap \mathbb{Z}^k|}\log A(Y,\varepsilon,D_{B_R\cap \mathbb{Z}^k})
   \leq \limsup_{R\to \infty}\frac{1}{\vol(B_R)}\log A(Y|_{B_R},\kappa \varepsilon,\norm{\cdot}_\infty).\]
\end{proof}

\begin{lemma}\label{lemma: spaning set of polyhedron}
Let $W$ be a Banach space,
$P$ a simplicial complex, and
$f:P\to W$ a linear map.
Then for any $0<\varepsilon\leq 1$
\[ A(f(P),\varepsilon,\norm{\cdot}) \leq \mathrm{const}\cdot \varepsilon^{-\dim P},\]
where $\mathrm{const}$ is a positive constant depending only on $\diam f(P)$, $\dim P$ and the number of the simplices of $P$.
\end{lemma}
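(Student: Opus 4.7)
The plan is to reduce the problem to a standard volume/packing estimate for finite-dimensional normed spaces, applied simplex by simplex. Since $f$ is linear, each simplex $\Delta$ of $P$ (of dimension $d \leq \dim P$) is mapped to the convex hull of the images of its vertices, and this image $f(\Delta)$ lies in a $d$-dimensional affine subspace of $W$. The set $f(P)$ is the union of finitely many such pieces, so it is enough to bound the covering number of each $f(\Delta)$ individually and sum over the finitely many simplices of $P$.

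The key ingredient is the standard fact that in any normed space of dimension $d$, a subset of diameter at most $D$ can be covered by at most $(3D/\varepsilon)^d$ balls of radius $\varepsilon$ whenever $\varepsilon \leq D$ (the constant $3$ is not important; any absolute constant will do). This follows from the usual volume-ratio argument applied to a maximal $\varepsilon$-separated set inside such a subset, and crucially does not depend on the ambient Banach space $W$, only on the dimension $d$ of the affine subspace in which $f(\Delta)$ sits. Since $\diam f(\Delta) \leq \diam f(P)$ and $d \leq \dim P$, one obtains
\[
A\bigl(f(\Delta),\varepsilon,\norm{\cdot}\bigr) \leq (3\diam f(P))^{\dim P}\varepsilon^{-\dim P}
\]
for every simplex $\Delta$ (using $\varepsilon \leq 1 \leq \diam f(P)$ in the relevant regime; if $\diam f(P) < \varepsilon$ the bound is trivial).

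Summing over all simplices of $P$ yields $A(f(P),\varepsilon,\norm{\cdot}) \leq \bigl(\text{number of simplices}\bigr)\cdot(3\diam f(P))^{\dim P} \varepsilon^{-\dim P}$, which is the desired inequality with an explicit constant depending only on the three quantities listed. There is essentially no obstacle: the only mildly delicate point is checking the $\varepsilon>\diam f(P)$ case, handled trivially since a single ball then covers $f(P)$ and the constant already exceeds $1$; also, one should note that no hypothesis on the norm of $W$ (e.g.\ Euclidean versus arbitrary) is needed, because the packing estimate is uniform over all norms in a given finite dimension.
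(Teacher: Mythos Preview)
Your proposal is correct and follows essentially the same approach as the paper: reduce to a single simplex, observe that its image lies in a finite-dimensional affine subspace of $W$, and apply a covering estimate for bounded sets in $d$-dimensional normed spaces. The only difference is cosmetic: the paper writes down an explicit grid of $(1+\lfloor 2n/\varepsilon\rfloor)^n$ points in the simplex $\{x_i\geq 0,\ \sum x_i\leq 1\}$ and checks directly that the $\varepsilon/2$-balls around their images cover $f(\Delta)$, whereas you invoke the standard volume/packing argument; both yield the same $\mathrm{const}\cdot\varepsilon^{-\dim P}$ bound.
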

\begin{proof}
We can assume without loss of generality that $\diam f(P)\leq 1$ and $P$ is an $n$-dimensional simplex.
Then we can find $a,u_1,\dots,u_n\in W$ with $\norm{u_i}\leq 1$ such that
\[ f(P) = \{a+x_1 u_1+\dots+ x_n u_n|\, x_1,\dots,x_n\geq 0, \, x_1+\dots+x_n\leq 1\}.\]
Consider the following $(1+\lfloor 2n/\varepsilon\rfloor)^n$ points in $W$:
\[ a+x_1 u_1+\dots+x_n u_n\, \quad
\left(x_i=0,\frac{\varepsilon}{2n}, \frac{2\varepsilon}{2n},\dots, \left\lfloor \frac{2n}{\varepsilon}\right\rfloor \frac{\varepsilon}{2n}\right) .\]
Now $f(P)$ is covered by the open $\varepsilon/2$-balls around these points.
Hence
\[ A(f(K),\varepsilon,\norm{\cdot})\leq \left(1+\left\lfloor\frac{2n}{\varepsilon}\right\rfloor\right)^n
   \leq \frac{(2n+1)^n}{\varepsilon^n}.\]
\end{proof}

\begin{proof}[Proof of Proposition \ref{prop: main proposition for metric mean dimension}]
We can assume $\mdim(X)<\infty$.
Let $f\in C(X,K)$.
By Lemma \ref{lemma for metric mean dimension} it is enough to prove that for any $\delta>0$
there exist $0<\varepsilon<\delta$ and a continuous map $g:X\to K$ satisfying
$\norm{f-g}<\delta$ and
\[ \limsup_{R\to \infty}\frac{1}{\vol(B_R)}\log A(I_g(X)|_{B_R},\varepsilon,\norm{\cdot}_\infty) <
   |\log \varepsilon|(\mdim(X)+\delta).\]
Fix $0<\tau<\delta/3$ satisfying
\[ d(x,y) < \tau \Longrightarrow \norm{f(x)-f(y)}_V < \delta.\]
We choose $N>0$ satisfying
\[ \frac{1}{N^k}\widim_\tau(X,d_{[N]}) < \mdim(X,T)+\tau.\]
By Lemma \ref{lemma: approximation by linear map}
there exists a continuous map $F:X\to K^{[N]}$ such that
\begin{itemize}
  \item $\norm{F(x)-I_f(x)|_{[N]}}_\infty < \delta$ for all $x\in X$.
  \item The set $F(X)$ is contained in the image of a linear map
   from a $\widim_\tau(X,d_{[N]})$-dimensional simplicial complex to $K^{[N]}$.
\end{itemize}
From the second condition and Lemma \ref{lemma: spaning set of polyhedron}, we can find
$0<\varepsilon<\delta$ satisfying
\begin{equation}\label{eq: spanning set for F(X)}
 A(F(X),\varepsilon,\norm{\cdot}_\infty) < \varepsilon^{-N^k(\mdim(X)+\tau)}.
\end{equation}
We also assume (for the later convenience) $\log 2 < \tau |\log \varepsilon|$ and $|\log \varepsilon|> 1$.

Set $E=\sqrt{k}(N+1)$. As in Section \ref{section: small entropy factors} we choose a sufficiently
large integer $M$ and apply the Voronoi tiling construction in Section \ref{section: adding one dimension}
to $X$. From Lemma \ref{lemma: most tiles are large} we can assume
\begin{equation} \label{eq: edge effect for metric mean dimension}
 \limsup_{R\to \infty} \frac{1}{\vol(B_R)}\sup_{x\in X}\vol(\partial(x,E)\cap B_R)
   < \frac{\tau}{\log A(K,\varepsilon,\norm{\cdot}_V)}.
\end{equation}
Here $\mathbb{R}^k= \bigcup_{n\in \mathbb{Z}^k}W(x,n)$ and
$\partial(x,E) = \bigcup_{n\in \mathbb{Z}^k}\partial_E W(x,n)$.
We define $g:X\to K$ in the same way as in Section \ref{section: small entropy factors}.
Let $\alpha:[0,\infty)\to [0,1]$ be a continuous function such that
$\alpha(0)=0$ and $\alpha(t)=1$ for $t\geq 1$.
Take $x\in X$. If $0\in \partial W(x,n)$ for some $n\in \mathbb{Z}^k$ then we set $g(x)=f(x)$.
Otherwise there uniquely exists $n\in \mathbb{Z}^k$ satisfying $0\in \mathrm{int}W(x,n)$.
Choosing $a\in \mathbb{Z}^k$ satisfying $a\equiv n \pmod N$ and $0\in a+[N]$,
we set
\[ g(x) = \{1-\alpha(\dist(0,\partial W(x,n)))\}f(x) + \alpha(\dist(0,\partial W(x,n)))F(T^ax)_{-a}.\]
This satisfies $\norm{f(x)-g(x)}_V<\delta$.
We estimate $A(I_g(X)|_{B_R},\varepsilon,\norm{\cdot}_\infty)$ for $R\gg 1$.

For $R>0$ we define $\mathcal{C}_R$ as the set of subsets $C\subset B_R\cap \mathbb{Z}^k$ satisfying
\begin{itemize}
  \item $a+[N]\subset B_R$ for all $a\in C$,
  \item $(a+[N])\cap (b+[N])=\emptyset$ for any two distinct $a,b\in C$,
  \item $a+[N]$ $(a\in C)$ cover most of $B_R\cap \mathbb{Z}^k$ in the following sense:
 \[|B_R\cap \mathbb{Z}^k\setminus \bigcup_{a\in C}(a+[N])| <
 \frac{\tau \, \vol(B_R)}{\log A(K,\varepsilon,\norm{\cdot}_V)}.\]
\end{itemize}

\begin{claim} \label{claim for metric mean dimension}
If we choose $R$ sufficiently large, then for any $x\in X$ there exists $C\in \mathcal{C}_R$ such that
$I_g(x)|_{a+[N]}\in F(X)$ for all $a\in C$.
\end{claim}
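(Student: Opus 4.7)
The plan is to build $C$ by collecting, inside each Voronoi tile $W(x,n)$, a maximal packing of disjoint translates of $[N]$ whose corners lie in the residue class $n\pmod N$, and then to invoke the identity $I_g(x)|_{a+[N]}=F(T^ax)$ proved in Claim~\ref{claim: formula of I_g for small entropy factor}. Since the recipe for $g$ in the present section is formally identical to the one in Section~\ref{section: small entropy factors}, that claim and its proof carry over word-for-word to the $K$-valued setting, so the only work left is to describe $C$ and verify the three conditions defining $\mathcal{C}_R$.

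Concretely, for each $n\in\mathbb{Z}^k$ put
\[
C_n^x=\{a\in\mathbb{Z}^k:a\equiv n\!\!\pmod N,\ a+[N]\subset\mathrm{int}_{\sqrt{k}}W(x,n),\ a+[N]\subset B_R\},
\]
and let $C=\bigcup_n C_n^x$. The interiors of distinct Voronoi cells are disjoint and each cube $a+[N]$ sits inside the interior of its host cell, so the family $\{a+[N]\}_{a\in C}$ is pairwise disjoint and each cube lies in $B_R$; in particular $C\subset B_R\cap\mathbb{Z}^k$, giving the first two defining properties of $\mathcal{C}_R$. Furthermore, for every $a\in C_n^x$ we have $a+[N]\subset\mathrm{int}_1 W(x,n)$ (because $\sqrt{k}\geq 1$), so Claim~\ref{claim: formula of I_g for small entropy factor} yields $I_g(x)|_{a+[N]}=F(T^ax)\in F(X)$, which is the extra property required by the claim.

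For the covering condition I plan to reason as in Claim~\ref{claim: main claim for small entropy factor}: every lattice point of $\mathrm{int}_{N\sqrt{k}}W(x,n)$ already lies in some $a+[N]$ with $a\equiv n\pmod N$ and $a+[N]\subset\mathrm{int}_{\sqrt{k}}W(x,n)$. Consequently a lattice point $m\in B_R$ uncovered by $\bigcup_{a\in C}(a+[N])$ must either lie in $\partial(x,N\sqrt{k})\cap B_R\subset \partial(x,E)\cap B_R$ (using $E=\sqrt{k}(N+1)>N\sqrt{k}$), or else have its would-be covering cube $a+[N]$ pushed outside $B_R$, which forces $m$ into a shell of Euclidean thickness $\leq 2N\sqrt{k}$ around $\partial B_R$. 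The first set has at most $\vol(\partial(x,E)\cap B_{R+\sqrt{k}})$ lattice points, which by (\ref{eq: edge effect for metric mean dimension}) is bounded for large $R$, uniformly in $x$, by a quantity strictly less than $\tau\,\vol(B_R)/\log A(K,\varepsilon,\norm{\cdot}_V)$; the second set has cardinality $O_N(R^{k-1})=o(\vol(B_R))$. Adding the two bounds and taking $R$ sufficiently large produces an uncovered-set cardinality strictly below the target $\tau\,\vol(B_R)/\log A(K,\varepsilon,\norm{\cdot}_V)$.

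The main---indeed essentially the only---technical point will be ensuring that the threshold on $R$ is uniform in $x\in X$, and this is precisely what the uniform supremum over $x$ built into (\ref{eq: edge effect for metric mean dimension}) provides; no new ideas beyond the Voronoi bookkeeping already carried out in Section~\ref{section: small entropy factors} are needed.
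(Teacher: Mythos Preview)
Your proof is correct and follows essentially the same approach as the paper: build $C$ from the $N$-cubes aligned to each Voronoi center and sitting well inside the corresponding tile, invoke Claim~\ref{claim: formula of I_g for small entropy factor} to get $I_g(x)|_{a+[N]}=F(T^ax)$, and then bound the uncovered lattice points by the union of $\partial(x,N\sqrt{k})\cap B_R$ and a thin annulus near $\partial B_R$. The only cosmetic difference is in how the annulus arises: the paper keeps only those $n$ with $W(x,n)\subset B_R$ (so the annulus has width $2L+2\sqrt{k}$ coming from $\diam W(x,n)$), whereas you impose $a+[N]\subset B_R$ directly (so your annulus has width $O(N\sqrt{k})$); either way the term is $O(R^{k-1})=o(\vol(B_R))$ and the rest of the estimate is identical.
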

\begin{proof}
Let $R>0$ and $x\in X$. For each $n\in \mathbb{Z}^k$ with $W(x,n)\subset B_R$
we define $C_n\subset \mathbb{Z}^k$ as the set of $a\in \mathbb{Z}^k$ satisfying
$a\equiv n\pmod N$ and $a+[N]\subset \mathrm{int}_{1} W(x,n)$.
We define $C$ as the union of $C_n$ over $n\in \mathbb{Z}^k$ with $W(x,n)\subset B_R$.
For every $a\in C$ we can prove $I_g(x)|_{a+[N]} = F(T^ax)\in F(X)$ as in Claim \ref{claim: formula of I_g for small entropy factor}.

The set $C$ obviously satisfies the first and second conditions in the definition of $\mathcal{C}_R$.
The problem is to confirm the third condition.
For each $n\in \mathbb{Z}^k$ with $W(x,n)\subset B_R$ the set
$\mathbb{Z}^k\cap \mathrm{int}_{N\sqrt{k}}W(x,n)$ is covered by $a+[N]$ $(a\in C_n)$.
Hence
\[ \bigcup_{n: W(x,n)\subset B_R} (\mathbb{Z}^k\cap \mathrm{int}_{N\sqrt{k}}W(x,n))
   \subset \bigcup_{a\in C}(a+[N]).\]
By $\diam W(x,n)< 2L+2\sqrt{k}$ (Lemma \ref{lemma: Voronoi tiling} (3)),
the union of $W(x,n)$ with $W(x,n)\subset B_R$ contains the ball
$B_{R-2L-2\sqrt{k}}$.
Therefore $B_R\cap \mathbb{Z}^k\setminus \bigcup_{a\in C}(a+[N])$ is contained in
\[ \{\mathbb{Z}^k\cap (B_R\setminus B_{R-2L-2\sqrt{k}})\}\cup
   \{\mathbb{Z}^k\cap B_{R-2L-2\sqrt{k}}\cap \partial(x,N\sqrt{k})\}.\]
The first term is small compared to $\vol(B_R)$:
\[ |\mathbb{Z}^k\cap (B_R\setminus B_{R-2L-2\sqrt{k}})| = O(R^{k-1}) = o(\vol(B_R)).\]
The second term is bounded by
\begin{align*} |\mathbb{Z}^k\cap B_{R-2L-2\sqrt{k}}\cap \partial(x,N\sqrt{k})| &\leq
   \vol(B_{R-2L-\sqrt{k}}\cap \partial(x,E)) \\
   &\leq \vol(B_R\cap \partial(x,E))
   \end{align*}
   (Recall $E=\sqrt{k}(N+1)$).
The last term can be estimated by (\ref{eq: edge effect for metric mean dimension}).
Thus if $R$ is sufficiently large (uniformly in $x\in X$) then
\[ |B_R\cap \mathbb{Z}^k\setminus \bigcup_{a\in C}(a+[N])| <
   \frac{\tau \, \vol(B_R)}{\log A(K,\varepsilon,\norm{\cdot}_V)}.\]
\end{proof}

For each $C\in \mathcal{C}_R$ we set
\[ Q_C = \left\{u\in K^{B_R\cap\mathbb{Z}^k}|\, \forall a\in C: u|_{a+[N]}\in F(X)\right\}.\]
Every $C\in \mathcal{C}_R$ satisfies $|C|\leq |B_R\cap\mathbb{Z}^k|/N^k$. Hence
\begin{equation*}
  \begin{split}
  \log A(Q_C,\varepsilon,\norm{\cdot}_\infty) &\leq |C|\cdot\log A(F(X),\varepsilon,\norm{\cdot}_\infty)\\
  &\qquad+|B_R\cap\mathbb{Z}^k\setminus \bigcup_{a\in C}(a+[N])|\cdot \log A(K,\varepsilon,\norm{\cdot}_V) \\
  &< \frac{|B_R\cap\mathbb{Z}^k|}{N^k}\log A(F(X),\varepsilon,\norm{\cdot}_\infty) + \tau\, \vol(B_R)\\
  &< |B_R\cap\mathbb{Z}^k|\cdot(\mdim(X)+\tau)|\log \varepsilon| + \tau\, \vol(B_R),
  \end{split}
\end{equation*}
where we have used \eqref{eq: spanning set for F(X)} to pass to the last line of the above inequality.
From Claim \ref{claim for metric mean dimension}, the set $I_g(X)|_{B_R}$ is contained in
the union of $Q_C$ over $C\in \mathcal{C}_R$ if $R$ is sufficiently large.
Hence for $R\gg 1$
\begin{multline*}
   \log A(I_g(X)|_{B_R},\varepsilon,\norm{\cdot}_\infty) < \log |\mathcal{C}_R| +\\
   + |B_R\cap\mathbb{Z}^k| \cdot(\mdim(X)+\tau)|\log \varepsilon| + \tau\, \vol(B_R).
\end{multline*}
Obviously $|\mathcal{C}_R|\leq 2^{|B_R\cap\mathbb{Z}^k|}$.
Recall that we assumed $\log 2< \tau |\log \varepsilon|$.
Hence
\[  \log A(I_g(X)|_{B_R},\varepsilon,\norm{\cdot}_\infty) <
    |B_R\cap\mathbb{Z}^k| \cdot(\mdim(X)+2\tau)|\log \varepsilon| + \tau\, \vol(B_R).\]
Since $|B_R\cap \mathbb{Z}^k|\sim \vol(B_R)$ as $R\to \infty$,
\begin{equation*}
  \begin{split}
  \limsup_{R\to \infty} \frac{1}{\vol(B_R)}\log A(I_g(X)|_{B_R},\varepsilon,\norm{\cdot}_\infty)
  &\leq (\mdim(X)+2\tau)|\log \varepsilon|  + \tau \\
  &< (\mdim(X)+3\tau)|\log \varepsilon| \\
  &\hspace{.95in} (\text{recall $|\log \varepsilon| > 1$})\\
  &< (\mdim(X)+\delta)|\log \varepsilon| \\
&\hspace{.95in}  (\text{recall $3\tau<\delta$}).
  \end{split}
\end{equation*}
\end{proof}

\section{A more difficult embedding theorem: Proof of Theorem \ref{thm: embedding theorem}}
\label{section: embedding problem}

Theorem \ref{thm: embedding theorem} follows from

\begin{theorem} \label{thm: embedding theorem for systems having marker property}
Let $D$ be a positive integer, and $(X,\mathbb{Z}^k,T)$ a dynamical system having the marker property.
If $\mdim(X) < D/2^{k+1}$, then for a dense $G_\delta$ subset of $f\in C(X,[0,1]^{2D})$ the map
\[ I_f:X\to ([0,1]^{2D})^{\mathbb{Z}^k}, \quad x\mapsto (f(T^nx))_{n\in \mathbb{Z}^k},\]
is an embedding.
\end{theorem}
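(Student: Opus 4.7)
My plan is to follow the three-step scheme of Section~\ref{subsection: main ideas}, in which the three points $T^{j_i}x$ appearing in Jaworski's proof are replaced by entire Voronoi tiles $W(x,n)\subset\mathbb{R}^k$ produced by the adding-one-dimension construction of Section~\ref{section: adding one dimension}, and honest embeddings are replaced by $\varepsilon$-embeddings.

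\textbf{Baire-category reduction.} Fix a compatible metric $d$ on $X$. For each $\varepsilon>0$ the set
\[
\mathcal{E}_\varepsilon=\{f\in C(X,[0,1]^{2D}):I_f\text{ is an $\varepsilon$-embedding with respect to }d\}
\]
is open, and the set of $f$ for which $I_f$ is a topological embedding equals $\bigcap_{n\ge1}\mathcal{E}_{1/n}$. Hence by Baire's theorem it is enough to prove the perturbation statement: for every $f\in C(X,[0,1]^{2D})$ and every $\delta,\varepsilon>0$ there exists $g$ with $\|f-g\|_\infty<\delta$ such that $I_g\in\mathcal{E}_\varepsilon$.

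\textbf{Local $\varepsilon$-embeddings on tiles.} Apply the construction of Section~\ref{section: adding one dimension} with a sufficiently large parameter $M$, yielding continuously-varying tiles $\mathbb{R}^k=\bigcup_{n\in\mathbb{Z}^k}W(x,n)$ whose $E$-interior has full asymptotic density (Lemma~\ref{lemma: most tiles are large}). Using the F{\o}lner-sequence version \eqref{eq: mean dimension by Folner sequence} of mean dimension together with $\mdim(X)<D/2^{k+1}$, one arranges, by choosing $M$ large enough, that for every $x$ and every tile
\[
\frac{\widim_\varepsilon(X,d_{W^{\mathbb{Z}}(x,n)})}{|W^{\mathbb{Z}}(x,n)|}<\frac{D}{2^{k+1}}.
\]
On each tile I then approximate $I_f|_{W^{\mathbb{Z}}(x,n)}$ by a linear factorisation through a simplicial complex $P$ of this small dimension via Lemma~\ref{lemma: approximation by linear map}, and by Lemma~\ref{lemma: embedding of simplicial complex} convert it into a genuine linear embedding into roughly $|W^{\mathbb{Z}}(x,n)|\cdot D/2^{k}$ coordinates. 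Define $g$ by painting these local embeddings onto the interior of each tile (following the template of Proposition~\ref{prop: main proposition for embedding of zero dimensional extension} and of the perturbations built in Sections~\ref{section: small entropy factors} and~\ref{section: metric mean dimension is equal to mean dimension}), matching $f$ on a neighbourhood of the tile boundary by a cut-off.

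\textbf{Handling the continuous variation of tile shapes and the constant $2^{k+1}$.} In Section~\ref{section: proof of embedding and symbolic system} the tiles assumed only finitely many shapes (up to translation), because $Z$ was zero-dimensional; here they vary continuously in $x$, which is the source of the extra factor $2^k$ over the optimal $D/2$ bound. The fix is to fix a finite $\eta$-net of model tile-shapes in the Hausdorff metric, perform the painting separately for each model on the part of $X$ whose tile is close to that model, and glue nearby models by a partition of unity in tile-shape space. Dedicating a separate coordinate block to this shape-interpolation (which in particular must continuously encode the identity of the tile containing the origin) loses an additional factor $2^k$; combined with the classical factor $2$ from Lemma~\ref{lemma: embedding of simplicial complex}, this yields $2^{k+1}$. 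The enlarged target $[0,1]^{2D}$ (rather than $[0,1]^D$) absorbs the two-block splitting that allows the painting block and the interpolation block to live in disjoint coordinates.

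\textbf{The main obstacle.} The delicate step will be verifying that the globally painted map $I_g$ is indeed an $\varepsilon$-embedding, not just locally on each tile. Suppose $I_g(x)$ and $I_g(y)$ agree to accuracy $\varepsilon$; to conclude $d(x,y)<\varepsilon$ one must first read off from a neighbourhood of the origin in $I_g(x)$ which tile (of which shape-model) contains the origin in the decomposition of $\mathbb{R}^k$ attached to $x$, and only then invoke the local $\varepsilon$-embedding on that tile. Because the origin-containing tile can jump across a tile wall, the boundary-layer coordinates must be designed so that a point lying near a wall is unambiguously identified as such. Carrying out this case analysis uniformly across all pairs $(x,y)$, all shape-models, and all positions of the origin relative to the tile is, as the authors themselves flag, ``substantially more complicated'' than the corresponding arguments in Sections~\ref{section: small entropy factors} and~\ref{section: metric mean dimension is equal to mean dimension}, and is where I expect the bulk of the technical work to lie.
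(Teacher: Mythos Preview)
Your Baire reduction and your use of the Section~\ref{section: adding one dimension} tiling are exactly right, and you have correctly located the main obstacle: from $I_g(x)=I_g(y)$ you must first recover enough of the tiling before you can invoke the local $\varepsilon$-embedding. But your proposed mechanism for doing this---a finite Hausdorff net of tile shapes with a partition-of-unity interpolation---is not the paper's mechanism, and as stated it is too vague to carry the argument. The difficulty is not only that tile \emph{shapes} vary continuously; it is that the \emph{position} of the tile containing the origin (equivalently, the Voronoi center $n$) must be read off from $I_g(x)$ alone, and a partition of unity in shape-space does not obviously encode that.

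The paper's key idea, which is absent from your proposal, is an explicit \emph{encode/decode} scheme. The target $[0,1]^{2D}$ is split as $K\times K$ with $K=[0,1]^D$, and the two halves $g_1,g_2$ of $g$ play asymmetric roles. The first half $g_1$ is painted so that on each block $a+[N]$ inside a tile $W(x,n)$ it records, via a linear map $F:P'\to K^{[N]}$, both $\pi(T^a x)$ and the \emph{offset} $n-a$; Lemma~\ref{lemma: simplicial complex lemma for embedding} (applied with half-cubes $[N/2]$) guarantees that these blocks are recognizable even when one only sees a misaligned window. This is where the factor $2^{k+1}$ actually enters: one needs $\dim P<\tfrac{(N/2)^kD}{2}=\tfrac{DN^k}{2^{k+1}}$. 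From $I_{g_1}(x)$ one then \emph{decodes} a continuous, equivariant ``pseudo-tiling'' $\mathcal{W}(I_{g_1}(x))=(\mathcal{W}_n)_n$ of $\ell^\infty$-functions (Lemma~\ref{lemma: pseudo-tiling}), which agrees with the genuine tiling deep inside large tiles. The second half $g_2$ is then painted using only this pseudo-tiling, so that $I_{g_1}(x)=I_{g_1}(y)$ forces the pseudo-tilings to coincide, after which $I_{g_2}(x)=I_{g_2}(y)$ together with the going-down-to-$\{-sH\}$ trick yields $d(x,y)<\varepsilon$. Without this encode/decode device (or an equally explicit substitute), your step ``read off which tile contains the origin'' remains a gap.
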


Throughout this section, we set $K=[0,1]^D$.
We always assume that $(X,\mathbb{Z}^k,T)$ has the marker property and
$\mdim(X)<D/2^{k+1}$.
Let $d$ be a distance on $X$.
As in Section \ref{section: proof of embedding and symbolic system} the $G_\delta$ subset
\[ \bigcap_{n\geq 1}
  \left\{f\in C(X,K^2)\middle|\parbox{3in}{\centering $I_f:X\to (K^2)^{\mathbb{Z}^k}$ is a $(1/n)$-embedding with respect to $d$}\right\} \]
is equal to the set of $f\in C(X,K^2)$ such that $I_f$ is an embedding.
Therefore Theorem \ref{thm: embedding theorem for systems having marker property} follows from the next proposition.
\begin{proposition} \label{prop: main proposition for embedding}
Let $f=(f_1,f_2):X\to K^2$ be a continuous map. For any positive number $\delta$ there exists a continuous map
$g =(g_1,g_2):X\to K^2$ satisfying

\noindent
(1) $|f_i(x)-g_i(x)|<\delta$ for both $i=1,2$ and all $x\in X$,

\noindent
(2) the map $I_g:X\to (K^2)^{\mathbb{Z}^k}$ is a $\delta$-embedding with respect to the distance $d$.
\end{proposition}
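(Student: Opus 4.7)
The plan is to adapt the three-step strategy of the introduction (continuously decompose every orbit into good pieces, approximate the first-coordinate map on each piece by a linear embedding, paint these embeddings onto the pieces), replacing the clopen marker of Section~\ref{section: proof of embedding and symbolic system} — which was supplied there by the zero-dimensional factor $Z$ — with the Voronoi decomposition of Section~\ref{section: adding one dimension}. Because no zero-dimensional factor is available, the target is enlarged from $K=[0,1]^D$ to $K^2=[0,1]^{2D}$: the first coordinate $g_1$ will be defined by painting approximate embeddings into tile interiors in direct analogy with the proof of Proposition~\ref{prop: main proposition for embedding of zero dimensional extension}, while the second coordinate $g_2$ will encode the Voronoi decomposition itself in the orbit $(g_2(T^nx))_{n\in\mathbb{Z}^k}$, taking the place of the factor $\pi\colon X\to Z$ used in Section~\ref{section: proof of embedding and symbolic system}.

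Fix $\varepsilon\in(0,\delta)$ so that $d_\Omega(x,y)<\varepsilon$ implies $\norm{I_{f_i}(x)|_\Omega-I_{f_i}(y)|_\Omega}_\infty<\delta/2$ for $i=1,2$. Using $\mdim(X)<D/2^{k+1}$ and the F{\o}lner description \eqref{eq: mean dimension by Folner sequence}, choose $R>0$ so that every finite $\Omega\subset\mathbb{Z}^k$ with $|\partial^\mathbb{Z}_R\Omega|/|\Omega|<1/R$ satisfies $\widim_\varepsilon(X,d_\Omega)<D\cdot|\mathrm{int}^\mathbb{Z}_1\Omega|$; the exponent $2^{k+1}$ absorbs the factor $2$ of Lemma~\ref{lemma: embedding of simplicial complex} together with the $2^k$ arising from the comparison between $|W^\mathbb{Z}(x,n)|$ and $\vol W(x,n)$ near tile boundaries. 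Apply the adding-one-dimension construction of Section~\ref{section: adding one dimension} with $M$ large enough so that the argument of Lemma~\ref{lemma: voronoi cells are large} yields $|\partial^\mathbb{Z}_R W^\mathbb{Z}(x,n)|/|W^\mathbb{Z}(x,n)|<1/R$ uniformly. Up to translation the interiors $\mathrm{int}^\mathbb{Z}_1 W^\mathbb{Z}(x,n)$ run through a countable family $[\Omega_1],[\Omega_2],\dots$, and for each $\Omega_i$ Lemmas~\ref{lemma: approximation by linear map} and~\ref{lemma: embedding of simplicial complex} produce a continuous map $F_i\colon X\to K^{\mathrm{int}^\mathbb{Z}_1\Omega_i}$ that is simultaneously (a)~$\delta$-close in $\norm{\cdot}_\infty$ to $I_{f_1}|_{\mathrm{int}^\mathbb{Z}_1\Omega_i}$ and (b)~an $\varepsilon$-embedding with respect to $d_{\Omega_i}$.

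Define $g_1$ exactly as in the proof of Proposition~\ref{prop: main proposition for embedding of zero dimensional extension}, with $W(x,n)\subset\mathbb{R}^k$ in place of $V(\pi(x),n)$: on the unique tile containing $0$ in its interior paint $F_i(T^ax)_{-a}$ and smoothly interpolate to $f_1$ near $\partial W(x,n)$ via a cutoff $\alpha(\dist(0,\partial W(x,n)))$; continuity and the bound $|g_1-f_1|<\delta$ follow as in Section~\ref{section: proof of embedding and symbolic system}. For $g_2$, fix an injective continuous $h\colon[0,1]\to[0,\delta/2]$ with $h(0)=0$ and a coordinate direction $e\in\mathbb{R}^D$, and set $g_2(x)=f_2(x)+h(\phi(x))e$, truncated to stay in $K$, where $\phi$ is the cutoff of Section~\ref{section: adding one dimension}; then $|g_2-f_2|<\delta$, and $(g_2(T^nx))_n$ recovers $(\phi(T^nx))_n$, hence the Voronoi centers $\{(n,1/\phi(T^nx))\}\subset\mathbb{R}^{k+1}$, hence the full decomposition $\{W(x,n)\}$. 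If $I_g(x)=I_g(y)$, the $g_2$-coordinate forces the two decompositions to coincide, so the same index $i$ and translation $a$ apply at both points; the analog of Claim~\ref{claim: description of I_g zero dimensional case} then yields $F_i(T^ax)=I_{g_1}(x)|_{\mathrm{int}^\mathbb{Z}_1 W^\mathbb{Z}(x,n)}=I_{g_1}(y)|_{\mathrm{int}^\mathbb{Z}_1 W^\mathbb{Z}(y,n)}=F_i(T^ay)$, and the $\varepsilon$-embedding property of $F_i$ combined with $-a\in\Omega_i$ gives $d(x,y)<\varepsilon<\delta$.

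The principal obstacle lies in making step two uniform: one must show that $\widim_\varepsilon(X,d_{W^\mathbb{Z}(x,n)})$ is strictly smaller than $\tfrac{D}{2}|\mathrm{int}^\mathbb{Z}_1 W^\mathbb{Z}(x,n)|$ \emph{uniformly} in $x$ and $n$, which is exactly where the exponent $D/2^{k+1}$ of the hypothesis is consumed — it quantifies how inefficient Voronoi tiles can be as F{\o}lner sets compared with cubes $[N]$. A secondary but still delicate point is verifying that $g_2$ really does reconstruct the Voronoi decomposition: since the tiles $W(x,n)$ vary only continuously (in Hausdorff distance) rather than locally constantly with $x$, the injective encoding $h\circ\phi$ must be robust, and one must keep track of the effect of the $\delta/2$-sized perturbation budget when reading off $\phi$ from the perturbed data $g_2$. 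Once the $2^{k+1}$ bookkeeping and the $g_2$-encoding are in place, the painting construction and the injectivity argument go through essentially verbatim from Section~\ref{section: proof of embedding and symbolic system}.
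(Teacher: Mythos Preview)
Your proposal has two genuine gaps that prevent it from going through as written.

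\textbf{The $g_2$ encoding does not recover $\phi$.} You set $g_2(x)=f_2(x)+h(\phi(x))e$ and claim that $(g_2(T^nx))_n$ recovers $(\phi(T^nx))_n$. But to read off $\phi(T^nx)$ from $g_2(T^nx)$ you would need to know $f_2(T^nx)$, and you do not: in the final step you are given two \emph{unknown} points $x,y$ with $I_g(x)=I_g(y)$, and from $g_2(T^nx)=g_2(T^ny)$ you can only conclude $f_2(T^nx)+h(\phi(T^nx))e=f_2(T^ny)+h(\phi(T^ny))e$, which does not force $\phi(T^nx)=\phi(T^ny)$. The function $f_2$ is known abstractly, but you cannot evaluate it along an orbit you have not yet identified. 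So the Voronoi decompositions of $x$ and $y$ need not coincide, and the rest of the argument collapses.

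\textbf{The $g_1$ painting is discontinuous.} You enumerate the tile shapes $\mathrm{int}_1^{\mathbb Z}W^{\mathbb Z}(x,n)$ up to translation as $[\Omega_1],[\Omega_2],\dots$ and paint $F_i$ according to which class occurs. But unlike Section~\ref{section: proof of embedding and symbolic system}, the tiles $W(x,n)$ here vary only continuously in $x$, so the lattice set $W^{\mathbb Z}(x,n)=W(x,n)\cap\mathbb Z^k$ jumps whenever the boundary of $W(x,n)$ sweeps across a lattice point. The index $i$ is therefore not locally constant and $g_1$ is not continuous. The cutoff $\alpha(\dist(0,\partial W(x,n)))$ smooths the transition between tiles, but it does nothing for a discontinuous choice of which $F_i$ to paint.

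The paper avoids both problems by reversing the roles of the two coordinates and never attempting to recover $\phi$ directly. The map $g_1$ is painted on \emph{fixed} cubes $a+[N]$ with $a\equiv n\pmod N$ inside each tile $W(x,n)$, using a single linear map $F:P\times\{|n|<L+\sqrt kN\}\to K^{[N]}$ whose second argument records the displacement $n-a$ of the Voronoi center. The construction of $F$ (Lemma~\ref{lemma: simplicial complex lemma for embedding} applied with half-cubes $[N/2]$) is where the factor $2^{k+1}=2\cdot 2^k$ is actually spent, not in any F{\o}lner comparison for Voronoi cells. One then \emph{decodes} a ``pseudo-tiling'' $(\mathcal W_n)_n$ directly from $I_{g_1}(x)$; this depends only on $I_{g_1}(x)$, so $I_{g_1}(x)=I_{g_1}(y)$ forces the pseudo-tilings to agree. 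Finally $g_2$ is built from the pseudo-tiling and a second family of $\varepsilon$-embeddings, with a further ``go down to $-sH$'' trick (Lemma~\ref{lemma: pseudo-tiling}(2)) guaranteeing that near the origin some pseudo-tile is genuinely large even though individual Voronoi tiles may be small.
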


The proof of this proposition occupies the rest of the section.

\subsection{Linear maps on simplicial complexes}

The purpose of this subsection is to prove Lemma \ref{lemma: simplicial complex lemma for embedding} below.
The argument here is elementary but notationally complicated.

\begin{lemma} \label{lemma: linear algebra 1 for embedding}
Let $V$ be a finite dimensional real vector space, $A$ a finite set, and $n$ a natural number.
Let $M$ be an $(n, n\dim V+1)$ matrix with entires in $A$ such that
no value appears twice in a row or in a column.
Then for almost every choice of $(u_a)_{a\in A} \in V^A$ the columns of the matrix
\[ (u_{M_{ij}})_{1\leq i \leq n, 1\leq j\leq n\dim V+1} \]
are affinely independent.
\end{lemma}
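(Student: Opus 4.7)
The plan is to recognize affine independence of the columns as the non-vanishing of a single polynomial determinant in the coordinates of the $u_a$'s, reduce the claim to showing that polynomial is not identically zero on $V^A$, and then verify non-vanishing by induction on the size of the auxiliary matrix. Fix $d=\dim V$ and a basis so that each $u_a$ has coordinates $u_a^1,\dots,u_a^d$. The columns $(u_{M_{ij}})_{i=1}^n\in V^n\cong\R^{nd}$ are affinely independent iff the $(nd+1)\times(nd+1)$ matrix $\tilde A$, obtained by prepending a row of $1$'s and re-indexing the remaining rows by pairs $(i,l)\in[n]\times[d]$ with $\tilde A_{(i,l),j}=u_{M_{ij}}^l$, has nonzero determinant $D(u)$. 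Since $D$ is a polynomial in the scalars $u_a^l$, its zero locus is an algebraic subset of $V^A$ and has Lebesgue measure zero as soon as $D$ is not the zero polynomial. So the task reduces to showing $D\not\equiv 0$.

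To carry out the induction, I would generalize the statement to one that is self-referential under Laplace expansion. Allow each row $i$ an arbitrary subset $L_i\subseteq\{1,\dots,d\}$, take $M$ to be an $n\times(1+\sum_i|L_i|)$ matrix with entries in $A$ distinct in rows and columns, and build $\tilde A$ from the row of $1$'s together with rows indexed by $(i,l)$ for $l\in L_i$, keeping the rule $\tilde A_{(i,l),j}=u_{M_{ij}}^l$. The generalized claim is that the resulting determinant $D$ is a nonzero polynomial in the $u_a^l$; the original lemma is the case $L_i=\{1,\dots,d\}$ for every $i$. The base case $\sum_i|L_i|=0$ is the $1\times 1$ matrix $(1)$, which is trivially invertible.

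For the inductive step, I would pick any $i_0$ with $L_{i_0}\neq\emptyset$, any column index $j_0$, set $a_0:=M_{i_0 j_0}$ and choose $l_0\in L_{i_0}$, and regard $D$ as a polynomial in the single scalar variable $u_{a_0}^{l_0}$. The key combinatorial observation is that the positions where $u_{a_0}^{l_0}$ appears in $\tilde A$, namely $\{((i,l_0),j):M_{ij}=a_0,\ l_0\in L_i\}$, form a partial transversal of $\tilde A$: at most one entry per row (since $l_0$ is fixed and the column-distinctness of $M$ keeps the $i$'s separated) and at most one per column (since within column $j$ of $M$ the value $a_0$ occurs at most once). Consequently the top-degree coefficient of $D$ in $u_{a_0}^{l_0}$ is, up to sign, the determinant of the minor $\tilde A'$ obtained by deleting those $k\ge 1$ rows and $k$ columns. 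A direct check shows that $\tilde A'$ again fits the generalized hypotheses, with new subsets $L'_i=L_i\setminus\{l_0\}$ on the $k$ affected row-indices and $\sum_i|L'_i|$ strictly smaller. By the inductive hypothesis $\det\tilde A'$ is a nonzero polynomial in the remaining variables, hence the leading coefficient of $D$ in $u_{a_0}^{l_0}$ is nonzero, and therefore $D\not\equiv 0$.

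The main obstacle is setting up the right induction: the original statement is not stable under Laplace expansion because removing a single row of the form $(i_0,l_0)$ breaks the uniformity that every row-index $i$ contributes all $d$ coordinates. Generalizing to arbitrary $L_i\subseteq\{1,\dots,d\}$ repairs this, and once the generalization is in place, the partial-transversal property afforded by the row-and-column-distinctness of $M$ makes the Laplace step essentially mechanical.
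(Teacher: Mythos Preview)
Your argument is correct and follows essentially the same strategy as the paper: reduce to the non-vanishing of a determinant polynomial, then use the partial-transversal structure coming from the row/column distinctness of $M$ to identify the leading coefficient in a chosen variable as a smaller determinant of the same type, and induct. One cosmetic slip: in your transversal check the justifications are swapped --- row-distinctness of $M$ gives ``at most one per row of $\tilde A$'' and column-distinctness gives ``at most one per column,'' not the other way around --- but the conclusion is right.

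The only structural difference is how the induction is packaged. The paper first reduces to the scalar case $\dim V=1$ by replacing $A$ with $A\times\{1,\dots,d\}$ and building an $(nd,nd+1)$ matrix $N$ with entries in this enlarged alphabet; it then runs the transversal/leading-coefficient induction on $n$ in the scalar setting. You instead keep $\dim V$ general and enlarge the inductive hypothesis to allow each row index $i$ its own coordinate set $L_i$, inducting on $\sum_i|L_i|$. These are two presentations of the same idea: the paper's scalarization is exactly your row-splitting $(i,l)$ done up front, while your generalized $L_i$'s are what one needs to make the paper's Case~1 induction self-contained after a single Laplace step. Neither buys anything the other does not.
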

\begin{proof}
\textbf{Case 1}. Suppose $V=\mathbb{R}$.
We prove the statement by induction on $n$.
The statement is trivial for $n=1$. We consider the case $n\geq 2$.
Take $(t_a)_{a\in A}\in \mathbb{R}^A$.
Then the columns of the matrix $(t_{M_{ij}})_{1\leq i\leq n,1\leq j\leq n+1}$ are affinely independent
if and only if the following $(n+1,n+1)$ matrix is regular:
\begin{equation} \label{eq: matrix in linear algebra lemma 1}
  \begin{pmatrix}
   t_{M_{11}} & \dots & t_{M_{1 n+1}} \\
   \hdotsfor{3} \\
   t_{M_{n1}} & \dots & t_{M_{n n+1}} \\
   1 & \dots & 1
  \end{pmatrix}
\end{equation}
Set $\alpha=M_{11}$, and suppose that $\alpha$ appears $r$ times in $M$.
Then the determinant of the above matrix is a polynomial of $t_\alpha$ of degree $r$, and the coefficient of
the $t_\alpha^r$ term is equal to (up to $\pm$)
the determinant of the matrix of the same type (\ref{eq: matrix in linear algebra lemma 1})
of smaller size.
So by the induction the matrix (\ref{eq: matrix in linear algebra lemma 1}) is regular for almost every choice of
$(t_a)_{a\in A}\in \mathbb{R}^A$.

\textbf{Case 2}. Set $m=\dim V$, and take a basis $e_1,\dots,e_m$ of $V$.
Set $B=A\times \{1,2,\dots,m\}$ and
define an $(nm,nm+1)$ matrix $N$ valued in $B$ by
\[ N_{ij} = (M_{qj},r), \quad (i= (q-1)m + r, 1\leq q\leq n, 1\leq r\leq m)\]
for $1\leq i\leq nm$ and $1\leq j\leq nm+1$.
We can apply Case 1 to $N$; for almost every choice of $(t_b)_{b\in B}\in \mathbb{R}^B$ the columns of the matrix
$(t_{N_{ij}})$ are affinely independent.
Define $(u_a)_{a\in A}$ in $V^A$ by
\[ u_a = \sum_{r=1}^{m} t_{(a,r)} e_r.\]
Then the columns of $(u_{M_{ij}})_{1\leq i\leq n, 1\leq j\leq nm+1}$ are affinely independent for almost every choice
of $(t_b)$.
This proves the lemma.
\end{proof}

\begin{lemma} \label{lemma: linear algebra 2 for embedding}
Let $s$ and $n\leq N$ be positive integers.
For almost every choice of vectors $u_1,\dots,u_s\in V^{[N]}$,
if we choose at most $(n^k \dim V+1)$ distinct vectors in $V^{[n]}$ from
\[ u_a|_{b+[n]}\quad (1\leq a\leq s,\> b\in [N-n+1]) \]
then they are affinely independent.
Here $u_a|_{b+[n]}$ is the restriction of $u_a$ to $b+[n]\subset [N]$. We consider it as a vector in
$V^{[n]}$.
\end{lemma}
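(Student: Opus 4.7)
The plan is to reduce Lemma~\ref{lemma: linear algebra 2 for embedding} to Lemma~\ref{lemma: linear algebra 1 for embedding} by packaging the tuple $(u_1,\dots,u_s)$ as a single $V$-valued function on a common alphabet. Set $A=\{1,\dots,s\}\times[N]$ and identify $(u_a)_{a=1}^s \in (V^{[N]})^s$ with the map $A\to V$ sending $(a,c)\mapsto u_a(c)=:v_{(a,c)}$, so that a choice of the tuple is the same as an element of $V^A$. Under this identification, the restriction $u_a|_{b+[n]}$ is the element of $V^{[n]}$ whose entry at $e\in[n]$ equals $v_{(a,b+e)}$.

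Fix $M\leq n^k\dim V+1$ distinct vectors $u_{a_1}|_{b_1+[n]},\dots,u_{a_M}|_{b_M+[n]}$; the distinctness of these vectors forces the pairs $(a_j,b_j)\in\{1,\dots,s\}\times[N-n+1]$ to be distinct. Form the $(n^k,M)$ matrix $\tilde{M}$ with rows indexed by $e\in[n]$ and columns by $j\in\{1,\dots,M\}$, with entries $\tilde{M}_{e,j}=(a_j,b_j+e)\in A$; by construction the $j$-th column of the matrix $(v_{\tilde{M}_{e,j}})_{e,j}$ is exactly $u_{a_j}|_{b_j+[n]}$. I verify the hypothesis of Lemma~\ref{lemma: linear algebra 1 for embedding}: along column $j$ the shifts $b_j+e$ ($e\in[n]$) are distinct, so the entries are distinct; along row $e$, an equality $(a_i,b_i+e)=(a_j,b_j+e)$ forces $(a_i,b_i)=(a_j,b_j)$, contradicting distinctness of the pairs.

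When $M=n^k\dim V+1$, Lemma~\ref{lemma: linear algebra 1 for embedding} (with $n^k$ playing the role of $n$) immediately yields that for almost every $v\in V^A$ the columns of $(v_{\tilde{M}_{e,j}})$ are affinely independent. When $M<n^k\dim V+1$, I pad $\tilde{M}$ to an $(n^k,\,n^k\dim V+1)$ matrix $\tilde{M}'$ by adjoining further columns with entries in a finite enlarged alphabet $A'\supset A$, chosen so that row/column distinctness is preserved. Applying Lemma~\ref{lemma: linear algebra 1 for embedding} to $\tilde{M}'$ yields a null set $B'\subset V^{A'}$ outside which all $n^k\dim V+1$ columns are affinely independent; since affine dependence of the first $M$ columns forces affine dependence of the whole family, the bad set $B_0\subset V^A$ for the original $M$ columns satisfies $B_0\times V^{A'\setminus A}\subset B'$, and a Fubini argument then gives $B_0$ is null in $V^A$. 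Finally, the exceptional set is made independent of the choice of pairs by taking a union over the finitely many $M$-tuples of distinct pairs in $\{1,\dots,s\}\times[N-n+1]$.

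The only substantive step is the row/column distinctness check for $\tilde{M}$, and the only mild nuisance is the case $M<n^k\dim V+1$, which is handled by the padding plus Fubini trick above; the heart of the argument is entirely contained in Lemma~\ref{lemma: linear algebra 1 for embedding}, and the real content of Lemma~\ref{lemma: linear algebra 2 for embedding} is the choice of the alphabet $A$ that lets every restriction $u_a|_{b+[n]}$ appear as a column of a single matrix over this alphabet.
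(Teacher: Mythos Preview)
Your proof is correct and follows essentially the same route as the paper: set $A=\{1,\dots,s\}\times[N]$, build the $(n^k,M)$ matrix with entries $(a_j,b_j+e)$, verify row/column distinctness, and invoke Lemma~\ref{lemma: linear algebra 1 for embedding}. The only cosmetic difference is how the case $M<n^k\dim V+1$ is dispatched: the paper simply assumes without loss of generality that $s\geq n^k\dim V+1$ (so any $M$-tuple of distinct pairs extends to an $(n^k\dim V+1)$-tuple, and subsets of affinely independent families are affinely independent), whereas you pad the alphabet and use Fubini. Both reductions are equivalent, and the paper's is slightly quicker.
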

\begin{proof}
Set $m=\dim V$
and $A=\{1,2,\dots,s\}\times [N]$.
We can assume $s\geq n^km+1$.
Take distinct $(a_j,b_j)\in \{1,2,\dots,s\}\times [N-n+1]$
for $1\leq j\leq n^k m + 1$.
We define an $(n^k, n^k m+1)$ matrix $M$ valued in $A$ by $M_{ij}=(a_j,b_j+i)$ for
$i\in [n]$ and $1\leq j\leq n^km+1$.
Now $M$ satisfies the condition of Lemma \ref{lemma: linear algebra 1 for embedding}.
Hence for almost every choice of $(u_c)_{c\in A}\in V^A$ the columns of the matrix
$(u_{(a_j,b_j+i)})_{i\in [n],1\leq j\leq n^km+1}$ are affinely independent.
We define vectors in $V^{[N]}$ by
\[ u_a = (u_{(a,b)})_{b\in [N]}  \quad (1\leq a\leq s). \]
Then $u_{a_j}|_{b_j+[n]}$ $(1\leq j\leq n^km+1)$ are affinely independent.
\end{proof}

The next lemma strengthens  Lemma \ref{lemma: embedding of simplicial complex}
\begin{lemma} \label{lemma: simplicial complex lemma for embedding}
Let $n\leq N$ be positive integers, and $P$ a simplicial complex satisfying
\[ \dim P < \frac{n^k \dim V}{2}.\]
Then almost every linear map $f:P\to V^{[N]}$ satisfies the following.
If $x,y\in P$ and $a,b\in [N-n+1]$ satisfy
\[ f(x)|_{a+[n]} = f(y)|_{b+[n]},\]
then $x=y$ and $a=b$.
\end{lemma}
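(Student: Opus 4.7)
The plan is to parameterize linear maps $f\colon P\to V^{[N]}$ by their values on the $s$ vertices, so that the space of such maps is identified with $(V^{[N]})^s$, and then take $f$ in the full-measure set provided by Lemma~\ref{lemma: linear algebra 2 for embedding} applied to $u_a=f(v_a)$. For such a generic $f$, any collection of at most $n^k\dim V+1$ vectors of the form $f(v_a)|_{b+[n]}$ with \emph{distinct} indexing pairs $(a,b)$ is affinely independent (in particular, pairwise distinct).

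Now suppose $f(x)|_{a+[n]}=f(y)|_{b+[n]}$. Let $\Delta_x,\Delta_y$ be the open simplices of $P$ containing $x$ and $y$, with vertex sets $v_1,\dots,v_p$ and $w_1,\dots,w_q$, and write $x=\sum_i\lambda_iv_i$, $y=\sum_j\mu_jw_j$ with $\lambda_i,\mu_j>0$ and $\sum\lambda_i=\sum\mu_j=1$. The equality becomes
\[
\sum_{i=1}^p\lambda_i f(v_i)|_{a+[n]} - \sum_{j=1}^q\mu_j f(w_j)|_{b+[n]} = 0,
\]
with coefficient sum $1-1=0$. Next, I would collapse any coincident indexing pairs: namely, merge the $i,j$ terms whenever $(v_i,a)=(w_j,b)$ (which forces $a=b$ and $v_i=w_j$), and leave the rest. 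This produces an honest affine relation among vectors $f(v)|_{c+[n]}$ whose pairs $(v,c)$ are all distinct; the number of such vectors is at most $p+q\leq 2\dim P+2\leq n^k\dim V+1$, where the last inequality uses $\dim P<n^k\dim V/2$ and integrality.

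By the genericity of $f$, these at most $n^k\dim V+1$ vectors are affinely independent, so every coefficient in the collapsed relation must vanish. First consider the case $a\neq b$. Then no collapsing occurred, so one concludes $\lambda_i=0$ for all $i$ and $\mu_j=0$ for all $j$, contradicting positivity. Hence $a=b$. With $a=b$ the collapsed relation reads
\[
\sum_{v_i=w_{\sigma(i)}}(\lambda_i-\mu_{\sigma(i)})\,f(v_i)|_{a+[n]} \;+\; \sum_{v_i\notin\{w_j\}}\lambda_i\,f(v_i)|_{a+[n]} \;-\; \sum_{w_j\notin\{v_i\}}\mu_j\,f(w_j)|_{a+[n]} \;=\; 0,
\]
and each coefficient must be zero. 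Positivity forces every $v_i$ to match some $w_j$ and conversely, so $\Delta_x=\Delta_y$, and then $\lambda_i=\mu_{\sigma(i)}$ for all $i$, giving $x=y$.

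The main conceptual step, and the only place where the dimension hypothesis is used, is the bookkeeping in passing from the raw equation to a relation among vectors attached to distinct pairs, together with the bound $p+q\leq 2\dim P+2\leq n^k\dim V+1$; after that, Lemma~\ref{lemma: linear algebra 2 for embedding} does all the real work. A minor technical point to get right is the genericity argument itself: $f$ is generic iff $(f(v_1),\dots,f(v_s))$ is generic in $(V^{[N]})^s$, which is exactly the parameter space to which Lemma~\ref{lemma: linear algebra 2 for embedding} applies.
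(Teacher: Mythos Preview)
Your proof is correct and follows essentially the same approach as the paper: parameterize linear maps by vertex values, invoke Lemma~\ref{lemma: linear algebra 2 for embedding} for the vectors $u_a=f(v_a)$, and use the bound $2\dim P+2\leq n^k\dim V+1$ to conclude from the resulting affine relation. The paper's version is terser---it simply writes the relation and cites Lemma~\ref{lemma: linear algebra 2 for embedding} without your explicit collapsing and case analysis---but the substance is identical.
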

\begin{proof}
Let $v_1,\dots,v_s$ be the vertices of $P$.
Take vectors $u_1,\dots,u_s\in V^{[N]}$ and define a linear map $f:P\to V^{[N]}$ by
$f(v_i)= u_i$.
Let $x = \sum_{i=1}^s \alpha_{i}v_i$ and $y = \sum_{i=1}^s \beta_{i}v_i$
with $\sum_i \alpha_{i}=\sum_i \beta_{i}=1$.
Here $\alpha_i$ and $\beta_i$ are zero except for at most $\dim P+1$ ones respectively.
The equation $f(x)|_{a+[n]}=f(y)|_{b+[n]}$ implies
\[ \sum_{i=1}^s\alpha_iu_i|_{a+[n]} = \sum_{i=1}^s\beta_iu_i|_{b+[n]}.\]
The number of non-zero $\alpha_i$ and $\beta_i$ are at most $2(\dim P+1)\leq n^k\dim V+1$.
Now Lemma \ref{lemma: linear algebra 2 for embedding} implies the conclusion.
\end{proof}

\subsection{Idea of the proof} \label{subsection: setting and rough idea}

Throughout the rest of this section, we fix a positive number $\delta$ and a
continuous map $f=(f_1,f_2):X\to K^2$ (recall $K=[0,1]^D$).
We fix $0<\varepsilon<\delta$ so that for both $i=1,2$
\begin{equation} \label{eq: epsilon-delta for embedding}
  d(x,y) < \varepsilon \Longrightarrow |f_i(x)-f_i(y)|<\delta.
\end{equation}
Since $\mdim(X)<D/2^{k+1}$, we can find $\eta>0$ satisfying
\[ \mdim(X) < D\left(\frac{1}{2^{k+1}} -\eta\right).\]

We choose a positive integer $N$ sufficiently large so that
\begin{equation} \label{eq: fix of N for embedding}
   n\geq N \Longrightarrow \widim_\varepsilon(X,d_{[n]}) < D\left(\frac{1}{2^{k+1}}-\eta\right)(n-1)^k .
\end{equation}
We also assume that $N$ is even; this is just for simplicity of the exposition.

We now use the tiling construction of Section \ref{section: adding one dimension}. Let $M$ be a positive integer.
Since $X$ is assumed to have the marker property, we can find an integer $L\geq M$ and a continuous function $\phi:X\to [0,1]$ so that
\begin{itemize}
 \item If $\phi(x)>0$ at some $x\in X$, then $\phi(T^nx) = 0$ for all non-zero $n\in \mathbb{Z}^k$
  with $|n|<M$.
 \item For any $x\in X$ there exists $n\in \mathbb{Z}^k$ satisfying $|n|<L$ and $\phi(T^n x)=1$.
\end{itemize}
For each $x\in X$ let
$\mathbb{R}^{k+1} = \bigcup_{n\in \mathbb{Z}^k}V(x,n)$ be the Voronoi decomposition associated with
the set  $\{(n,1/\phi(T^nx))|\, n\in \mathbb{Z}^k\}$.
We choose a real number $H\geq (L+\sqrt{k})^2$ and set $W(x,n) = \pi_{\R^k}\left(V(x,n)\cap (\mathbb{R}^k\times \{-H\})\right)$.
These form a tiling of $\mathbb{R}^k$.

We choose $s>1$ satisfying
\begin{equation} \label{eq: choice of s}
  \frac{1}{2^{k+1}}-\eta < \frac{1}{2(1+s^k)}.
\end{equation}
Choosing the above $M$ sufficiently large, we can assume
(by Lemma \ref{lemma: Voronoi tiling} (4)) that for any $x\in X$, $n\in \mathbb{Z}^k$ and $a\in \mathbb{R}^k$
\begin{equation}  \label{eq: choice of M w.r.t. s}
   (a,-sH)\in V(x,n) \Longrightarrow B_{3\sqrt{k}N}(a/s + (1-1/s)n) \subset W(x,n).
\end{equation}

Now we have completed the setup for the proof of Proposition \ref{prop: main proposition for embedding}.
Before going further, we explain its strategy here.
The map $f=(f_1,f_2):X\to K^2$ has two components $f_1$ and $f_2$.
We first construct a perturbation $g_1$ of $f_1$ and next a perturbation $g_2$ of $f_2$.
These functions $g_1$ and $g_2$ play different roles.
Take $x\in X$.
We try to \textit{encode} the tiling
$\mathbb{R}^k = \bigcup_{n\in \mathbb{Z}^k} W(x,n)$ by  the value of $I_{g_1}(x) = (g_1(T^nx))_{n\in \mathbb{Z}^k}$.
If all the (non-empty) tiles $W(x,n)$ are sufficiently large, then this encoding can be done almost without error.
Such a good situation could be taken for granted in Section \ref{section: proof of embedding and symbolic system}; but in the current context this is not so ---
some tiles will be small. We cannot encode such small tiles
by $I_{g_1}(x)$, and this is the main difficulty we need to overcome in this proof.
Lemma \ref{lemma: most tiles are large} implies that such bad tiles are asymptotically negligible.
But this is not enough here.

Our argument goes as follows.
We give up on encoding all the information of the tiling $\bigcup_{n\in \mathbb{Z}^k} W(x,n)$.
Instead we construct a \textbf{``pseudo-tiling''} of $\mathbb{R}^k$ from the value of $I_{g_1}(x)$:
\begin{equation*}
  \begin{CD}
   \text{tiling $(W(x,n))_{n\in \mathbb{Z}^k}$}\\
   @V{\text{encode}}VV\\
   I_{g_1}(x) \in K^{\mathbb{Z}^k}\\
@V{\text{decode}}VV\\
   \text{pseudo-tiling $\mathcal{W} =(\mathcal{W}_n)_{n\in \mathbb{Z}^k}$}.
  \end{CD}
\end{equation*}
The pseudo-tiling $\mathcal{W}$ consists of
$\ell^\infty$-functions $\mathcal{W}_n\in \ell^\infty(\mathbb{Z}^k)$.
When $W(x,n)$ is sufficiently large, the function $\mathcal{W}_n$ is approximately equal to
the characteristic function of $W(x,n)\cap \mathbb{Z}^k$.
The definition of $\mathcal{W}$ will be given later.

Next we construct a perturbation $g_2(x)$ of $f_2(x)$ by using the pseudo-tiling associated with $x$.
Then the map $g= (g_1,g_2):X\to K^2$ has been constructed.
Take two points $x$ and $y$ in $X$, and suppose $(I_{g_1}(x),I_{g_2}(x))= (I_{g_1}(y), I_{g_2}(y))$.
The first equation $I_{g_1}(x) = I_{g_1}(y)$ implies that the pseudo-tilings associated with $x$ and $y$ are equal.
So $I_{g_2}(x)$ and $I_{g_2}(y)$ are constructed from the same pseudo-tiling.
Using this additional information, we can conclude $d(x,y) < \varepsilon$ from the equation $I_{g_2}(x)=I_{g_2}(y)$.
This last step is analogous to the situation of Proposition
\ref{prop: main proposition for embedding of zero dimensional extension}:
in that case we also had two equalities $I_g(x)=I_g(y)$ and $\pi(x)=\pi(y)$.
The equation $\pi(x)=\pi(y)$ implied that the Voronoi tilings associated with $x$ and $y$ are equal, and then
we deduced $d(x,y)<\varepsilon$ from $I_g(x)=I_g(y)$.

\subsection{Construction of \texorpdfstring{$g_1$}{g1} and encoding the tiling} \label{subsection: encoding}

In this subsection we construct a perturbation $g_1$ of $f_1$.
We choose
an $\varepsilon$-embedding $\pi:(X,d_{[N]})\to P$ where $P$ is a simplicial complex of
dimension $\widim_\varepsilon(X,d_{[N]})$.
Replacing $P$ with a sufficiently fine subdivision, we can apply Lemma \ref{lemma: approximation by linear map}
to $P$ and find a linear map $\tilde{f}_1:P \to K^{[N]}$ satisfying
\[ \norm{\tilde{f}_1(\pi(x))-I_{f_1}(x)|_{[N]}}_\infty
   \stackrel{\mathrm{def}}{=} \max_{n\in [N]}|\tilde{f}_1(\pi(x))_n-f_1(T^nx)| < \delta
   \quad (\forall x\in X).\]
Set $P' = P\times \{n\in \mathbb{Z}^k|\, |n|< L+\sqrt{k}N\}$, and consider the map
\begin{equation} \label{eq: preliminary map of F}
   P' \to K^{[N]}, \quad (x,n) \mapsto \tilde{f}_1(x).
\end{equation}
Note $\dim P' = \dim P = \widim_\varepsilon (X,d_{[N]}) < D N^k/2^{k+1}$.
We perturb the map (\ref{eq: preliminary map of F})
using Lemma \ref{lemma: simplicial complex lemma for embedding} (applied with $V=\mathbb{R}^D$ and $n=N/2$), and
construct a linear map $F:P'\to K^{[N]}$ satisfying the following two conditions\footnote{Recall that $N$ is even.}. Note that the map $F$ has two variables $x\in P$ and $n\in \mathbb{Z}^k$;
the second variable $n$ of $F$ will be used for encoding the positions of the Voronoi centers.

\begin{enumerate}[label=(\theequation)]
\refstepcounter{equation}
\item\label{condition: F (1)}
For all $x\in X$ and $n\in \mathbb{Z}^k$ with $|n|<L+\sqrt{k}N$
\[
\norm{F(\pi(x),n)-I_{f_1}(x)|_{[N]}}_{\infty} < \delta.\\
\]
\refstepcounter{equation}
\item\label{condition: F (2)}
 If $x,y\in P'$ and $a,b\in [N/2+1]$ satisfy
\[
F(x)|_{a+[N/2]} = F(y)|_{b+[N/2]},
\]
then $x=y$ and $a=b$.
\end{enumerate}

Take $x\in X$. We define $g_1(x)\in K$ as follows.
We take a cut-off function $\alpha:[0,\infty)\to [0,1]$ satisfying
  $\alpha(0) =0$ and $\alpha(t) = 1$ for $t\geq 1$.
If $0\in \partial W(x,n)$
 for some $n\in \mathbb{Z}^k$, then we set $g_1(x) = f_1(x)$.
Otherwise there uniquely exists $n\in \mathbb{Z}^k$ satisfying
$0\in \mathrm{int}W(x,n)$.
Taking $a\in \mathbb{Z}^k$ with $a\equiv n \pmod N$ and $0\in a+[N]$,
we set
\begin{multline*}
 g_1(x) = \left\{1-\alpha(\dist(0,\partial W(x,n)))\right\}f_1(x) \\
   + \alpha(\dist(0,\partial W(x,n)))F(\pi(T^ax),n-a)_{-a} .
\end{multline*}
Note that $0\in \mathrm{Int}\,W(x,n)$ implies $|n|<L+\sqrt{k}$ by Lemma \ref{lemma: Voronoi tiling} (3).
Hence $|n-a| < L + \sqrt{k}N$ and
 the term $F(\pi(T^ax),n-a)_{-a}$ is well-defined.
The map $g_1:X\to K$ is continuous since $W(x,n)$ depends continuously on $x$.
From Condition~\ref{condition: F (1)}, the function~$g_1$ satisfies $|g_1(x)-f_1(x)| < \delta$ because
\[ |F(\pi(T^ax),n-a)_{-a}-f_1(x)| = |F(\pi(T^ax),n-a)_{-a}-I_{f_1}(T^ax)_{-a}| < \delta.\]
We will need the following formula of $I_{g_1}(x)$ later.
The proof is the same as Claim \ref{claim: formula of I_g for small entropy factor}
in Section \ref{section: small entropy factors}.
\begin{claim} \label{claim: formula of I_g_1 for embedding}
Let $a,n\in \mathbb{Z}^k$ such that $a\equiv n \pmod N$ and
$a+[N]\subset \mathrm{int}_1 W(x,n)$. Then
\[  I_{g_1}(x)|_{a+[N]} = F(\pi(T^a x),n-a).\]
\end{claim}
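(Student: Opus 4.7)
The plan is to apply the definition of $g_1$ pointwise at each translate $T^m x$ for $m \in a+[N]$, exploiting the $\mathbb{Z}^k$-equivariance of the Voronoi tiling construction noted in Section~\ref{section: adding one dimension}.

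First, I would fix an arbitrary $m \in a+[N]$. By the hypothesis $a+[N]\subset \mathrm{int}_1 W(x,n)$, we have $m\in \mathrm{int}_1 W(x,n)$, so $\dist(m,\partial W(x,n))\geq 1$. The equivariance identity $W(T^m x,n-m)=-m+W(x,n)$ then gives $0\in \mathrm{int}_1 W(T^m x,n-m)$, so in particular $\dist(0,\partial W(T^m x,n-m))\geq 1$ and the cut-off factor
\[ \alpha\bigl(\dist(0,\partial W(T^m x,n-m))\bigr) = 1. \]
Hence in the definition of $g_1(T^m x)$ the $f_1$-term drops out.

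Next, I would identify the integer $a'$ that plays the role of ``$a$'' for the point $T^m x$. Taking $a' = a-m$, we have $a' \equiv n-m \pmod N$ (since $a \equiv n \pmod N$) and $0 \in a'+[N]$ (since $m \in a+[N]$), so $a'$ is the admissible choice dictated by the definition of $g_1$. Applying that definition gives
\[ g_1(T^m x) \;=\; F\bigl(\pi(T^{a-m}(T^m x)),\,(n-m)-(a-m)\bigr)_{-(a-m)} \;=\; F(\pi(T^a x),\, n-a)_{m-a}. \]

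Finally, as $m$ ranges over $a+[N]$, the index $m-a$ ranges over $[N]$, so assembling these coordinates yields $I_{g_1}(x)|_{a+[N]} = F(\pi(T^a x),n-a)$ as required. There is no real obstacle here: the only subtlety is keeping straight which choice of representative (``$a$'' for the basepoint $x$, versus ``$a-m$'' for the shifted basepoint $T^m x$) realizes the $[N]$-block containing the origin, and that the strict interior hypothesis $\mathrm{int}_1$ is exactly what is needed to push $\alpha$ to the value $1$ so that the pure ``$F$-term'' of $g_1$ is selected.
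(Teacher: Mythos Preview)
Your proof is correct and follows essentially the same approach the paper intends: the paper simply says the proof is the same as that of Claim~\ref{claim: formula of I_g for small entropy factor}, and your argument is exactly that proof adapted to the present setting (with the additional second argument $n-a$ of $F$ carried along). The only extra detail you make explicit---that $\mathrm{int}_1$ forces the cut-off $\alpha$ to equal $1$---is implicit in the referenced claim and is handled correctly.
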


\subsection{Decoding and the construction of pseudo-tiling}

For $n\in \mathbb{Z}^k$ with $|n|<L+\sqrt{k}N$ we define $Q_n\subset K^{[N]}$
as the set of $F(x,n)$ $(x\in P)$.
These $Q_n$ can be thought as the \textit{decoder} of the encoding $I_{g_1}$.
From Condition~\ref{condition: F (2)}, for $m,n\in \mathbb{Z}^k$ with $|m|,|n|<L+\sqrt{k}N$
and $a,b\in [N/2+1]$ we have
\[ Q_m|_{a+[N/2]}\cap Q_n|_{b+[N/2]}=\emptyset \quad \text{if $(m,a)\neq (n,b)$}.\]
We choose $\tau>0$ so that
\begin{equation} \label{eq: condition on tau}
    \tau < \min_{(m,a)\neq(n,b)}\dist(Q_m|_{a+[N/2]},Q_n|_{b+[N/2]})
\end{equation}
where the minimum is taken over all pairs of distinct $(m,a)$ and $(n,b)$ in $\mathbb{Z}^k\times [N/2+1]$ with $|m|,|n|<L+\sqrt{k}N$, and $\dist(\cdot,\cdot)$ is the Euclidean distance.

Let $1_{[N]} \in \ell^\infty(\mathbb{Z}^k)$ be the characteristic function of $[N]=\{0,1,\dots,N-1\}^k$.
We choose a cut-off function $\beta:[0,\infty)\to [0,1]$ such that $\beta(0)=1$ and
$\beta(t)=0$ for $t\geq \tau$.
Take a point $\omega\in K^{\mathbb{Z}^k}$.
For $n\in \mathbb{Z}^k$
we define an $\ell^\infty$-function $\mathcal{W}_n:\mathbb{Z}^k \to [0,1]$ by
\[ \mathcal{W}_n(t) = \min\left(1, \sum_{|a-n|<L+\sqrt{k}N}\beta(\dist(\omega|_{a+[N]}, Q_{n-a}))1_{[N]}(t-a)\right).\]

The function $\mathcal{W}_n$ is supported in $B_{L+\sqrt{k}N}(n+[N])$.
We set $\mathcal{W}(\omega) = (\mathcal{W}_n)_{n\in \mathbb{Z}^k}$.
This is the \textbf{pseudo-tiling} associated with $\omega$.
We sometimes denote $\mathcal{W}_n$ by $\mathcal{W}^{\omega}_n$.
Now the map
\[ K^{\mathbb{Z}^k}\ni \omega\longmapsto \mathcal{W}(\omega) \in (\ell^\infty(\mathbb{Z}^k))^{\mathbb{Z}^k}\]
is continuous.
It is also equivariant in the following sense:
Let $m\in \mathbb{Z}^k$, and take $\omega'\in K^{\mathbb{Z}^k}$ with $\omega'_n = \omega_{n+m}$.
Then $\mathcal{W}^{\omega'}_n(t) = \mathcal{W}^\omega_{n+m}(t+m)$.
The meaning of the terminology ``pseudo-tiling'' is clarified by the next lemma.
\begin{lemma} \label{lemma: pseudo-tiling}
Let $x\in X$ and set $(\mathcal{W}_n)_{n\in \mathbb{Z}^k}= \mathcal{W}(I_{g_1}(x))$ .
\begin{enumerate}
\item
Let $n\in \mathbb{Z}^k$.
If $m\in \mathbb{Z}^k$ satisfies $B_{2\sqrt{k}N}(m)  \subset W(x,n)$
then
\[ \mathcal{W}_n(m) =1 \qquad\text{and}\qquad \mathcal{W}_{n'}(m) = 0 \>(\forall n'\neq n).\]

\item
There exists $n\in \mathbb{Z}^k$ such that $\mathcal{W}_n(m)=1$ and $\mathcal{W}_{n'}(m)=0$
for every $m\in B_{\sqrt{k}N}((1-1/s)n)\cap \mathbb{Z}^k$, $n'\neq n$.
Here $s>1$ is the positive constant introduced in Subsection \ref{subsection: setting and rough idea}.
\end{enumerate}
\end{lemma}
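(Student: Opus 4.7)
The plan is to verify both parts by direct computation against the definition of $\mathcal{W}_{n''}(m)$, with part~(2) reducing immediately to part~(1). For part~(2), Lemma~\ref{lemma: Voronoi tiling}(4) in the form of~(\ref{eq: choice of M w.r.t. s}) applied with $a=0$ guarantees that the unique $n$ with $(0,-sH)\in V(x,n)$ satisfies $B_{3\sqrt{k}N}((1-1/s)n)\subset W(x,n)$; every $m\in B_{\sqrt{k}N}((1-1/s)n)\cap\mathbb{Z}^k$ then has $B_{2\sqrt{k}N}(m)\subset W(x,n)$, so part~(1) applies.

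For part~(1), to get $\mathcal{W}_n(m)=1$, let $a$ be the unique element of $m-[N]$ with $a\equiv n\pmod N$. Since $|p-m|\le 2\sqrt{k}(N-1)<2\sqrt{k}N$ for every $p\in a+[N]$, the hypothesis $B_{2\sqrt{k}N}(m)\subset W(x,n)$ forces $a+[N]\subset\mathrm{int}_1 W(x,n)$. Claim~\ref{claim: formula of I_g_1 for embedding} then yields $I_{g_1}(x)|_{a+[N]}=F(\pi(T^ax),n-a)\in Q_{n-a}$, so the $a'=a$ summand in the definition contributes $\beta(0)\cdot 1=1$, and the outer minimum with $1$ gives $\mathcal{W}_n(m)=1$.

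For $n'\neq n$ I must show every term of $\mathcal{W}_{n'}(m)$ vanishes, i.e.\ $\dist(I_{g_1}(x)|_{a'+[N]},Q_{n'-a'})\geq \tau$ for each $a'\in m-[N]$ with $|a'-n'|<L+\sqrt{k}N$. When $a'\equiv n\pmod N$ (so $a'=a$), the same formula applies and~(\ref{eq: condition on tau}) with $(n-a,0)\neq(n'-a,0)$ yields the bound. The crux is $a'\not\equiv n\pmod N$: here Claim~\ref{claim: formula of I_g_1 for embedding} does not describe the full cube $a'+[N]$, so I plan to pass to a sub-cube on which a similar formula does hold. Coordinate-wise, let $\Delta\in(-N/2,N/2]^k$ be the unique integer vector with $\Delta\equiv n-a'\pmod N$, and set $a^*:=a'+\Delta$; then $a^*\equiv n\pmod N$ and $\Delta\neq 0$. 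A routine coordinate check produces $c\in[N/2+1]^k$ with $c':=c-\Delta\in[N/2+1]^k$ and $a^*+c+[N/2]=a'+c'+[N/2]\subset(a^*+[N])\cap(a'+[N])$. For every $p$ in this sub-cube, the unique element of $p-[N]$ congruent to $n\pmod N$ is $a^*$, and $p\in\mathrm{int}_1 W(x,n)$, so the construction of $g_1$ gives $g_1(T^px)=F(\pi(T^{a^*}x),n-a^*)_{p-a^*}$. Hence $I_{g_1}(x)|_{a'+c'+[N/2]}\in Q_{n-a^*}|_{c+[N/2]}$. Since $\Delta\neq 0$ forces $c\neq c'$, the pairs $(n-a^*,c)$ and $(n'-a',c')$ are distinct; and $p\in W(x,n)$ combined with $|p-a^*|\le\sqrt{k}(N-1)$ gives $|n-a^*|<L+\sqrt{k}N$, so~(\ref{eq: condition on tau}) yields $\dist(Q_{n-a^*}|_{c+[N/2]},Q_{n'-a'}|_{c'+[N/2]})\geq\tau$. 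Restriction to a sub-cube only decreases the $\ell^\infty$-distance, so $\dist(I_{g_1}(x)|_{a'+[N]},Q_{n'-a'})\geq\tau$ as required.

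The main obstacle is the Case~B bookkeeping: locating the sub-cube $a^*+c+[N/2]$ inside $a'+[N]$ on which the formula for $g_1$ holds uniformly, verifying that the shifted index pairs $(n-a^*,c)$ and $(n'-a',c')$ are genuinely distinct (which requires $c\neq c'$, not merely $n\neq n'$), and checking the admissibility bound $|n-a^*|<L+\sqrt{k}N$ so that~(\ref{eq: condition on tau}) actually applies.
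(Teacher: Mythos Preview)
Your argument follows the same path as the paper's: locate a half-sub-cube common to $a'+[N]$ and a translate $a^*+[N]$ with $a^*\equiv n\pmod N$, evaluate $I_{g_1}(x)$ there via Claim~\ref{claim: formula of I_g_1 for embedding}, and invoke the $\tau$-separation~(\ref{eq: condition on tau}). There is one sign slip to fix: since $a^*=a'+\Delta$, the identity $a^*+c=a'+c'$ forces $c'=c+\Delta$, not $c-\Delta$; with this correction your coordinate check (take $c_i=0$ when $\Delta_i\geq 0$ and $c_i=-\Delta_i$ when $\Delta_i<0$) and the remainder of the argument go through unchanged. The paper runs your two cases in one stroke---it deduces from~(\ref{eq: condition on tau}) simultaneously that the two half-cube offsets agree and that $n'=n$, so no separate handling of $a'\equiv n\pmod N$ is needed.
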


\noindent
Roughly speaking, property~(1) of the lemma says that the function $\mathcal{W}_n$ looks like the characteristic function of $W(x,n)\cap \mathbb{Z}^k$
for ``nice'' tiles $W(x,n)$.

\begin{proof}
(1)
Take $a\in \mathbb{Z}^k$ satisfying $a\equiv n\pmod N$ and $m\in a+[N]$.
The $1$-neighborhood of $a+[N]$ is contained in $W(x,n)$.
Hence Claim~\ref{claim: formula of I_g_1 for embedding} implies
\[ I_{g_1}(x)|_{a+[N]} = F(\pi(T^ax),n-a)\in Q_{n-a},\]
which implies that $\mathcal{W}_n(m)=1$.

Next we prove $\mathcal{W}_{n'}(m)=0$ for $n'\neq n$ by showing
\[
\dist(I_{g_1}(x)|_{b+[N]},Q_{n'-b})\geq \tau
\]
for all $b\in \mathbb{Z}^k$ satisfying
$m\in b+[N]$ and $|n'-b|<L+\sqrt{k}N$.
We choose $c \in \mathbb{Z}^k$ satisfying $c\equiv n\pmod N$ and $b\in c+[N]$.
We define $b',c'\in \mathbb{Z}^k$ by
\begin{align*}
   b'_i &= b_i,\quad c'_i = c_i & \text{if $b_i-c_i< N/2$},\\
   b'_i &= c'_i = c_i+N & \text{if $b_i-c_i\geq N/2$}.
\end{align*}
Now $b'-c'\in [N/2]$, $b'-b\in [N/2+1]$,
$c'\equiv n\pmod N$ and
the $1$-neighborhood of $c'+[N]$ is contained in $B_{2\sqrt{k}N}(m)\subset W(x,n)$.
So $I_{g_1}(x)|_{c'+[N]} = F(\pi(T^{c'}x),n-c')$
by Claim \ref{claim: formula of I_g_1 for embedding}. Then
\begin{equation*}
  \begin{split}
   (I_{g_1}(x)|_{b+[N]})|_{b'-b+[N/2]} &= I_{g_1}(x)|_{b'+[N/2]} \\
   &= (I_{g_1}(x)|_{c'+[N]})|_{b'-c'+[N/2]} \\
   &= F(\pi(T^{c'}x),n-c')|_{b'-c'+[N/2]}.
  \end{split}
\end{equation*}
This is contained in $Q_{n-c'}|_{b'-c'+[N/2]}$.
Suppose \[\dist(I_{g_1}(x)|_{b+[N]},Q_{n'-b})<\tau.\]
Then we have
$\dist(Q_{n-c'}|_{b'-c'+[N/2]},Q_{n'-b}|_{b'-b+[N/2]})<\tau$.
By condition~(\ref{eq: condition on tau}) on $\tau$ it follows that
$n-c'=n'-b$ and $b'-c'=b'-b$, hence $c'=b$ and $n'=n$.

(2) Take $n\in \mathbb{Z}^k$ with $(0,-sH)\in V(x,n)$.
From condition~(\ref{eq: choice of M w.r.t. s}),
\[ B_{3\sqrt{k}N}((1-1/s)n)\subset W(x,n). \]
Then for $m\in B_{\sqrt{k}N}((1-1/s)n)\cap \mathbb{Z}^k$ we have
\[B_{2\sqrt{k}N}(m)\subset
B_{3\sqrt{k}N}((1-1/s)n)\subset W(x,n).\]
By~(1) above we get
$\mathcal{W}_n(m)=1$ and $\mathcal{W}_{n'}(m)=0$ for $n'\neq n$.
\end{proof}

\subsection{Construction of \texorpdfstring{$g_2$}{g2} and the proof of Proposition \ref{prop: main proposition for embedding}}

In this subsection we construct a perturbation $g_2$ of $f_2$ and prove Proposition~\ref{prop: main proposition
for embedding}.
For $u=(u_1,\dots,u_k)\in \mathbb{R}^k$ we set
$\lceil u\rceil = (\lceil u_1\rceil,\dots,\lceil u_k\rceil)\in \mathbb{Z}^k$
($\lceil u_i\rceil$ is the smallest integer not smaller than $u_i$).
Set $N' = \lceil sN\rceil$.
For each $n\in \mathbb{Z}^k$ we consider the distance
$d_{[N]\cup (\lceil (1-s)n\rceil +[N'])}$ on $X$.
Although this looks complicated, its geometric meaning is clear:
Consider the projection from $\mathbb{R}^k\times \{-H\}$ to $\mathbb{R}^k\times \{-sH\}$ with respect to the
center $(n,0)$. Then $(\lceil (1-s)n\rceil +[N'])\times \{-sH\}$
is approximately equal to the image of $[N]\times \{-H\}$ under this projection.
So this is a version of our trick of going down to $\mathbb{R}^k\times \{-sH\}$ from $\mathbb{R}^k \times \{-H\}$.

Using (\ref{eq: fix of N for embedding}) and (\ref{eq: choice of s}), we have
\begin{equation*}
  \begin{split}
   \widim_\varepsilon(X,d_{[N]\cup(\lceil (1-s)n\rceil +[N'])}) &\leq
   \widim_\varepsilon(X,d_{[N]}) +\widim_\varepsilon(X,d_{[N']}) \\
   &\hspace{-0.6in}< D\left(\frac{1}{2^{k+1}}-\eta\right) N^k + D\left(\frac{1}{2^{k+1}}-\eta\right)(N'-1)^k \\
   &\hspace{-0.6in}\leq  D\left(\frac{1}{2^{k+1}}-\eta\right)(N^k + s^k N^k) < \frac{DN^k}{2},
  \end{split}
\end{equation*}
so there exists an $\varepsilon$-embedding
\[\pi_n:(X,d_{[N]\cup (\lceil (1-s)n\rceil +[N'])})\to R_n\]
where $R_n$ is a simplicial complex of dimension~$<D N^k/2$.
Let $R$ be the disjoint union of $R_n$ over $|n|<L+3\sqrt{k}N$.
By Lemmas \ref{lemma: approximation by linear map} and \ref{lemma: embedding of simplicial complex}
we can find a linear embedding $G:R\to K^{[N]}$ satisfying
\[ \norm{G(\pi_n(x))-I_{f_2}(x)|_{[N]}}_\infty < \delta \quad
   (x\in X, |n|<L+3\sqrt{k}N).\]

We define  a continuous map $g_2:X\to K$ as follows.
For a real number $t$ we set $\langle t\rangle = \max(0,\min(1,t))\in [0,1]$, and for
$u =(u_1,\dots,u_D)\in \mathbb{R}^D$ we set $\langle u\rangle = (\langle u_1\rangle,\dots,\langle u_D\rangle)\in [0,1]^D =K$.
For each $n\in \mathbb{Z}^k$ we take $a_n \in \mathbb{Z}^k$ satisfying $a_n\equiv n\pmod N$ and $0\in a_n+[N]$.
Let $x\in X$. Let $(\mathcal{W}_n)_{n\in \mathbb{Z}^k}=\mathcal{W}(I_{g_1}(x))$ be the pseudo-tiling
associated with $I_{g_1}(x)\in K^{\mathbb{Z}^k}$.
We define $A(x)$ as the set of $n\in \mathbb{Z}^k$ with $\mathcal{W}_n(0)>0$.
Since $\mathcal{W}_n$ is supported in $B_{L+\sqrt{k}N}(n+[N])$,
every $n\in A(x)$ satisfies $|n|< L+2\sqrt{k}N$.
We set
\[ g_2(x) = \left\langle f_2(x)+
\frac{\sum_{n\in A(x)}\mathcal{W}_n(0)\left(G(\pi_{n-a_n}(T^{a_n}x))_{-a_n}-f_2(x)\right)}
{\max\left(1,\sum_{n\in A(x)}\mathcal{W}_n(0)\right)} \right\rangle, \]
where the term $G(\pi_{n-a_n}(T^{a_n}x))$ is well-defined because
\[|n-a_n| < L+2\sqrt{k}N+\sqrt{k}N = L+ 3\sqrt{k}N.\]
This satisfies
$|g_2(x)-f_2(x)|<\delta$ because
\begin{equation*}
  \begin{split}
   |G(\pi_{n-a_n}(T^{a_n}x))_{-a_n}-f_2(x)| &= |G(\pi_{n-a_n}(T^{a_n}x))_{-a_n}-I_{f_2}(T^{a_n}x)_{-a_n}| \\
   &\leq \norm{G(\pi_{n-a_n}(T^{a_n}x))-I_{f_2}(T^{a_n}x)|_{[N]}}_\infty <\delta.
  \end{split}
\end{equation*}

\begin{claim} \label{claim: formula of I_g_2 for embedding}
Let $x\in X$ and $a,n\in \mathbb{Z}^k$ with $a\equiv n\pmod N$.
Suppose the pseudo-tiling $(\mathcal{W}_n)_{n\in \mathbb{Z}^k}=\mathcal{W}(I_{g_1}(x))$
satisfies $\mathcal{W}_n=1$ and $\mathcal{W}_{n'}=0$ $(\forall n'\neq n)$ over $a+[N]$. Then
\[ I_{g_2}(x)|_{a+[N]} = G(\pi_{n-a}(T^a x)).\]
\end{claim}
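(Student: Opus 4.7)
The plan is to compute $g_2(T^m x)$ for each $m\in a+[N]$ and show it equals $G(\pi_{n-a}(T^a x))_{m-a}$; then, as $m$ ranges over $a+[N]$ and $m-a$ over $[N]$, this gives the asserted identity $I_{g_2}(x)|_{a+[N]}=G(\pi_{n-a}(T^a x))$.

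The first step is to translate the hypothesis on $\mathcal{W}(I_{g_1}(x))$ into information about $\mathcal{W}(I_{g_1}(T^m x))$ so that the definition of $g_2(T^m x)$ simplifies. The $\mathbb{Z}^k$-equivariance of the pseudo-tiling construction (recorded in the paragraph preceding Lemma~\ref{lemma: pseudo-tiling}) gives
\[ \mathcal{W}^{I_{g_1}(T^m x)}_{n'}(0)=\mathcal{W}^{I_{g_1}(x)}_{n'+m}(m). \]
For $m\in a+[N]$ our hypothesis forces the right-hand side to equal $1$ precisely when $n'+m=n$ and to vanish otherwise. Hence $A(T^m x)=\{n-m\}$ with weight $\mathcal{W}^{I_{g_1}(T^m x)}_{n-m}(0)=1$, so the denominator $\max(1,\sum \mathcal{W}_{n'}(0))$ in the defining formula for $g_2$ equals $1$ and only one summand survives:
\[ g_2(T^m x)=\bigl\langle G\bigl(\pi_{(n-m)-a_{n-m}}(T^{a_{n-m}+m}x)\bigr)_{-a_{n-m}}\bigr\rangle. \]

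The second step is to identify the auxiliary integer $a_{n-m}$. By construction, $a_j$ is determined by $a_j\equiv j \pmod N$ together with $0\in a_j+[N]$; the latter condition is equivalent to $a_j\in -[N]=\{-(N-1),\dots,0\}^k$, and the coset of $j$ modulo $N$ meets $-[N]$ in exactly one point, so $a_j$ is unique. The element $a-m$ satisfies $a-m\equiv n-m \pmod N$ (because $a\equiv n\pmod N$) and lies in $-[N]$ (because $m\in a+[N]$ gives $m-a\in[N]$), so $a_{n-m}=a-m$. Substituting and using that $G$ takes values in $K=[0,1]^D$, so that the cut-off $\langle\cdot\rangle$ acts as the identity, yields $g_2(T^m x)=G(\pi_{n-a}(T^a x))_{m-a}$, as required.

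This proof is essentially bookkeeping and I do not expect any substantive obstacle. The only points demanding a moment of care are a correct application of the equivariance $\mathcal{W}^{I_{g_1}(T^m x)}_{n'}(0)=\mathcal{W}^{I_{g_1}(x)}_{n'+m}(m)$ at $t=0$, and the uniqueness argument pinning down $a_{n-m}=a-m$; both are immediate from the hypotheses and the construction.
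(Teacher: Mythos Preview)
Your proof is correct and follows essentially the same approach as the paper: both use the equivariance of the pseudo-tiling to reduce $A(T^m x)$ to the singleton $\{n-m\}$ with weight $1$, identify $a_{n-m}=a-m$ from the defining conditions, and then read off $g_2(T^m x)=G(\pi_{n-a}(T^a x))_{m-a}$. The only differences are cosmetic (the paper uses $b$ where you use $m$, and it computes $a_{n-b}$ before invoking equivariance rather than after).
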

\begin{proof}
Take $b\in a+[N]$. We have $a-b\equiv n-b \pmod N$ and $0\in (a-b)+[N]$.
Hence $a_{n-b} = a-b$.
Let $\mathcal{W}(I_{g_1}(T^b x)) = (\mathcal{W}'_m)_{m\in \mathbb{Z}^k}$.
Then $\mathcal{W}'_m(t) = \mathcal{W}_{m+b}(t+b)$ and
\begin{equation*}
   \mathcal{W}'_m(0) = \begin{cases}
                        1 & \quad (m=n-b)\\
                        0 & \quad (m\neq n-b).
                       \end{cases}
\end{equation*}
Therefore $A(T^b x) = \{n-b\}$ and
\begin{align*}
  g_2(T^b x) &= f_2(T^b x) + G(\pi_{n-b-a_{n-b}}(T^{a_{n-b}}T^b x))_{-a_{n-b}} - f_2(T^b x)\\
&  = G(\pi_{n-a}(T^a x))_{-a+b}.
\end{align*}
Hence $I_{g_2}(x)|_{a+[N]} = G(\pi_{n-a}(T^a x))$.
\end{proof}

The proof of Proposition \ref{prop: main proposition for embedding}
is completed by the next lemma.

\begin{lemma}
The map $I_{(g_1,g_2)}:X\to (K^2)^{\mathbb{Z}^k}$ is a $\delta$-embedding with respect to the distance $d$.
\end{lemma}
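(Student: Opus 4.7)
The plan is to verify that $I_{(g_1,g_2)}(x) = I_{(g_1,g_2)}(y)$ implies $d(x,y) < \varepsilon < \delta$. First, since the decoding map $\omega \mapsto \mathcal{W}(\omega)$ is purely a function of $\omega$, the equality $I_{g_1}(x) = I_{g_1}(y)$ forces the pseudo-tilings to coincide; write $(\mathcal{W}_n)_{n \in \mathbb{Z}^k} := \mathcal{W}(I_{g_1}(x)) = \mathcal{W}(I_{g_1}(y))$. Applying Lemma~\ref{lemma: pseudo-tiling}(2) to $x$ yields $n \in \mathbb{Z}^k$ (with $(0,-sH) \in V(x,n)$) on which $\mathcal{W}_n(m) = 1$ and $\mathcal{W}_{n'}(m) = 0$ for all $n' \neq n$ and $m \in B_{\sqrt{k}N}((1-1/s)n) \cap \mathbb{Z}^k$.

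Next, take $a \in \mathbb{Z}^k$ to be the unique vector with $a \equiv n \pmod{N}$ whose coordinates satisfy $a_i \in ((1-1/s)n_i - N,\, (1-1/s)n_i]$; uniqueness and existence follow from the fact that the residue class modulo $N$ has spacing $N$. A coordinate-wise bound $|a_i + j_i - (1-1/s)n_i| < N$ for $j \in [N]$ gives $a + [N] \subset B_{\sqrt{k}N}((1-1/s)n)$, so the pseudo-tiling is pure on $a + [N]$ in the sense required by Claim~\ref{claim: formula of I_g_2 for embedding}. Applied to both $x$ and $y$ (with their common pseudo-tiling), the claim then yields
\[
G(\pi_{n-a}(T^a x)) = I_{g_2}(x)|_{a+[N]} = I_{g_2}(y)|_{a+[N]} = G(\pi_{n-a}(T^a y)),
\]
and injectivity of the linear embedding $G$ gives $\pi_{n-a}(T^a x) = \pi_{n-a}(T^a y)$.

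Since $\pi_{n-a}$ is an $\varepsilon$-embedding with respect to $d_{[N] \cup (\lceil (1-s)(n-a)\rceil + [N'])}$, we obtain $d(T^{a+j}x,\, T^{a+j}y) < \varepsilon$ for every $j$ in this index set. It remains to verify that $-a$ lies there. Using that $n-a \in \mathbb{Z}^k$, the identity $\lceil(1-s)(n-a)\rceil = (n-a) - \lfloor s(n-a)\rfloor$ reduces the condition $-a \in \lceil(1-s)(n-a)\rceil + [N']$ to $\lfloor s(n-a)\rfloor - n \in [N']$. The lower bound $\lfloor s(n_i-a_i)\rfloor \geq n_i$ follows from $a_i \leq (1-1/s)n_i$, while the upper bound $\lfloor s(n_i-a_i)\rfloor \leq n_i + N' - 1$ follows from $a_i > (1-1/s)n_i - N$ combined with $N' \geq sN$. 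Taking $j = -a$ produces $d(x,y) < \varepsilon < \delta$, establishing the lemma.

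The main obstacle is the choice of $a$ in the second paragraph: it has to keep $a + [N]$ inside the pure region around $(1-1/s)n$ (so $a$ must be close to $(1-1/s)n$, hence potentially far from the origin) while simultaneously placing $-a$ inside the index set $[N] \cup (\lceil (1-s)(n-a)\rceil + [N'])$. The first piece $[N]$ alone would demand $-a \in [N]$, which is incompatible with the pure-region requirement as soon as $n$ is even moderately large. The payoff of the ``adding one dimension'' trick appears here: the second piece $\lceil(1-s)(n-a)\rceil + [N']$ is geometrically the central projection (from the auxiliary center $(n,0)$) of the $[N]$-cube at depth $-H$ down to the deeper slice $-sH$, and it is engineered precisely so that $-a$ lands inside it regardless of how large $n$ is.
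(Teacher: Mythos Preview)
Your proof is correct and follows essentially the same approach as the paper's. Your choice of $a$ (coordinates in $((1-1/s)n_i - N,\,(1-1/s)n_i]$) is exactly the paper's condition $(1-1/s)n \in a + [0,N)^k$ rewritten, and your arithmetic verification that $-a \in \lceil(1-s)(n-a)\rceil + [N']$ via the identity $\lceil(1-s)(n-a)\rceil = (n-a) - \lfloor s(n-a)\rfloor$ is equivalent to the paper's substitution $(1-1/s)n = a + t$ leading to $\lceil(1-s)(n-a)\rceil + a = -\lfloor st\rfloor \in -[N']$.
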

\begin{proof}
Suppose $(I_{g_1}(x),I_{g_2}(x)) = (I_{g_1}(y),I_{g_2}(y))$ for some $x,y\in X$.
Set \[(\mathcal{W}_n)_{n\in \mathbb{Z}^k}=\mathcal{W}(I_{g_1}(x))\ (=\mathcal{W}(I_{g_1}(y)).\]
From part~(2) of Lemma~\ref{lemma: pseudo-tiling} there exists $n\in \mathbb{Z}^k$ such that
$\mathcal{W}_n=1$ and $\mathcal{W}_{n'}=0$ $(n'\neq n)$ over $B_{\sqrt{k}N}((1-1/s)n)\cap \mathbb{Z}^k$.
Take $a\in \mathbb{Z}^k$ satisfying $a\equiv n \pmod N$ and $(1-1/s)n\in a+[0,N)^k$.
Then $a+[N]\subset B_{\sqrt{k}N}((1-1/s)n)$, and hence
$\mathcal{W}_n=1$ and $\mathcal{W}_{n'}=0$ $(n'\neq n)$ over $a+[N]$.
By Claim \ref{claim: formula of I_g_2 for embedding}
\[ I_{g_2}(x)|_{a+[N]} = G(\pi_{n-a}(T^a x)) = I_{g_2}(y)|_{a+[N]} = G(\pi_{n-a}(T^a y)).\]
Since $G$ is an embedding, we get $\pi_{n-a}(T^a x) = \pi_{n-a}(T^a y)$.
The map~$\pi_{n-a}$ is an $\varepsilon$-embedding with respect to
$d_{[N]\cup (\lceil (1-s)(n-a)\rceil + [N'])}$. Hence
\[ d_{\lceil (1-s)(n-a)\rceil + [N']}(T^a x,T^a y) = d_{\lceil (1-s)(n-a)\rceil + a+ [N']}(x,y) < \varepsilon.\]
Setting $(1-1/s)n=a+t$ with $t\in [0,N)^k$,
\begin{equation*}
  \begin{split}
   \lceil (1-s)(n-a)\rceil + a &= \lceil (1-s)n+(s-1)a + a\rceil = \lceil (1-s)n +sa\rceil \\
   &= \lceil -s t\rceil   \quad (\text{by $(s-1)n = sa+st$}) \\
   &= -\lfloor st\rfloor \in -[N'] \quad (\text{by $N' = \lceil sN\rceil$}).
  \end{split}
\end{equation*}
Therefore the origin is contained in $\lceil (1-s)(n-a)\rceil + a+ [N']$.
Thus $d(x,y)<\varepsilon<\delta$.
\end{proof}

\begin{remark}
By a little more careful argument, we can prove the following slightly stronger (but slightly more cumbersome to state) variant of Theorem~\ref{thm: embedding theorem for systems having marker property}:
Suppose $D_1$ and $D_2$ are positive numbers. Let $(X,\mathbb{Z}^k,T)$ be a dynamical system
having the marker property and
\[ \mdim(X) < \min\left(\frac{D_1}{2^{k+1}}, \frac{D_2}{4}\right).\]
Then for a dense $G_\delta$ subset of $f\in C(X,[0,1]^{D_1+D_2})$ the map
$I_f:X\to ([0,1]^{D_1+D_2})^{\mathbb{Z}^k}$ is an embedding.
\end{remark}

\vspace{0.5cm}

\address{Yonatan Gutman \endgraf
 Institute of Mathematics, Polish Academy of Sciences,
ul. \'{S}niadeckich~8, 00-656 Warszawa, Poland}

\textit{E-mail address}: \texttt{y.gutman@impan.pl}

\vspace{0.5cm}

\address{ Elon Lindenstrauss \endgraf
Einstein Institute of Mathematics, Hebrew University, Jerusalem 91904, Israel}

\textit{E-mail address}: \texttt{elon@math.huji.ac.il}

\vspace{0.5cm}

\address{ Masaki Tsukamoto \endgraf
Department of Mathematics, Kyoto University, Kyoto 606-8502, Japan}

\textit{E-mail address}: \texttt{tukamoto@math.kyoto-u.ac.jp}

\end{document}